\theoremstyle{definition}
\newtheorem{definition}{Definition}[section] 
\theoremstyle{plain}
\newtheorem{proposition}[definition]{Proposition}
\newtheorem{theorem}[definition]{Theorem}
\newtheorem{corollary}[definition]{Corollary}
\newtheorem*{maintheo}{Main Theorem}
\newtheorem*{theointro}{Theorem}
\theoremstyle{remark}
\newtheorem{remark}[definition]{Remark}
\newtheorem{example}[definition]{Example}
\newtheorem{question}[definition]{Question}
\newcommand{\C}{\mathbb{C}}
\newcommand{\R}{\mathbb{R}}
\newcommand{\Q}{\mathbb{Q}}
\newcommand{\Z}{\mathbb{Z}}
\newcommand{\N}{\mathbb{N}}
\newcommand{\F}{\mathbb{F}}
\newcommand{\NN}{\mathcal{N}}
\title{Gluing formulas for the $L^2$-Alexander torsions}
\author{Fathi Ben Aribi}
\address{Universit\'e de Gen\`eve, Section de math\'ematiques, 2-4 rue du Li\`evre, Case postale 64 1211 Gen\`eve 4, Suisse}
\email{fathi.benaribi@unige.ch}
\begin{document}

\renewcommand{\proofname}{Proof}

\subjclass[2010]{57M25; 57M27}
\keywords{$L^2$-torsion; $3$-manifolds; Dehn surgery}

\maketitle

\begin{abstract}
We prove a Torres-like formula for the $L^2$-Alexander torsions of links, as well as formulas for connected sums and cablings of links. Along the way we compute explicitly the $L^2$-Alexander torsions of torus links inside the three-sphere, the solid torus and the thickened torus.
\end{abstract}

\section{Introduction}

When one considers a knot invariant, one of the first questions to ask is ``how can the definition be extended to links?''.
The $L^2$-Alexander invariant of a knot is a knot invariant taking values in the class of maps on the positive real numbers up to multiplication by monomials. 
It was originally constructed by W. Li and W. Zhang in \cite{LZ06a} from a presentation of the knot group, as an infinite-dimensional version of Fox's construction for the Alexander polynomial (see \cite{fox}).
J. Dubois, S. Friedl and W. L\"uck then generalized this invariant in \cite{DFL} with the $L^2$-Alexander torsion
$T^{(2)}(M,\phi,\gamma)(t)$
 associated to a triplet $(M,\phi,\gamma)$ and a positive number $t$, where $M$ is a compact connected oriented $3$-manifold with empty or toroidal boundary, 
 $\phi\colon \pi_1(M) \to \Z$ is a group homomorphism (and can be seen as a $1$-cohomology class of $M$) 
 and $\gamma\colon \pi_1(M) \to G$ is a second homomorphism to a finitely presented group $G$ such that $\phi$ factors through $\gamma$.
One can see these $L^2$-Alexander torsions as infinite-dimensional versions of the Reidemeister torsions of $3$-manifolds.
In particular, the $L^2$-Alexander torsions associated to the exterior of a link in the three-sphere answer the previous question concerning the $L^2$-Alexander invariant of knots.

The $L^2$-Alexander torsions of links attract considerable interest for the reasons that they are continuous functions (see \cite{Liu}) whose values give the simplicial volume and the Thurston norm associated to 
the link exterior (see \cite{DFL,FL,Liu,LS99}).
Unfortunately, these torsions are difficult to compute exactly; thus, any method of computing them more efficiently for a given link is worthy of pursuit. 
For knots, the torsions reduce to the $L^2$-Alexander invariant of Li and Zhang when $\gamma$ is the identity. Exact computations have been made for torus knots in \cite{DW} and then for iterated torus knots in \cite{BA13}. More generally, the $L^2$-Alexander torsions of graph manifolds were computed explicitly (and reduced to the Thurston norm) in \cite{DFL, herrmannarxiv} and the ones of fibered manifolds were computed partially in \cite{DFL}.
In this article, we present various techniques to relate the $L^2$-Alexander torsions of links related by 
\begin{itemize}
\item deleting a component,
\item connected sum,
\item cabling,
\end{itemize}
and we compute explicitly the torsions associated to iterated torus links.

The Torres formula for the Alexander polynomial of links (see \cite{Torres}) is a useful tool to compare the polynomials of two links differing by one component (for example two-component links and knots). The Main Theorem of this article presents a Torres-like formula for the $L^2$-Alexander torsions of links.
One can see the $L^2$-Alexander torsions of links as functions 
$T^{(2)}_{L,(n_1, \ldots, n_{c})}(\gamma)(t)$
of a link $L$, a finite set of integers $n_i$ (one for each component of $L$), a homomorphism $\gamma$ and a positive variable $t$; these functions are considered up to multiplication by monomials (equality up to such multiplication is denoted $\dot{=}$).
\begin{maintheo}[Theorem \ref{thm surgery forget}]
Let $L = L_1 \cup \ldots \cup L_c$ be a $c$-component link and $L'$ the link obtained by removing the last component $L_c$. Let $G_L$ and $G_{L'}$ denote the fundamental groups of their exteriors and $Q\colon G_{L} \to G_{L'}$ the epimorphism induced by removing of the component $L_c$.
For all $t>0$, all integers $n_1, \ldots, n_{c-1}$ and all appropriate $\gamma\colon G_{L'} \to G$,
the corresponding $L^2$-Alexander torsions satisfy:
$$T^{(2)}_{L',(n_1, \ldots, n_{c-1})}(\gamma)(t)  \ \dot{=} \ 
\dfrac{T^{(2)}_{L,(n_1, \ldots, n_{c-1}, 0)}(\gamma \circ Q)(t)}
{\max(1,t)^{|\mathrm{lk}(L_1,L_c) n_1 + \ldots + \mathrm{lk}(L_{c-1},L_c) n_{c-1}|}}.$$
\end{maintheo}

The Main Theorem can be seen as a particular case of a general Dehn surgery formula for the $L^2$-Alexander torsions (stated in Propositions \ref{prop surgery} and \ref{prop surgery links}), since removing a component of the link $L$ is the same as gluing a solid torus canonically to the corresponding boundary component of the exterior of $L$.
Another particular case of Dehn surgery yields an interesting connection between $L^2$-Alexander torsions of the Whitehead link and the $L^2$-Alexander invariants of the twist knots (see Theorem \ref{thm surgery twist}).

Like many of the results of this article, the surgery formulas are consequences of a general gluing formula for the $L^2$-Alexander torsions. This formula is stated in Proposition \ref{prop mayer vietoris}, and can be seen as a result of an application of the multiplicativity of the classical $L^2$-torsion (see \cite[Theorem 3.35 (1)]{Luc02}).

As a second class of consequences of the gluing formula of  Proposition \ref{prop mayer vietoris}, we prove general formulas for computing the $L^2$-Alexander torsions of  connected sums of two links or  general multi-component cablings of  links, as summarized in the following Theorem (for simplicity $\gamma$ is assumed to be the identity and is therefore not written):
\begin{theointro}[Theorems \ref{thm_torsion_sum} and \ref{thm_torsion_cabling}]

\

\begin{enumerate}
\item The $L^2$-Alexander torsions are ``almost multiplicative'' under the connected sums of links: if $L''$ is a connected sum of a $(c+1)$-component link $L$ and a $(d+1)$-component link $L'$, then
$$
\dfrac{
T^{(2)}_{L'',(n_1, \ldots, n_{c+d+1})}(t) 
}{ \max(1,t)^{|n_{c+d+1}|}}
\ \dot{=} \ 
T^{(2)}_{L,(n_1, \ldots,n_c, n_{c+d+1})}(t)
\cdot
T^{(2)}_{L',(n_{c+1}, \ldots, n_{c+d}, n_{c+d+1})}(t)
.$$
\item The $L^2$-Alexander torsions satisfy a cabling formula: if $L$ is a $(c+1)$-component link, and $L'$ a $(ep,eq)$-cabling on the last component of $L$, then
$$
T^{(2)}_{L', (n_1, \ldots, n_{c+e})}(t) 
\ \dot{=} \ 
T^{(2)}_{L, (n_1, \ldots,n_c, p N)}(t) \cdot
\max(1,t)^{(e|p|-1)|\ell+q N|}, 
$$
where $N = n_{c+1} + \ldots + n_{c+e}$ and $\ell = \sum_{i=1}^c \mathrm{lk}(L_i,L_{c+1}) n_i.$
\end{enumerate}
\end{theointro}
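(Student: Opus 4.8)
The plan is to deduce both identities from the gluing formula of Proposition \ref{prop mayer vietoris} by writing the exterior of the new link as a union of the exterior(s) of the old link(s) and one elementary ``pattern'' piece, glued along a surface whose torsion I can compute by hand. In each case the three ingredients are: (i) a topological decomposition of the exterior, (ii) an identification of how the homomorphism $\phi$ (equivalently, the weight data) restricts to each piece and to the gluing surface, and (iii) the explicit torsion of the gluing surface and of the pattern piece. The homological algebra --- in particular the $L^2$-acyclicity and the vanishing of the homology-sequence correction --- is packaged once and for all in Proposition \ref{prop mayer vietoris}, so the work is concentrated in (ii) and (iii).

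For part (1), I would use the classical description of the connected-sum exterior as $X_{L''} = X_L \cup_A X_{L'}$, where the two pieces deformation retract onto the exteriors of $L$ and $L'$ and $A$ is a meridional annulus whose core is a meridian $\mu$ of the merged component; on fundamental groups this is the amalgamated product $G_{L''} = G_L \ast_{\langle \mu\rangle} G_{L'}$. I would then check that the homomorphism attached to the weights $(n_1,\dots,n_{c+d+1})$ restricts to the homomorphisms attached to $(n_1,\dots,n_c,n_{c+d+1})$ and $(n_{c+1},\dots,n_{c+d},n_{c+d+1})$ on the two pieces, the merged component carrying the same weight $n_{c+d+1}$ to both. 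Finally, since $A \simeq S^1$ with $\phi(\mu) = n_{c+d+1}$, its $L^2$-Alexander torsion reduces to a single Fuglede--Kadison determinant, namely the Mahler measure of $1 - t^{n_{c+d+1}}\mu$, which by Jensen's formula equals $\max(1,t)^{|n_{c+d+1}|}$ up to a monomial. Feeding these data into Proposition \ref{prop mayer vietoris}, where the torsion of the gluing surface enters as a correction, produces exactly the factor $\max(1,t)^{|n_{c+d+1}|}$ of the statement.

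For part (2), I would decompose the exterior of the cabled link as $X_{L'} = X_L \cup_{T^2} W$, where $T^2 = \partial N(L_{c+1})$ is the boundary torus of the cabled component and $W$ is the exterior, inside the solid torus $N(L_{c+1})$, of the $(ep,eq)$-torus-link pattern. Applying Proposition \ref{prop mayer vietoris} along $T^2$ and using that the $L^2$-Alexander torsion of the two-torus is trivial (its group $\Z^2$ being infinite amenable, so that the relevant Fuglede--Kadison determinants are $1$) gives $T^{(2)}_{L'} \doteq T^{(2)}_{L}\cdot T^{(2)}(W)$ with no denominator. The two inputs then have to be matched through the meridian--longitude framing of $T^2$: a meridian disk of $N(L_{c+1})$ meets the pattern in $ep$ points, so the companion meridian has $\phi$-value $p(n_{c+1}+\dots+n_{c+e}) = pN$, which identifies the companion factor as $T^{(2)}_{L,(n_1,\dots,n_c,pN)}$; dually, the companion longitude has $\phi$-value $\ell + qN$, with $\ell = \sum_{i=1}^c \mathrm{lk}(L_i,L_{c+1})n_i$ the contribution of the other components and $qN$ the meridional wrapping of the pattern. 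The remaining factor $T^{(2)}(W)$ is then supplied by the explicit computation of the $L^2$-Alexander torsion of torus links in the solid torus carried out earlier in the paper, which evaluates to $\max(1,t)^{(e|p|-1)|\ell+qN|}$ once these boundary data are inserted.

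The main obstacle in both parts is the bookkeeping of item (ii), not the homological algebra. In part (1) one must verify that the amalgamating $\Z$ is generated by the meridian and that it carries weight $n_{c+d+1}$ on both sides; this is routine. The genuinely delicate point is in part (2): the change of variables turning the weight datum $(n_{c+1},\dots,n_{c+e})$ on the $e$ cable components into the single weight $pN$ on the companion, combined with identifying $T^{(2)}(W)$ and checking that the linking and framing data assemble into the exponent $(e|p|-1)|\ell+qN|$. I expect this last computation --- pinning down the torus-link-in-solid-torus torsion and confirming that the multiplicity is exactly $e|p|-1$ and the $\phi$-class exactly $\ell + qN$ --- to be the technical heart of the argument.
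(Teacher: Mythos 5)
Your part (2) is essentially the paper's own proof of Theorem \ref{thm_torsion_cabling}: the same splitting of $M_{L'}$ into $M_L$ and the pattern exterior $W\cong M_{T(ep,eq)\cup H_v}$ along $T=\partial V(L_{c+1})$, the same use of the triviality of the torus torsion, the same weight bookkeeping ($pN$ on the companion meridian, $\ell$ on the axis of the pattern), and the same appeal to the explicit computation of Proposition \ref{prop_ep,eq_inside}. Nothing to add there.

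Part (1) takes a genuinely different route. The paper splits $M_{L''}$ along the two tori $\partial V(L_{c+1})$ and $\partial V(L'_{d+1})$ into \emph{three} pieces, $M_L$, $M_{L'}$ and the exterior of the keychain link $T(2,0)\cup H_v$; the correction factor $\max(1,t)^{|n_{c+d+1}|}$ is then the torsion of the keychain piece (Proposition \ref{torsion_keychain} with $e=2$), the two torus torsions being trivial. You instead split along the annulus $A$ in which the summing $2$-sphere meets $M_{L''}$, so the correction factor is carried by the gluing surface rather than by a third piece. This works and is more economical (two pieces, no keychain computation), at the cost of (a) invoking Proposition \ref{prop L2 torsion simple homotopy} to identify the torsions of the two ball-complement pieces with those of $M_L$ and $M_{L'}$ (they are simple homotopy equivalent, not equal, to the exteriors), and (b) computing the torsion of $A$ by hand. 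On (b) you have an inversion: $\max(1,t)^{|n_{c+d+1}|}$ is the Fuglede--Kadison determinant of the single differential of the circle's complex, but the torsion is that determinant raised to the power $(-1)^1$, so $T^{(2)}(A)\ \dot{=}\ \max(1,t)^{-|n_{c+d+1}|}$, consistent with Corollary \ref{torsion solid torus} since the annulus is simple homotopy equivalent to a solid torus. Because $T^{(2)}(A)$ sits in the \emph{denominator} of Proposition \ref{prop mayer vietoris}, it is this inverse that yields the factor $\max(1,t)^{+|n_{c+d+1}|}$ multiplying $T^{(2)}_{L}\cdot T^{(2)}_{L'}$; taken literally, the value you quote would put that factor on the wrong side of the identity. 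This is a bookkeeping slip, not a flaw in the route.
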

Formulas (1) and (2) are generalizations of Theorems 3.2 and 4.3 of \cite{BA13} to the case of links. 
Unlike \cite{BA13} where we used Fox calculus as the main tool, here we use methods of computation that rely on CW-complexes structures and the gluing formula  of Proposition \ref{prop mayer vietoris}. 
We hope that \cite{BA13} and the present article will provide $L^2$-Alexander enthusiasts with a wide array of methods of computations.

The proof of the previous Theorem uses the gluing formula of Proposition \ref{prop mayer vietoris} and the fact that in making a connected sum of two links or cabling one link 
one has to glue specific Seifert-fibered pieces to produce the corresponding link exteriors.
We are thus naturally led to explicitly compute all $L^2$-Alexander torsions of links whose exteriors are Seifert-fibered. This is the third and final class of applications of the gluing formula of  Proposition \ref{prop mayer vietoris} and they are interesting in their own right. These computations are summarized in the following Theorem (for simplicity $\gamma$ is assumed to be the identity and is therefore not written):

\begin{theointro}[All Propositions of Section \ref{sec:splicing}] 
Let $L$ be a link in $S^3$ with Seifert-fibered exterior. Then $L$ can be seen as a torus link  either in $S^3$, or in a solid torus or else in a thickened torus. Moreover:
\begin{itemize}
\item If $L=T(ep,eq)$, i.e. $L$ is an $e$-component torus link with $p,q$ coprime numbers,
 then
$$T^{(2)}_{L, (n_1, \ldots,n_e)}(t) \ \dot{=} \
\max(1,t)^{(e|pq| - |p| - |q|)|n_1 + \ldots +n_e|}.$$
\item If $L= T(ep,eq) \cup L_{e+1}$, i.e. $L$ is an $e$-component torus link in a solid torus (seen as the exterior of the unknot $L_{e+1}$),
 then
$$T^{(2)}_{L, (n_1, \ldots,n_e, n_{e+1})}(t) \ \dot{=} \
\max(1,t)^{(e|p|-1)|q(n_1 + \ldots +n_e)+ n_{e+1}|}.$$
\item If $L= T(ep,eq) \cup L_{e+1} \cup L_{e+2}$, i.e. $L$ is an $e$-component torus link in a thickened torus (seen as the exterior of the Hopf link $L_{e+1} \cup L_{e+2}$),
 then
$$ T^{(2)}_{L, (n_1, \ldots,n_e, n_{e+1}, n_{e+2})}(t) 
\ \dot{=} \
\max(1,t)^{e|pq(n_1 + \ldots +n_e) + p n_{e+1} + q n_{e+2}|}.
$$
\end{itemize}
\end{theointro}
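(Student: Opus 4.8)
The plan is to realise each of the three exteriors as a gluing of two Seifert pieces along annuli lying on the Heegaard torus of $S^3$, and then to invoke the gluing formula of Proposition \ref{prop mayer vietoris}. Write $S^3 = V_1 \cup_T V_2$ for the genus-one Heegaard splitting, so that $T = \partial V_1 = \partial V_2$ carries the $e$ parallel $(p,q)$-curves that form $T(ep,eq)$; these cut $T$ into $e$ complementary annuli $A_1, \dots, A_e$. Removing an open tubular neighbourhood of the link from $V_i$ leaves a piece $W_i \subseteq V_i$, and the first step is to check that $W_1 \cap W_2 = A_1 \sqcup \dots \sqcup A_e$ and that $W_i$ is homeomorphic to the solid torus $V_i$ when the core of $V_i$ is retained, whereas deleting that core (which happens precisely when $L_{e+1}$ or $L_{e+2}$ is one of the components) turns $W_i$ into a copy of $T^2 \times I$. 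In every case the exterior is then $W_1 \cup_{(A_1 \sqcup \dots \sqcup A_e)} W_2$.

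Next I would assemble the building-block torsions. A solid torus is homotopy equivalent to its core circle, so by the fundamental computation $\det_{\mathcal{N}(\Z)}(t^{\phi(g)} g - 1) = \max(1,t)^{|\phi(g)|}$ its $L^2$-Alexander torsion is $\dot{=} \max(1,t)^{|\phi(\text{core})|}$; the same formula, applied to the core of each annulus, computes $T^{(2)}(A_i)$. For the thickened-torus pieces the key input, which I would isolate as a preliminary lemma, is that $T^2 \times I$ has $L^2$-Alexander torsion $\dot{=} 1$ (the amenability of $\Z^2$ forces the relevant Fuglede--Kadison determinant to be trivial). It then remains to read $\phi$ off each core from three linking facts: two distinct $(p,q)$-curves link $pq$ times, while a $(p,q)$-curve links the core of $V_1$ exactly $q$ times and the core of $V_2$ exactly $p$ times. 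Setting $\phi(\mu_i) = n_i$ and $N = n_1 + \dots + n_e$, a representative computation gives $\phi(\text{core } A_i) = pqN + q\,n_{e+1} + p\,n_{e+2}$ together with $\phi(\text{core } V_1) = qN + n_{e+1}$ and $\phi(\text{core } V_2) = pN + n_{e+2}$, where the last $n$-terms are present only when the corresponding extra components occur.

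Substituting into Proposition \ref{prop mayer vietoris}, the multiplicativity of $L^2$-torsion for $X = W_1 \cup_A W_2$ reads $T^{(2)}(X) \dot{=} T^{(2)}(A)/\bigl(T^{(2)}(W_1)\,T^{(2)}(W_2)\bigr)$. Collecting exponents then yields $\max(1,t)^{(e|pq|-|p|-|q|)|N|}$ in the $S^3$ case, where both $W_i$ are solid tori; $\max(1,t)^{(e|p|-1)|qN + n_{e+1}|}$ in the solid-torus case, where one $W_i$ is a thickened torus and hence drops out; and $\max(1,t)^{e|pqN + p\,n_{e+1} + q\,n_{e+2}|}$ in the thickened-torus case, where both $W_i$ drop out and only the $e$ annuli contribute. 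These are exactly the three stated formulas, after the routine matching of conventions; as a consistency check, each exponent equals the Thurston norm of $\phi$ on the link exterior, in line with the graph-manifold torsion computations of \cite{DFL,herrmannarxiv}.

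I expect the main obstacle to be verifying that Proposition \ref{prop mayer vietoris} genuinely applies, i.e.\ that each piece and each gluing annulus is $L^2$-acyclic of determinant class so the underlying weak-exactness hypotheses hold, and in particular handling the degenerate case in which $\phi$ restricts trivially to a piece, where one must use the nontriviality of $\gamma$ together with amenability to guarantee that the torus pieces still contribute $\dot{=} 1$. The remaining difficulty is pure bookkeeping: fixing meridian--longitude and orientation conventions so that the quantities inside the absolute values, and the assignment of $p$ versus $q$ to $L_{e+1}$ and $L_{e+2}$, emerge precisely as written.
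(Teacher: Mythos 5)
Your decomposition is the natural one (the standard Seifert picture of a torus link exterior as two solid tori, or their cored-out versions, glued along the $e$ complementary annuli of the Heegaard torus), and your exponent bookkeeping does reproduce all three formulas. But the central step is not licensed by the tools you are invoking: Proposition \ref{prop mayer vietoris} is stated for $X=A\cup B$ with $V=A\cap B$ \emph{compact connected}, whereas your intersection $W_1\cap W_2=A_1\sqcup\dots\sqcup A_e$ is disconnected as soon as $e\geqslant 2$. This cannot be repaired by gluing the annuli one at a time, because after the first gluing $W_1\cup_{A_1}W_2$ is already connected and attaching along $A_2$ is a self-identification, not a decomposition into two pieces meeting in a connected set. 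One would need a genuine extension of the gluing formula to disconnected $V$ (with the attendant base-point and $\pi_1$ issues in forming $\ell^2(G)\otimes_{\Z[\pi_1(V)]}C_*(\widetilde V)$), and you neither prove nor cite such an extension. This is precisely the difficulty the paper's longer route is designed to avoid: it only ever glues along a single connected annulus (for $T(p,q)\cup H_v\cup H_h$, i.e.\ the $e=1$ case of your picture) or a single torus (for the cabling step producing $T(ep,eq)\cup H_v\cup H_h$), obtains the multi-component pieces instead by identifying keychain-link exteriors with $W\times S^1$ and using Proposition \ref{prop WS1}, and then reaches $T(ep,eq)\cup H_v$ and $T(ep,eq)$ by two successive $\infty$-surgeries via Theorem \ref{thm surgery forget} rather than by a direct annular decomposition.

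Two smaller points. First, your conventions are internally inverted: by Corollary \ref{torsion solid torus} the solid torus has torsion $\dot{=}\ \max(1,t)^{-|\phi(c)|}$ (the boundary map enters with exponent $(-1)^1$), and likewise an annulus contributes $\max(1,t)^{-|\phi(\delta)|}$, while the gluing formula reads $T^{(2)}(X)\ \dot{=}\ T^{(2)}(A)\,T^{(2)}(B)/T^{(2)}(V)$ with the \emph{intersection} in the denominator — the opposite of what you wrote. Your two inversions cancel, so the final exponents are correct, but each intermediate identity as stated is false relative to the paper's definitions. Second, the hypotheses you flag at the end (infinite order of $\gamma$ on the cores and on $\delta$, determinant class of each piece) are indeed where the real verifications live; in the paper these are tracked carefully at every stage (e.g.\ the relation $\delta=\lambda_h^p\lambda_v^q$ is used to show that infinitude of two of $\gamma(\delta),\gamma(\lambda_h),\gamma(\lambda_v)$ forces the third), and your sketch would need the same care before the building-block computations apply.
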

Since the $L^2$-Alexander torsions of a Seifert-fibered manifold are known to be $(t \mapsto \max(1,t))$ to the power the Thurston norm of the cohomology class $\phi$ (see \cite{DFL,herrmannarxiv}), then
the previous Theorem
offers  a new way of computing  the Thurston norm for all links with Seifert-fibered exteriors. We hope this work can be of use to fellow topologists interested in Thurston norms of links.

Most of the results of this article come from the author's PhD thesis \cite{BAthesis}.
The article is organized as follows:
Section \ref{sec:def} reviews some well-known facts about knots, groups, and $L^2$-invariants, and can be skimmed by the experienced reader;
Section \ref{sec:first} reviews some important basic formulas satisfied by the $L^2$-Alexander torsions, notably the gluing formula;
Section \ref{sec:dehn} presents the various Dehn surgery formulas and the Main Theorem;
finally, Section \ref{sec:splicing} tackles the computation of the torsions of Seifert-fibered link exteriors as well as the connected sum and cabling formulas.

\section*{Acknowledgements}

I would like to thank my PhD advisor J\'er\^ome Dubois, for his teachings and great advice.
This article is based on work supported by the \textit{Minist\`ere de l'Enseignement Sup\'erieur et de la Recherche} at the Universit\'e Paris Diderot
and by the \textit{Swiss National Science Foundation}, subsidy $200021\_ 162431$, at the Universit\'e de Gen\`eve.

\section{Preliminaries}
\label{sec:def}

\subsection{Knots and links}

Here we follow mostly \cite{BZ}. We choose an orientation for the three-sphere $S^3$.
A \textit{link} with $c \in \N$ components is an embedding of a disjoint 
union of $c$ circles $\sqcup_{i=1}^c S^1$ into $S^3$; we will 
assume that all links have ordered oriented components. We consider links up to ambient isotopies in $S^3$ that preserve the order and the orientation 
of the components, unless precised otherwise. 
For $L = L_1 \cup \ldots \cup L_c$ an link in $S^3$, let $V(L)$ denote an open tubular neighbourhood of $L$, and $M_L = S^3 \setminus V(L)$ denote the \textit{exterior of $L$}, which is a compact $3$-manifold with toroidal boundary. 
The orientation of $M_L$ comes from the one of $S^3$, and does not depend on the orientation of $L$. Each boundary torus $\partial M_{L_i}$ is oriented with the convention that vectors normal to the boundary point outside of $M_L$. 
A \textit{split link} is a link $L \subset S^3$ such that there exists a $2$-sphere $\Sigma \subset S^3$,  $L=L' \sqcup L''$ with $L'$ and $L''$ sub-links, and $L'$ and $L''$ are contained in different connected components of $S^3 \setminus \Sigma$. Most of the time we will assume that links are non-split.
The \textit{group} of a link $L$ is the fundamental group of its exterior and is denoted $G_L = \pi_1(M_L)$. We denote $\alpha_L\colon G_L \twoheadrightarrow \Z^c$ the abelianization homomorphism, where $c$ is the number of components of $L$. The linking number between two components $L_i, L_j$ of a link $L$ is denoted $\mathrm{lk}(L_i,L_j)$.

A link with one component is called a \textit{knot}. When $K$ is an oriented knot, there exists, up to isotopy, a unique pair of simple closed curves $\mu_K$ and $\lambda_K$ on the $2$-torus $\partial M_K = \partial V(K)$ such that $\mu_K$ bounds a disk in $V(K)$ and $\lambda_K$ is homologous to $K$ in $V(K)$. We choose an orientation for these two curves such that the linking number between $\mu_K$ and $K$ and the intersection number between $\mu_K$ and $\lambda_K$ are both $+1$. The pair $(\mu_K,\lambda_K)$ is called a \textit{preferred meridian-longitude pair for $K$}. Any such $\mu_K$ is called a \textit{meridian curve}. Here we have used the notations and definitions of \cite{Tsau}.

\subsection{$L^2$-invariants}

We follow \cite{DFL} and \cite{Luc02} for the rest of Section \ref{sec:def}.
Given a countable discrete group~$G$, the completion of the algebra~$\C[G]$ endowed with the scalar product
$ \left \langle \sum_{g \in G} \lambda_g g , \sum_{g \in G} \mu_g g \right \rangle:= \sum_{g \in G} \lambda_g \overline{\mu_g}$
is the Hilbert space ~$$ \ell^2(G):= \left \{ \sum_{g \in G} \lambda_g g \ | \  \lambda_g \in \mathbb{C} , \sum_{g \in G} | \lambda_g |^2 < \infty \right \},$$ of square-summable complex functions on~$G$. We denote by~$B(\ell^2(G))$ the algebra of  operators on~$\ell^2(G)$ that are bounded with respect to the operator norm. 

Given~$h \in G$, we define the corresponding \textit{left-} and \textit{right-multiplication operators}~$L_{h}$ and~$R_{h}$  in~$B(\ell^2(G))$ as extensions of the automorphisms~$(g \mapsto hg)$ and~$(g \mapsto gh)$ of~$G$.
One can extend the operators~$R_{h}$~$\mathbb{C}$-linearly to an operator~$R_w\colon \ell^2(G) \to \ell^2(G)$ for any~$w \in \C[G]$. Moreover, if~$\ell^2(G)^n$ is endowed with its usual Hilbert space structure and~$A = \left ( a_{i,j} \right ) \in M_{p,q}(\C[G])$ is a~$\C[G]$-valued~$p\times q$ matrix, then the right multiplication
$$R_A:= \left (R_{a_{i,j}}\right )_{1 \leqslant i \leqslant p, 1 \leqslant j \leqslant q}$$ provides a bounded operator~$\ell^2(G)^{q} \rightarrow \ell^2(G)^{ p}$. Note that we shall consider elements of~$\ell^2(G)^n$ as \textit{column vectors} and suppose that matrices with coefficients in~$B(\ell^2(G))$ act on the \textit{left} (even if the coefficients are themselves \textit{right}-multiplication operators).

The \textit{von Neumann algebra}~$\mathcal{N}(G)$ of the group~$G$ is the sub-algebra of~$B(\ell^2(G))$ made up of~$G$-equivariant operators (i.e. operators that commute with all left multiplications~$L_h$). A \textit{finitely generated Hilbert~$\mathcal{N}(G)$-module} consists in a Hilbert space~$V$ together with a left~$G$-action by isometries such that there exists a positive integer~$m$ and an embedding~$\varphi$ of~$V$ into~$\ell^2(G)^m$. A \textit{morphism of finitely generated Hilbert~$\mathcal{N}(G)$-modules}~$f \colon U \rightarrow V$ is a linear bounded map which is~$G$-equivariant.

Denoting by~$e$ the neutral element of~$G$, the von Neuman algebra of~$G$ is endowed with the \textit{trace}~$\mathrm{tr}_{\mathcal{N}(G)} \colon \mathcal{N}(G)  \rightarrow \mathbb{C}, \phi \mapsto  \left \langle \phi (e) , e \right \rangle$ which extends to~\\
$\mathrm{tr}_{\mathcal{N}(G)} \colon M_{n,n}(\mathcal{N}(G)) \rightarrow \C$ by summing up the traces of the diagonal elements.

\begin{definition}
The \textit{von Neumann dimension} of a finitely generated Hilbert~$\mathcal{N}(G)$-module~$V$ is defined as
~$$\dim_{\mathcal{N}(G)}(V) := \mathrm{tr}_{\mathcal{N}(G)}(\mathrm{pr}_{\varphi(V)}) \in \R_{\geqslant 0},$$
where~$ \mathrm{pr}_{\varphi(V)} \colon  \ell^2(G)^m \to  \ell^2(G)^m~$ is the orthogonal projection onto~$\varphi(V)$.
\end{definition}

 The von Neumann dimension does not depend on the embedding of~$V$ into the finite direct sum of copies of~$\ell^2(G)$. 

\subsection{The Fuglede-Kadison determinant}

The \textit{spectral density}~$F(f)$ of a morphism~$f \colon U \to V$ of finitely generated Hilbert~$\mathcal{N}(G)$-modules maps~$\lambda \in \mathbb{R}_{\geqslant 0}$ to 
$$ F(f)(\lambda):= \sup \{
 \dim_{\mathcal{N}(G)} (L) | L \in \mathcal{L}(f,\lambda) \},$$
where~$\mathcal{L}(f,\lambda)$ is the set of finitely generated Hilbert~$\mathcal{N}(G)$-submodules of~$U$ on which the restriction of~$f$ has a norm smaller or equal to~$\lambda$. Since~$F(f)(\lambda)$ is monotonous and right-continuous, it defines a measure~$dF(f)$ on the Borel set of~$\R_{\geqslant 0}$ solely determined by the equation~$dF(f)(]a,b]) = F(f)(b)-F(f)(a)$ for all~$a<b$.

\begin{definition} \label{def detFK}
The \textit{Fuglede-Kadison determinant of $f$} is defined by:
\begin{equation*}\label{detFK}
{det}_{\mathcal{N}(G)}(f):= \exp \left ( \int_{0^+}^\infty \ln(\lambda) \, dF(f)(\lambda) \right )
\end{equation*}
if $\int_{0^+}^\infty \ln(\lambda) \, dF(f)(\lambda) > -\infty$\,; if not, $det_{\mathcal{N}(G)}(f) = 0$. 

When  $\int_{0^+}^\infty \ln(\lambda) \, dF(f)(\lambda) > -\infty$, we say that \emph{$f$ is of determinant class}.
\end{definition}
Here are two properties of the determinant we will use in the rest of this article (see \cite{Luc02} for more details and proofs).

\begin{proposition} \label{prop operations det}
Let $G$ be a countable discrete group.
\begin{enumerate}
\item For all $f,g$ morphisms of finitely generated Hilbert $\mathcal{N}(G)$-modules, $$
det_{\mathcal{N}(G)} \left (
\begin{pmatrix}
f & 0 \\ 0 & g
\end{pmatrix}
\right ) = det_{\mathcal{N}(G)}(f) \cdot det_{\mathcal{N}(G)}(g).$$
\item For all $t \in \C$, if $g \in G$ has infinite order, then $ Id - t R_g$ is injective and 
$$
det_{\mathcal{N}(G)} ( Id - t R_g) = \max ( 1 , |t|).
$$
\end{enumerate}
\end{proposition}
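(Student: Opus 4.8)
The plan is to handle the two parts separately, in each case reducing everything to the spectral measure $dF$ of Definition \ref{def detFK}. For (1), I would argue through the Borel functional calculus of the positive operators $f^\ast f$ and $g^\ast g$. Writing $D = \begin{pmatrix} f & 0 \\ 0 & g\end{pmatrix}$, one has $D^\ast D = \begin{pmatrix} f^\ast f & 0 \\ 0 & g^\ast g\end{pmatrix}$, so every spectral projection of $D^\ast D$ is block-diagonal and $\mathrm{tr}_{\mathcal{N}(G)}$ is additive on the two blocks. Hence the spectral density functions add, $F(D)(\lambda) = F(f)(\lambda) + F(g)(\lambda)$ --- equivalently, the largest $\mathcal{N}(G)$-submodule on which $D$ has norm $\leqslant \lambda$ is the orthogonal sum of the corresponding spectral subspaces for $f$ and $g$ --- so the Stieltjes measures satisfy $dF(D) = dF(f) + dF(g)$ on $(0,\infty)$. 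Integrating $\ln\lambda$ and exponentiating gives the product formula, with the convention that the right-hand side is $0$ as soon as one factor fails to be of determinant class. This is the finite analogue of the multiplicativity of $L^2$-torsion, and I would simply record it from \cite{Luc02}.

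For the injectivity in (2), observe that $R_g$ is unitary on $\ell^2(G)$, being induced by a bijection of $G$. If $\xi = tR_g\xi$ with $|t|\neq 1$, then $\|\xi\| = |t|\,\|\xi\|$ forces $\xi = 0$. If $|t| = 1$, iterating gives $\xi = t^n R_{g^n}\xi$ for all $n$, so, writing $\xi = \sum_{h\in G}\xi_h h$, the coefficients satisfy $|\xi_k| = |\xi_{kg^{-n}}|$ for all $k$ and all $n$. Since $g$ has infinite order, each right coset $k\langle g\rangle$ is infinite, and a square-summable sequence that is constant in modulus along an infinite set must vanish there; hence $\xi = 0$. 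This is precisely where the hypothesis that $g$ has infinite order is used.

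For the determinant in (2), I would use that $\mathrm{Id} - tR_g = R_{1-tg}$ only involves the infinite cyclic subgroup $H = \langle g\rangle$. The inclusion $\mathcal{N}(H)\hookrightarrow\mathcal{N}(G)$ is trace-preserving, because the coefficient of the neutral element $e$ in any word formed from $1-tg$ and its adjoint is the same whether computed in $\mathbb{C}[H]$ or in $\mathbb{C}[G]$; therefore the spectral measure of $R_{1-tg}$, and with it the Fuglede--Kadison determinant, agrees whether taken over $\mathcal{N}(G)$ or over $\mathcal{N}(H) \cong \mathcal{N}(\mathbb{Z})$. Under the Fourier isomorphism $\mathcal{N}(\mathbb{Z}) \cong L^\infty(S^1)$ one has $\ell^2(\mathbb{Z}) \cong L^2(S^1)$ with $R_g$ becoming multiplication by $z$ and $\mathrm{tr}_{\mathcal{N}(\mathbb{Z})}$ becoming integration against normalized Haar measure $d\mu$, so that
$$det_{\mathcal{N}(\mathbb{Z})}(\mathrm{Id}-tR_g) = \exp\left(\int_{S^1}\ln|1-tz|\,d\mu(z)\right).$$
The integral is evaluated by Jensen's formula: when $|t|<1$ it is the mean value of the harmonic function $w\mapsto\ln|1-tw|$ on the disc, hence $0$; when $|t|>1$ the factorization $|1-tz| = |t|\,|1-t^{-1}z^{-1}|$ reduces it to the previous case and gives $\ln|t|$; and when $|t|=1$ it equals $0$. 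In every case the value is $\ln\max(1,|t|)$, and exponentiating yields $\max(1,|t|)$.

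The main obstacle is the analytic evaluation of $\int_{S^1}\ln|1-tz|\,d\mu$, together with the care required at $|t|=1$: there the integrand has a logarithmic singularity, and one must confirm that it remains integrable so that the operator is genuinely of determinant class and the reduction through the Fourier picture is legitimate. By contrast, the additivity $F(D) = F(f)+F(g)$ in (1) is routine once phrased via spectral projections, and the passage from $\mathcal{N}(G)$ to $\mathcal{N}(\mathbb{Z})$ in (2) is a standard restriction property of the determinant, so the genuine content lies in the boundary analysis of the integral.
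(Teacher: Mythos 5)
Your proof is correct, and the paper itself offers no argument for this proposition: it simply refers the reader to \cite{Luc02}, where both facts are established exactly along the lines you follow (additivity of the spectral density function for a block-diagonal operator via the spectral projections of $D^\ast D$, and, for $\mathrm{Id}-tR_g$, restriction of the Fuglede--Kadison determinant to $\mathcal{N}(\langle g\rangle)\cong\mathcal{N}(\Z)\cong L^\infty(S^1)$ followed by the Jensen-type evaluation of $\int_{S^1}\ln|1-tz|\,d\mu$, including the integrable logarithmic singularity at $|t|=1$). So you have in effect reconstructed the standard proof from the cited source; nothing is missing.
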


\subsection{$L^2$-torsion} \label{$L^2$-torsion}

A \textit{finite Hilbert~$\NN(G)$-chain complex}~$C_*$ is a sequence of morphisms of finitely generated Hilbert~$\NN(G)$-modules
$$C_* = \left (0 \to C_n \overset{\partial_n}{\longrightarrow} C_{n-1} 
\overset{\partial_{n-1}}{\longrightarrow} \ldots
\overset{\partial_2}{\longrightarrow} C_1 \overset{\partial_1}{\longrightarrow} C_0 \to 0\right )$$
such that~$\partial_p \circ \partial_{p+1} =0$ for all~$p$.
The \textit{$p$-th~$L^2$-homology} of such a chain complex~$C_*$ is the finitely generated Hilbert~$\NN(G)$-module
$$H_p^{(2)}(C_*) := \textrm{Ker}(\partial_p) / \overline{\textrm{Im}(\partial_{p+1})}.$$
The \textit{$p$-th~$L^2$-Betti number of~$C_*$} is defined as~$b_p^{(2)}(C_*) := \dim_{\NN(G)}(H_p^{(2)}(C_*))$. A finite Hilbert~$\NN(G)$-chain complex~$C_*$ is \textit{weakly acyclic} if its~$L^2$-homology is trivial  (i.e. if all its~$L^2$-Betti numbers vanish) and of \textit{determinant class} if all the operators~$\partial_p$ are of determinant class. 

\begin{definition}
Let $C_*$ be a finite Hilbert $\NN(G)$-chain complex as above. 
We define its \textit{$L^2$-torsion} by
$$T^{(2)}(C_*) := \prod_{i=1}^n \det {}_{\NN(G)}(\partial_i)^{(-1)^i} \in \R_{>0}$$
when $C_*$ is weakly acyclic and of determinant class, and by~$T^{(2)}(C_*)=0$ otherwise.
\end{definition}

The following proposition will be useful for computations of $L^2$-torsions. Compare with \cite[Proposition 1.58]{BAthesis} and \cite[Lemma 3.1]{DFL}.

\begin{proposition} \label{prop tau chaine}
Let 
$C_* = \left (0 \to 
\ell^2(G)^k
\overset{\partial_2}{\longrightarrow} 
\ell^2(G)^{k+l}
 \overset{\partial_1}{\longrightarrow} 
\ell^2(G)^l
  \to 0\right )$
   be a $2$-dimensional finite Hilbert $\NN(G)$-chain complex and let $J \subset \{1, \ldots,k+l\}$ be a subset of $\{1, \ldots,k+l\}$ of size $l$.
For $i=1,2$, $\partial_i$ is naturally written as a matrix with coefficients operators in $B(\ell^2(G))$. We write $\partial_1(J): \ell^2(G)^l \to \ell^2(G)^l$ the operator composed of the columns of $\partial_1$ indexed by $J$, and $\partial_2(J): \ell^2(G)^k \to \ell^2(G)^k$ the operator obtained from $\partial_2$ by deleting the rows indexed by $J$.
If $\partial_2(J)$ and $\partial_1(J)$ are injective and of determinant class, then $C_*$ is weakly acyclic and of determinant class, and
$$T^{(2)}(C_*)  = \dfrac{\det_{\NN(G)}(\partial_2)}{\det_{\NN(G)}(\partial_1)} = \dfrac{\det_{\NN(G)}(\partial_2(J))}{\det_{\NN(G)}(\partial_1(J))}.$$
\end{proposition}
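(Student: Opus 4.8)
The plan is to prove, in turn, weak acyclicity of $C_*$, a decoupling of $C_*$ into two elementary complexes, and finally the determinant-class property together with the torsion formula. Throughout I write $J^c = \{1,\dots,k+l\}\setminus J$ (of size $k$) and split $\ell^2(G)^{k+l} = \ell^2(G)^l\oplus\ell^2(G)^k$ along the $J$- and $J^c$-coordinates, with inclusion $\iota_J\colon\ell^2(G)^l\hookrightarrow\ell^2(G)^{k+l}$ and projection $P_{J^c}\colon\ell^2(G)^{k+l}\to\ell^2(G)^k$, so that $\partial_1(J) = \partial_1\circ\iota_J$ and $\partial_2(J) = P_{J^c}\circ\partial_2$.

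First I would check weak acyclicity at the level of $L^2$-Betti numbers. Since $\partial_2(J) = P_{J^c}\partial_2$ is injective, so is $\partial_2$, whence $H_2^{(2)}(C_*) = \ker\partial_2 = 0$ and $b_2^{(2)}(C_*) = 0$. Next, $\partial_1(J)$ is an injective endomorphism of $\ell^2(G)^l$, so by additivity of the von Neumann dimension $\dim_{\NN(G)}\overline{\mathrm{Im}\,\partial_1(J)} = l - \dim_{\NN(G)}\ker\partial_1(J) = l$; since $\overline{\mathrm{Im}\,\partial_1}\supseteq\overline{\mathrm{Im}\,\partial_1(J)}$, this forces $b_0^{(2)}(C_*) = \dim_{\NN(G)}C_0 - \dim_{\NN(G)}\overline{\mathrm{Im}\,\partial_1} = 0$. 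The $L^2$-Euler characteristic identity $\sum_p(-1)^p b_p^{(2)}(C_*) = \sum_p(-1)^p\dim_{\NN(G)}C_p = l-(k+l)+k = 0$ then gives $b_1^{(2)}(C_*) = 0$, so $C_*$ is weakly acyclic. (Here I only need the dimensions to vanish, not literal density, which is exactly what the definition of weak acyclicity asks for.)

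For the torsion I would introduce the operator $\Phi\colon\ell^2(G)^k\oplus\ell^2(G)^l\to\ell^2(G)^{k+l}$, $\Phi(x,y) = \partial_2 x + \iota_J y$. Applying $P_{J^c}$ shows $\Phi$ is injective (using injectivity of $\partial_2(J)$, then of $\iota_J$), hence a weak isomorphism; in the $J^c$/$J$ decomposition of the target it is block lower-triangular with diagonal blocks $\partial_2(J)$ and $\mathrm{Id}_l$. Factoring $\Phi$ as $\mathrm{diag}(\partial_2(J),\mathrm{Id}_l)$ times a unipotent operator (Fuglede--Kadison determinant $1$) and using Proposition \ref{prop operations det}(1) gives $\det_{\NN(G)}(\Phi) = \det_{\NN(G)}(\partial_2(J))$, and in particular $\Phi$ is of determinant class. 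Viewing $\Phi$ as a change of the Hilbert basis of the chain module $C_1$ replaces $C_*$ by the isomorphic complex $C_*'$ with $\partial_2' = \Phi^{-1}\partial_2 = \binom{\mathrm{Id}_k}{0}$ and $\partial_1' = \partial_1\Phi = (\,0\ \ \partial_1(J)\,)$, where I used $\partial_1\partial_2 = 0$ and $\partial_1\iota_J = \partial_1(J)$. Thus $C_*'$ is the direct sum of the elementary acyclic complex $\ell^2(G)^k\xrightarrow{\mathrm{Id}}\ell^2(G)^k$ in degrees $2,1$ and of $\ell^2(G)^l\xrightarrow{\partial_1(J)}\ell^2(G)^l$ in degrees $1,0$; by multiplicativity of $T^{(2)}$ over direct sums (an immediate consequence of Proposition \ref{prop operations det}(1)) this yields $T^{(2)}(C_*') = \det_{\NN(G)}(\partial_1(J))^{-1}$.

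Finally I would record the effect of the base change $\Phi$ on the torsion: it contributes the factor $\det_{\NN(G)}(\Phi)$, so that $T^{(2)}(C_*) = \det_{\NN(G)}(\Phi)\,T^{(2)}(C_*') = \det_{\NN(G)}(\partial_2(J))\,/\,\det_{\NN(G)}(\partial_1(J))$, which is finite and nonzero; this simultaneously establishes that $C_*$ is of determinant class, and the first displayed equality is then just the definition $T^{(2)}(C_*) = \prod_{i}\det_{\NN(G)}(\partial_i)^{(-1)^i} = \det_{\NN(G)}(\partial_2)/\det_{\NN(G)}(\partial_1)$. The hard part is precisely this last bookkeeping step: because $\partial_1$ and $\partial_2$ are not square, the naive multiplicativity $\det_{\NN(G)}(gf) = \det_{\NN(G)}(g)\det_{\NN(G)}(f)$ fails for the compositions $\Phi^{-1}\partial_2$ and $\partial_1\Phi$, so one cannot distribute $\det_{\NN(G)}(\Phi)$ termwise; instead one must invoke the invariance of the $L^2$-torsion under change of Hilbert basis, which correctly encodes this single factor, and carefully propagate the determinant-class hypothesis from $\partial_1(J),\partial_2(J)$ to $\partial_1,\partial_2,\Phi$ using the multiplicativity that \emph{does} hold for square weak isomorphisms of determinant class together with the triangular factorization above. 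Equivalently, and more intrinsically, the factor $\det_{\NN(G)}(\Phi) = \det_{\NN(G)}(\partial_2(J))$ and the required cancellation can be read off from the orthogonal decomposition $\ell^2(G)^{k+l} = \ker\partial_1\oplus(\ker\partial_1)^\perp$ furnished by weak acyclicity, compared with the decomposition along $J$, where the needed equality reduces to $\det_{\NN(G)}(U_{11}) = \det_{\NN(G)}(U_{22})$ for the diagonal blocks of the unitary relating the two orthogonal splittings.
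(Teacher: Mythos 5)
The paper itself gives no proof of Proposition \ref{prop tau chaine}: it only points to \cite[Proposition 1.58]{BAthesis} and \cite[Lemma 3.1]{DFL}, so there is no in-text argument to measure yours against. Your argument is essentially the standard one used in those references, and it is correct in substance: the Betti-number count for weak acyclicity is clean; the operator $\Phi = \left(\partial_2 \ \ \iota_J\right)$ is the right object; its block lower-triangular form gives $\det_{\NN(G)}(\Phi) = \det_{\NN(G)}(\partial_2(J))$; and the reduced complex $C_*'$ visibly has torsion $\det_{\NN(G)}(\partial_1(J))^{-1}$. The one step you invoke rather than prove is the transformation rule $T^{(2)}(C_*) = \det_{\NN(G)}(\Phi)\cdot T^{(2)}(C_*')$ for the chain map $(\mathrm{Id},\Phi,\mathrm{Id})\colon C_*'\to C_*$ whose components are weak isomorphisms of determinant class; this is precisely the lemma in \cite{Luc02} on the behaviour of $L^2$-torsion under determinant-class weak chain isomorphisms, and it is also what propagates the determinant-class property from $C_*'$ to $C_*$. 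You are right to insist that termwise multiplicativity of $\det_{\NN(G)}$ fails for the non-square compositions $\partial_2=\Phi\circ\binom{\mathrm{Id}_k}{0}$ and $\partial_1\circ\Phi$ (indeed $\det_{\NN(G)}(\partial_2)\neq\det_{\NN(G)}(\partial_2(J))$ in general), so routing through the chain-level lemma is the correct move. Two small cautions: $\Phi$ is only a weak isomorphism, not invertible, so ``$\Phi^{-1}\partial_2$'' must be read as the exact factorization $\partial_2=\Phi\circ\binom{\mathrm{Id}_k}{0}$ rather than as composition with a bounded inverse; and the closing ``equivalently, more intrinsically'' remark reducing everything to $\det_{\NN(G)}(U_{11})=\det_{\NN(G)}(U_{22})$ for a unitary relating the two splittings is asserted without justification and should be dropped or proven --- fortunately your main line of argument does not depend on it.
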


There exists an immediate generalisation of Proposition \ref{prop tau chaine} to any dimension that mirrors the formula of \cite[Theorem 2.2]{Turaev}.

\subsection{$L^2$-Alexander torsions}

We follow the definitions and notations of \cite{DFL}.
Let $\pi$ be a group and $\phi\colon \pi \to \Z$, $\gamma\colon \pi \to G$ two group homomorphisms.
We say that $(\pi,\phi,\gamma)$ \textit{forms an admissible triple} if $\phi \colon  \pi \to \Z$ factors 
through $\gamma$ (i.e. there is a group homomorphism $\psi \colon G 
\to \Z$ such that $ \phi = \psi \circ \gamma$, see the diagram below).

\begin{center}
\begin{tikzpicture}
\begin{scope}[xshift=7cm,rotate=0,scale=1]
[description/.style={fill=white,inner sep=2pt}] 
\matrix(a)[matrix of math nodes, row sep=1em, column sep=3.5em, text height=1.5ex, text depth=0.25ex] 
{ \pi & G \\ \ & \Z\\}; 
\path[->](a-1-1) edge node[above]{$\gamma$} (a-1-2);
\path[->](a-1-1) edge node[below]{$\phi$} (a-2-2);
\path[->, dotted](a-1-2) edge  (a-2-2);
\end{scope}
\end{tikzpicture}
\end{center}

For $X$ a CW-complex, we say that $(X,\phi\colon \pi_1(X) \to \Z,\gamma\colon \pi_1(X) \to G)$ \textit{forms an admissible triple} if $(\pi_1(X),\phi,\gamma)$ forms one.
Let $(X,\phi,\gamma)$ be such an admissible triple, $\pi= \pi_1(X)$ and $t>0$. We define a ring homomorphism
$$\kappa(\pi, \phi, \gamma, t)\colon \begin{pmatrix}
 & \Z[\pi]& \longrightarrow & \R[G] \\
 & \sum_{j=1}^r m_j g_j & \longmapsto & \sum_{j=1}^r m_j t^{\phi(g_j)} \gamma(g_j)
\end{pmatrix}
$$
and we also denote $\kappa(\pi, \phi, \gamma, t)$ its induction over the $M_{p,q}(\Z[\pi])$.

Assume $X$ is compact. The cellular chain complex of $\widetilde{X}$ denoted
$C_*(\widetilde{X},\Z) = $ \\
$\left (\ldots \to \bigoplus_i \Z[\pi] \widetilde{e}_i^k \to \ldots\right )$
is a chain complex of left $\Z[\pi]$-modules.
Here the $\widetilde{e}_i^k$ are lifts of the cells $e_i^k$ of $X$.
The group $\pi$ acts on the right on $\ell^2(G)$ by $g \mapsto R_{\kappa(\pi, \phi, \gamma, t)(g)}$, an action which induces a structure of right $\Z[\pi]$-module on $\ell^2(G)$.
Let
$$C_*^{(2)}(X,\phi,\gamma,t) = \ell^2(G) \otimes_{\Z[\pi]}  C_*(\widetilde{X},\Z)$$
 denote the finite Hilbert $\NN(G)$-chain complex obtained by the tensor product associated to these left- and right-actions; we call $C_*^{(2)}(X,\phi,\gamma,t)$ \textit{a $\NN(G)$-cellular chain complex of $X$}. 

\begin{definition}
If $C_*^{(2)}(X,\phi,\gamma,t)$ is a $\NN(G)$-cellular chain complex of $X$, then denote
$$ T^{(2)}(X,\phi,\gamma)(t) = T^{(2)}\left (C_*^{(2)}(X,\phi,\gamma,t)\right )$$
the \textit{$L^2$-Alexander torsion of $(X,\phi,\gamma)$ at $t>0$}. It is non-zero if and only if $C_*^{(2)}(X,\phi,\gamma,t)$ is weakly acyclic and of determinant class
\end{definition}

\begin{remark}
As a consequence of \cite{L}, if $X$ is a compact connected oriented irreducible $3$-manifold with empty or toroidal boundary and infinite fundamental group $\pi = \pi_1(X)$, then 
 $C_*^{(2)}(X,\phi,id,t)$ is weakly acyclic and of determinant class for all $\phi$ and all $t$.
In particular,  most of the statements in this article concern torsions of exteriors of non-split links in the three-sphere, and thus one can skip the assumptions on weak acyclicity and determinant class for $\gamma = id$ in such statements.
\end{remark}

We will  consider the equivalence class of $(t \mapsto T^{(2)}(X,\phi,\gamma,t))$ up to multiplication by the $(t \mapsto t^m), m \in \Z$, which does not depend on the CW-structure chosen on $X$ (the technical details can be found in \cite{BAthesis}). For two maps $f,g\colon \R_{>0} \to \R_{>0}$, we write 
$$ f \ \dot{=} \ g \ \Longleftrightarrow \ \exists m \in \Z, \forall t>0, f(t) = t^m g(t).$$
Note that for $t>0$ and any integer $k$, $\max(1,t^k) = t^{\frac{k-|k|}{2}} \max(1,t)^{|k|}$. Thus 
$\left (t \mapsto \max(1,t^k)\right ) \ \dot{=} \ \left (t \mapsto \max(1,t)^{|k|}\right )$.

For $X$ a CW-complex, its \textit{$L^2$-torsion}  is defined as
$ T^{(2)}(X) := T^{(2)}(X,0,id)(1)$
when $C_*^{(2)}(X,0,id,1)$ is weakly acyclic and of determinant class. Of course, historically, $L^2$-torsions came before $L^2$-Alexander torsions (see \cite[Section 3.4]{Luc02}).
The following astonishing theorem of W. L\"uck and T. Schick (see \cite{LS99}) states that the $L^2$-torsion of an irreducible $3$-manifold gives precisely the simplicial volume of this manifold. 
Recall that a compact connected orientable $3$-manifold $M$ is called \textit{hyperbolic} if its interior admits a complete Riemannian metric whose sectional curvature is constant equal to $-1$, and \textit{Seifert} if it admits a foliation by circles.

\begin{theorem}[\cite{Luc02}, Theorem 4.3] \label{thm t2 volume}
Let $M$ be a compact connected orientable irreducible $3$-manifold with infinite fundamental group and empty or incompressible toroidal boundary.
According to Thurston-Perelman's Geometrization theorem, $M$ splits along disjoint incompressible tori into pieces that are Seifert manifolds or hyperbolic manifolds with finite volume. 
Moreover, $C_*^{(2)}(M,0,id,1)$ is weakly acyclic and of determinant class, and
$$T^{(2)}(M) = \exp\left (\dfrac{\textrm{vol}(M)}{6 \pi}\right )$$
where $\textrm{vol}(M) = \sum_{i=1}^h \textrm{vol}_{\textrm{hyp}}(M_i)$ is the simplicial volume of $M$, defined as the sum of the hyperbolic volumes of the hyperbolic pieces $M_1 , \ldots , M_h$.
\end{theorem}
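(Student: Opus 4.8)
The plan is to reduce the computation of $T^{(2)}(M)$ to its geometric pieces and to evaluate each type of piece separately, the Seifert and torus contributions being trivial and the hyperbolic ones carrying the volume. First I would record that $C_*^{(2)}(M,0,\mathrm{id},1)$ is weakly acyclic and of determinant class: since $M$ is irreducible with infinite $\pi_1$ and empty or toroidal boundary, this is exactly the situation covered by the result of \cite{L} quoted in the Remark above, so $T^{(2)}(M)$ is a well-defined positive real number. Next, by Thurston--Perelman Geometrization I would cut $M$ along a maximal family of disjoint incompressible tori $T_1,\ldots,T_k$ into pieces $M_1,\ldots,M_h$, each Seifert or finite-volume hyperbolic. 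Each gluing torus has $\pi_1(T_j)\cong\Z^2$, an infinite amenable group, hence $T_j$ is $L^2$-acyclic with vanishing $L^2$-Betti numbers and trivial $L^2$-torsion $T^{(2)}(T_j)=1$.

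I would then glue these data back together using the multiplicativity of the $L^2$-torsion (the sum formula underlying \cite[Theorem 3.35 (1)]{Luc02}). Because each gluing torus is $L^2$-acyclic and of determinant class, the Mayer--Vietoris sequence associated to the decomposition splits off its boundary terms, and the torsion of $M$ factors as
$$T^{(2)}(M) = \prod_{i=1}^h T^{(2)}(M_i),$$
once one has checked weak acyclicity and determinant class on each piece (again guaranteed by \cite{L}). For a Seifert piece $M_i$ with infinite fundamental group, the regular fiber generates a normal infinite cyclic, hence amenable, subgroup of $\pi_1(M_i)$; the general vanishing principle for $L^2$-torsion in the presence of such a subgroup (equivalently, of an $S^1$-action) gives $T^{(2)}(M_i)=1$, so the Seifert pieces contribute nothing to the product.

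The analytic heart is the hyperbolic case. For a finite-volume hyperbolic piece $M_i$ I would first invoke the $L^2$-version of the Cheeger--M\"uller theorem to identify the combinatorial torsion $T^{(2)}(M_i)$ with the analytic $L^2$-torsion, whose determinant-class hypotheses in this setting follow from the work of Burghelea--Friedlander--Kappeler--McDonald. I would then compute the analytic $L^2$-torsion directly from the heat kernel of the Hodge Laplacian acting on $L^2$-forms on the universal cover $\widetilde{M_i}=\mathbb{H}^3$: feeding the explicit Plancherel/spectral data of $\mathbb{H}^3$ into the alternating integral of $\ln(\lambda)$ against the spectral density collapses everything to a single universal constant times the hyperbolic volume, yielding
$$T^{(2)}(M_i) = \exp\!\left(\frac{\mathrm{vol}_{\mathrm{hyp}}(M_i)}{6\pi}\right).$$
The cusped (non-compact) case forces an extra truncation argument to control the contribution of the ends.

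Combining the three evaluations gives
$$T^{(2)}(M) = \prod_{i=1}^h T^{(2)}(M_i) = \exp\!\left(\frac{1}{6\pi}\sum_{i=1}^h \mathrm{vol}_{\mathrm{hyp}}(M_i)\right) = \exp\!\left(\frac{\mathrm{vol}(M)}{6\pi}\right),$$
as claimed. The main obstacle is squarely the hyperbolic computation: both the $L^2$-Cheeger--M\"uller comparison (with its determinant-class prerequisites) and the explicit heat-kernel evaluation of the analytic $L^2$-torsion of $\mathbb{H}^3$ are substantial, and the finite-volume case adds genuine analytic difficulty near the cusps.
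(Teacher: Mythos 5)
This statement is not proved in the paper at all: it is imported verbatim from the literature (it is Theorem 4.3 of \cite{Luc02}, due originally to L\"uck--Schick \cite{LS99}), so there is no in-paper argument to compare yours against. What you have written is, in outline, exactly the proof that lives in those references: JSJ/geometric decomposition along incompressible tori, triviality of the torus contributions (infinite amenable $\pi_1$, vanishing $L^2$-Betti numbers, torsion $1$), the sum formula of \cite[Theorem 3.35 (1)]{Luc02} to reduce to the pieces, vanishing of the $L^2$-torsion of Seifert pieces via the infinite cyclic normal fiber subgroup (equivalently the $S^1$-action), and the analytic evaluation $\exp(\mathrm{vol}/6\pi)$ on hyperbolic pieces via the $L^2$-Cheeger--M\"uller comparison and the heat kernel on $\mathbb{H}^3$, with the cusped case handled by truncation. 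Two remarks on the details. First, your appeal to \cite{L} for weak acyclicity and determinant class is admissible inside this paper's logical framework (its Remark states exactly that consequence), but it is historically anachronistic: in \cite{LS99} the determinant-class statement for the pieces is itself part of what is being established, via approximation techniques, not quoted from elsewhere. Second, your sketch defers precisely the two genuinely hard ingredients --- the combinatorial-versus-analytic comparison (Burghelea--Friedlander--Kappeler--McDonald) and the finite-volume/cusp analysis --- to the literature; that is unavoidable for a result of this depth, and you flag it honestly, but it means your text is a correct roadmap of the known proof rather than a self-contained one. As a reconstruction of the argument behind the cited theorem, it is accurate.
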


\subsection{$L^2$-Alexander torsions of links}
Let $L = L_1 \cup \ldots \cup L_c$ be a link in $S^3$, $M_L$ its exterior and $\alpha_L \colon G_L \to \Z^c$ the abelianization of its group. Any homomorphism $\phi\colon G_L \to \Z$ factors through $\alpha_L$ and thus is written
$\phi = (n_1, \ldots,n_c) \circ\alpha_L$
where $n_1, \ldots,n_c \in \Z$.
Any admissible triple $(M_L,\phi,\gamma)$ can thus be written 
$(M_L,(n_1, \ldots,n_c) \circ\alpha_L,\gamma)$, and we will therefore denote
$$T^{(2)}_{L,(n_1, \ldots,n_c)}(\gamma)(t)
:= T^{(2)}(M_L,(n_1, \ldots,n_c) \circ\alpha_L,\gamma)(t)$$
the \textit{$L^2$-Alexander torsion associated to the multi-link $(L,(n_1, \ldots,n_c))$ and the morphism $\gamma$ at the value $t$},
and $T^{(2)}_{L,(n_1, \ldots,n_c)}
:= T^{(2)}(M_L,(n_1, \ldots,n_c) \circ\alpha_L,id)$ the \textit{full $L^2$-Alexander torsion function associated to the multi-link $(L,(n_1, \ldots,n_c))$}.

Note that if $L=K$ is a knot and $n\in \Z$, then the full $L^2$-Alexander torsion of $K$ reduces to its $L^2$-Alexander invariant $\Delta_{K}^{(2)}$ (see \cite{DFL} for details):
$$T^{(2)}_{K,n}(t) \ \dot{=} \ 
\dfrac{\Delta_{K}^{(2)}(t^n)}{\max(1,t)^{|n|}}.$$

\section{First formulas}
\label{sec:first}

We present three useful formulas, which are direct generalizations of known properties of the $L^2$-torsions. The details of the proofs
of Propositions \ref{prop L2 torsion simple homotopy}
and \ref{prop mayer vietoris}
 can be found in \cite{BAthesis}.

\subsection{Simple homotopy equivalence}

The following proposition states that the $L^2$-Alexander torsions are invariant by simple homotopy equivalence.
Note that this result is a direct generalization of the formulas of \cite[Corollary 9.2]{Turaev} and \cite[Theorem 3.96 (1)]{Luc02}, and was announced in \cite{DFL}.
The technical details of the proof can be found in \cite[Theorem 2.12]{BAthesis}.

\begin{proposition}[\cite{BAthesis}, Theorem 2.12] \label{prop L2 torsion simple homotopy}
Let $f\colon X \to Y$ be a simple homotopy equivalence between two finite CW-complexes that induces the group isomorphism $f_*\colon \pi_1(X) \to \pi_1(Y)$. The triple
$(Y, \phi, \gamma)$ is an admissible triple if and only if $(X, \phi \circ f_*, \gamma \circ f_*)$  is one, 
the $\NN(G)$-cellular chain complex 
$C_*^{(2)}(X, \phi \circ f_*, \gamma \circ f_*,t)$ is weakly acyclic and of determinant class if and only if 
$C_*^{(2)}(Y, \phi, \gamma,t)$ is, and
$$T^{(2)}(X, \phi \circ f_*, \gamma \circ f_*)(t) \ \dot{=} \
T^{(2)}(Y, \phi, \gamma)(t).$$
\end{proposition}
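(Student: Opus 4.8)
The plan is to reduce the statement to the behaviour of the $L^2$-Alexander torsion under a single elementary expansion (and its inverse, an elementary collapse), since by definition any simple homotopy equivalence $f\colon X\to Y$ is, up to homotopy, a finite composition of such moves together with cellular homeomorphisms, and the relation $\dot{=}$ is transitive and multiplicative along compositions. First I would dispose of the admissibility claim: because $f_*\colon\pi_1(X)\to\pi_1(Y)$ is an isomorphism, a homomorphism $\psi\colon G\to\Z$ satisfies $\phi=\psi\circ\gamma$ if and only if it satisfies $\phi\circ f_* = \psi\circ(\gamma\circ f_*)$, so $(Y,\phi,\gamma)$ is admissible exactly when $(X,\phi\circ f_*,\gamma\circ f_*)$ is. Cellular homeomorphisms induce isometric isomorphisms of the twisted $\NN(G)$-chain complexes and hence leave the torsion unchanged, so the entire content of the proposition is concentrated in the elementary expansions.

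Next I would analyse a single elementary expansion $Y=X\cup e^k\cup e^{k+1}$, where the attaching data add one free generator in each of degrees $k$ and $k+1$ to the $\Z[\pi]$-cellular chain complex of $\widetilde X$, with the new part of the differential sending the lift $\widetilde e^{k+1}$ to $\pm g\,\widetilde e^k$ (for some $g\in\pi$ arising from the chosen lift) plus lower-order terms landing in $C_k(\widetilde X,\Z)$. Applying the ring homomorphism $\kappa(\pi,\phi,\gamma,t)$ and tensoring with $\ell^2(G)$ turns this incidence into the operator $R_{\kappa(\pm g)}=\pm\,t^{\phi(g)}R_{\gamma(g)}$ on one $\ell^2(G)$-summand.

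Then I would perform elementary row and column operations over $\NN(G)$ — equivalently, an isometric change of $\NN(G)$-basis — to put the enlarged complex in block form exhibiting it as $C_*^{(2)}(X,\phi\circ f_*,\gamma\circ f_*,t)$ together with the short elementary complex $0\to\ell^2(G)\xrightarrow{R_{\kappa(\pm g)}}\ell^2(G)\to0$. Since $R_{\gamma(g)}$ is unitary, the operator $R_{\kappa(\pm g)}=\pm t^{\phi(g)}R_{\gamma(g)}$ is bijective up to the positive scalar $t^{\phi(g)}$, so this short complex is weakly acyclic and of determinant class with Fuglede--Kadison determinant equal to the monomial $t^{\phi(g)}$ (a computation of the same flavour as Proposition \ref{prop operations det}(2)). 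Invoking the multiplicativity of the determinant over this splitting (Proposition \ref{prop operations det}(1), in the weakly-exact form of \cite[Theorem 3.35 (1)]{Luc02}) then shows simultaneously that the two $\NN(G)$-chain complexes are weakly acyclic and of determinant class together, and that their torsions differ only by the factor $t^{\phi(g)}$, hence agree up to $\dot{=}$.

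The main obstacle I anticipate is carrying out the splitting-off of the elementary acyclic complex cleanly in the Hilbert-$\NN(G)$ category: unlike the purely algebraic situation of \cite[Corollary 9.2]{Turaev}, one cannot merely pass to a quotient by an acyclic subcomplex but must realise the splitting through an isometric change of basis and control its effect on the determinant via Lück's multiplicativity for weakly exact sequences, all while propagating the determinant-class hypothesis in both directions. Once this bookkeeping is in place, the fact that the extra factor is a monomial in $t$ makes the invariance up to $\dot{=}$ automatic.
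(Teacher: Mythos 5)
Your proposal is correct and follows essentially the same route as the paper, which defers the technical details to the author's thesis (Theorem 2.12 there) but explicitly frames the result as the direct generalization of Turaev's Corollary 9.2 and L\"uck's Theorem 3.96(1), i.e.\ precisely the reduction to cellular homeomorphisms and elementary expansions, with each expansion contributing the two-term complex $0\to\ell^2(G)\xrightarrow{\pm t^{\phi(g)}R_{\gamma(g)}}\ell^2(G)\to 0$ whose torsion is the monomial $t^{\pm\phi(g)}$. The only wording to adjust is that the row and column operations splitting off this elementary complex are not isometries; what one actually uses is that the Fuglede--Kadison determinant of a block-triangular morphism is the product of the determinants of its diagonal blocks, or equivalently L\"uck's multiplicativity applied to the short exact sequence of Hilbert chain complexes with subcomplex $C_*^{(2)}(X,\phi\circ f_*,\gamma\circ f_*,t)$ and quotient the elementary complex --- which is the tool you correctly invoke at the end.
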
 

If $N$ is a compact smooth $3$-manifold, then it follows from theorems due to Chapman and Cohen (see \cite{chapman,cohen}) that any two CW-structures on $N$ are simple homotopy equivalent. Proposition \ref{prop L2 torsion simple homotopy} thus implies that for any admissible triple $(\pi_1(N), \phi, \gamma)$, $T^{(2)}(N,\phi,\gamma)$ is a well-defined topological invariant of $N$.

\subsection{CW-complexes of the form $W \times S^1$}

Let $W$ be a finite CW-complex, $S^1$ the one-dimensional circle with its classical CW-complex structure (one $0$-cell and one $1$-cell). Let $X = W \times S^1$ be the product space , whose CW-structure is induced by the direct product.
We prove how to relate the $L^2$-Alexander torsions of $X$ to the Euler characteristic $\chi(W)$ of $W$, as a generalisation of \cite[Theorem 3.93 (4)]{Luc02}. Let $\pi_W = \pi_1(W)$, $T$ a fixed generator of $\pi_1(S^1) \cong \Z$ and $\pi_X = \pi_1(X) \cong \pi_W \times \Z$. We consider the natural inclusion $S^1 \hookrightarrow S^1 \times \{p_W\} \subset X$ (where $p_W$ is the chosen basepoint on $W$) and $i\colon \pi_1(S^1) \to \pi_X$ the induced group homomorphism.

\begin{proposition} \label{prop WS1}
Let $t>0$ and let $\phi: \pi_X \longrightarrow \Z$ and $ \gamma: \pi_X \to G$ such that $(\pi_X,\phi,\gamma)$ is an admissible triple. 
If $(\gamma \circ i)(T)$ has infinite order in $G$, then  
$C^{(2)}_*(X,\phi, \gamma,t)$ is weakly acyclic and of determinant class, and
$$T^{(2)}(X,\phi, \gamma)(t)
\ \dot{=} \ \max(1,t)^{- \chi(W)  |(\phi \circ i)(T)|}.$$
\end{proposition}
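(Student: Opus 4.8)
The plan is to exploit the product CW-structure of $X=W\times S^1$ to present $C^{(2)}_*(X,\phi,\gamma,t)$ as the algebraic mapping cone of a single weak automorphism, and then to read off both weak acyclicity and the torsion from it. Write $a=(\phi\circ i)(T)\in\Z$ and $g=(\gamma\circ i)(T)\in G$, and let $\pi_W=\pi_1(W)$. Since $S^1$ carries one $0$-cell and one $1$-cell $s$, each $p$-cell of $X$ is either $e\times\{pt\}$ for a $p$-cell $e$ of $W$, or $e\times s$ for a $(p-1)$-cell $e$ of $W$. First I would use this to identify, in each degree, $C^{(2)}_p(X,\phi,\gamma,t)=D_p\oplus D_{p-1}$, where $D_*=\ell^2(G)\otimes_{\Z[\pi_W]}C_*(\widetilde W,\Z)$ is the $\NN(G)$-cellular chain complex of $W$ for the restricted data $(\phi|_{\pi_W},\gamma|_{\pi_W})$, and where the boundary takes the block-triangular form $\begin{pmatrix}\partial^D_p & \pm u\\ 0 & \partial^D_{p-1}\end{pmatrix}$, with $u=\mathrm{Id}-t^{a}R_g$ the operator induced by the boundary $T-1$ of the $1$-cell of $S^1$ after applying $\kappa(\pi_X,\phi,\gamma,t)$. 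This exhibits $C^{(2)}_*(X,\phi,\gamma,t)$ as the mapping cone $\mathrm{Cone}(u)$ of the degreewise map $u$ on $D_*$.

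The decisive point I would establish next is that $u$ is a genuine chain self-map of $D_*$, i.e. that it commutes with $\partial^D$. This is exactly where the product structure (rather than a general mapping torus) enters: the generator $i(T)$ is central in $\pi_X=\pi_W\times\Z$, so $g=\gamma(i(T))$ commutes with every $\gamma(h)$, $h\in\pi_W$, and hence $R_g$ commutes with all the right-multiplication operators occurring in $\partial^D$. Granting this, Proposition \ref{prop operations det}(2) shows that $u$, and likewise each $u^{\oplus a_p}$ on $D_p$ (where $a_p$ is the number of $p$-cells of $W$), is injective, of determinant class, and satisfies $\det_{\NN(G)}(u)=\max(1,t^{a})$; thus $u$ is a weak automorphism of $D_*$.

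From here I would finish in two steps. For weak acyclicity and determinant class, the short exact sequence $0\to D_*\to C^{(2)}_*(X,\phi,\gamma,t)\to D_{*-1}\to 0$ has connecting homomorphisms equal, up to sign, to the induced maps $u_*$ on $H^{(2)}_*(D_*)$; since $u_*$ is a weak isomorphism, the long $L^2$-homology sequence forces $H^{(2)}_*(X)=0$. For the torsion, I would feed the same short exact sequence into the multiplicativity of the $L^2$-torsion (\cite[Theorem 3.35 (1)]{Luc02}): the contributions of $D_*$ and of its shift $D_{*-1}$ cancel against one another, and what survives is governed entirely by the determinants of $u$ on the chain modules, giving $T^{(2)}(X,\phi,\gamma)(t)=\prod_p\det_{\NN(G)}(u^{\oplus a_p})^{(-1)^{p+1}}=\prod_p\max(1,t^{a})^{(-1)^{p+1}a_p}=\max(1,t^{a})^{-\sum_p(-1)^pa_p}$. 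Since $\sum_p(-1)^pa_p=\chi(W)$ and $\max(1,t^{a})\ \dot{=}\ \max(1,t)^{|a|}$, this is precisely $\max(1,t)^{-\chi(W)|(\phi\circ i)(T)|}$.

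The hard part will be the torsion bookkeeping in the last step: because $D_*$ need not be weakly acyclic, the symbols $T^{(2)}(D_*)$ and $T^{(2)}(D_{*-1})$ are not literally defined, so one cannot naively divide them out. The clean way around this is to treat $C^{(2)}_*(X)$ as the mapping cone of the weak automorphism $u$ and to use the torsion of a weak automorphism that commutes with the differential; the crucial gain of the commuting (product) situation is that the relevant determinants are those of $u$ on the \emph{full} modules $D_p$, each equal to the honest value $\max(1,t^{a})^{a_p}$, rather than determinants of $u$ restricted to the homology submodules $H^{(2)}_p(D_*)$, which are not directly computable. Checking that the multiplicativity theorem indeed collapses to $\prod_p\det_{\NN(G)}(u^{\oplus a_p})^{(-1)^{p+1}}$ is the only genuinely technical point, and it is also where the hypothesis that $g=(\gamma\circ i)(T)$ has infinite order is indispensable.
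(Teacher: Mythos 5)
Your decomposition is exactly the one the paper uses: the product cell structure gives $C^{(2)}_p(X)=D_p\oplus D_{p-1}$ with a block-triangular differential whose off-diagonal blocks are copies of $u=\mathrm{Id}-t^{a}R_g$, Proposition \ref{prop operations det}\,(2) gives $\det_{\NN(G)}(u)=\max(1,t^{a})$ because $g=(\gamma\circ i)(T)$ has infinite order, and the alternating count of the cells of $W$ produces the exponent $-\chi(W)$. The only divergence is the final bookkeeping, and that is also the one place where your argument is not yet watertight: Theorem 3.35\,(1) of \cite{Luc02} applied to $0\to D_*\to C^{(2)}_*(X)\to D_{*-1}\to 0$ expresses the torsion through the long homology sequence, hence through the Fuglede--Kadison determinants of the induced maps $H^{(2)}_p(u)$ on homology, not through $\det_{\NN(G)}(u^{\oplus a_p})$ on the chain modules; since $D_*$ need not be weakly acyclic, the passage from homology-level to chain-level determinants is precisely the point you defer to the last paragraph, and it does not follow from the sum formula alone. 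The paper avoids this by invoking instead the generalization of Proposition \ref{prop tau chaine} to arbitrary dimension (the Turaev-style minors formula): choosing in each degree the index set $J$ corresponding to the cells $e^{k-1}_j\times a$, the distinguished square minors are exactly the diagonal blocks $u^{\oplus c_{k-1}}$, which are injective and of determinant class, so the formula simultaneously yields weak acyclicity of the total complex and the value $\prod_k\max(1,t^{a})^{(-1)^k c_{k-1}}=\max(1,t^{a})^{-\chi(W)}\ \dot{=}\ \max(1,t)^{-\chi(W)|a|}$. If you replace your appeal to Theorem 3.35\,(1) by that minors formula (or by the corresponding statement for the cone of a weak chain automorphism commuting with the differential, which is exactly where the centrality of $i(T)$ that you rightly emphasize gets used), your proof coincides with the paper's.
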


\begin{proof}
Let $n$ be the dimension of the CW-complex $W$, and $c_k$ the number of $k$-cells of $W$ of dimension $k$, for $0 \leqslant k \leqslant n$. One has immediately 
$\chi(W) = \sum_{k=0}^n (-1)^k c_k.$
Let us denote $e^k_i$ the $k$-cells of $W$, and $P,a$ the $0$-cell and the $1$-cell of $S^1$.
We consider the cellular chain complex of $\Z[ \pi_X]$-modules $C_*(\widetilde{X})$.
We fix lifts $\widetilde{e^k_i \times P}$ and  $\widetilde{e^{k-1}_j \times a}$ as bases of the $\Z[ \pi_X]$-modules (with $ 0 \leqslant i \leqslant c_k$, $0 \leqslant j \leqslant c_{k-1}$).

If we denote the boundary of the cells in $C_*(\widetilde{W})$ in the following way: 
$$\partial \left (\widetilde{e^k_i}\right ) = \sum_{j=1}^{c_{k-1}} \lambda_{k,i,j} \ g_{k,i,j} \ \widetilde{e^{k-1}_j}$$
where $\lambda_{k,i,j} \in \C, g_{k,i,j} \in \pi_W$,
then the boundary operators in $C_*(\widetilde{X})$ act as:
$$\partial \left (\widetilde{e^k_i \times P}\right ) = \sum_{j=1}^{c_{k-1}} \lambda_{k,i,j} \ g_{k,i,j} \ \widetilde{e^{k-1}_j \times P},$$
$$\partial \left (\widetilde{e^k_i \times a}\right ) =
(i(T)-1) \widetilde{e^k_i \times P} +
 \sum_{j=1}^{c_{k-1}} \lambda_{k,i,j} \ g_{k,i,j} \ \widetilde{e^{k-1}_j \times a}.$$
Thus the boundary operators are of the matricial form:
$$ \kbordermatrix{
 & \ \ \ \  & \widetilde{e^k_i \times P} \ \ \ \ &  \vrule   & & \widetilde{e^{k-1}_j \times a} & \\
 & & & \vrule & i(T)-1 & & 0 \\
\widetilde{e^{k-1}_j \times P} & & * \ \ \  & \vrule & & \ddots & \\
 & & & \vrule & 0 & & i(T)-1 \\
 \hline
 & & & \vrule & & & \\
\widetilde{e^{k-2}_l \times a} & & 0 & \vrule & & * & \\
 & & & \vrule & & & 
},$$
where the square upper right block is of size $c_{k-1}$.

From the generalisation of Proposition \ref{prop tau chaine} to any dimension, one can compute $T^{(2)}(X,\phi, \gamma)(t)$ from the Fuglede-Kadison determinants of the corresponding upper right block operators, which are injective and of determinant class as long as \\
$(\gamma \circ i)(T)$ has infinite order in $G$. The formula is then a consequence of Proposition \ref{prop operations det} (1) and (2) and the fact that
$\chi(W) = \sum_{k=0}^n (-1)^k c_k.$
\end{proof}

As consequences of 
Proposition \ref{prop WS1} we compute the $L^2$-Alexander torsions of the solid torus and the $2$-torus.

\begin{corollary} \label{torsion solid torus}
For $c$  a generator of $\pi_1(S^1 \times D^2)$, if $\gamma(c)$ is of infinite order in $G$, then $C_*^{(2)}(S^1 \times D^2, \phi, \gamma)(t)$ is weakly acyclic and of determinant class for all $t>0$, and its $L^2$-Alexander torsion is
$$T^{(2)}(S^1\times D^2, \phi, \gamma)(t) \ \dot{=} \ \dfrac{1}{\max(1,t)^{|\phi(c)|}}.$$
\end{corollary}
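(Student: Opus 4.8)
The plan is to recognize the solid torus as a product of the form $W \times S^1$ to which Proposition \ref{prop WS1} applies verbatim, rather than to run through a fresh chain-complex computation. Writing $S^1 \times D^2 \cong D^2 \times S^1$, I would set $W = D^2$ and equip the solid torus with the associated product CW-structure. Since $D^2$ is contractible one has $\chi(D^2) = 1$; any CW-structure on $D^2$ (for instance one $0$-cell, one $1$-cell and one $2$-cell, which gives $1-1+1=1$) yields the same value, and as the formula in Proposition \ref{prop WS1} depends on $W$ only through $\chi(W)$, the particular choice is immaterial.

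Next I would match up the data. The inclusion of the circle factor induces $i\colon \pi_1(S^1) \to \pi_1(D^2 \times S^1) \cong \Z$ sending the generator $T$ to a generator of $\pi_1$; since $\pi_1(S^1 \times D^2) \cong \Z$ is generated by $c$, we have $i(T) = c^{\pm 1}$. Consequently $(\phi \circ i)(T) = \pm\phi(c)$ and $(\gamma \circ i)(T) = \gamma(c)^{\pm 1}$, so that $|(\phi \circ i)(T)| = |\phi(c)|$ and, since an element and its inverse have the same order, $(\gamma \circ i)(T)$ has infinite order in $G$ exactly when $\gamma(c)$ does. Thus the hypothesis of the corollary is precisely the hypothesis required to invoke Proposition \ref{prop WS1}.

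Applying Proposition \ref{prop WS1} then yields at once that $C_*^{(2)}(S^1 \times D^2, \phi, \gamma, t)$ is weakly acyclic and of determinant class, together with
$$T^{(2)}(S^1 \times D^2, \phi, \gamma)(t) \ \dot{=} \ \max(1,t)^{-\chi(D^2)\,|(\phi\circ i)(T)|} \ = \ \max(1,t)^{-|\phi(c)|} \ = \ \dfrac{1}{\max(1,t)^{|\phi(c)|}},$$
which is the claimed formula. The only point needing care — and the main, albeit minor, obstacle — is the legitimacy of computing with the convenient product CW-structure: since the solid torus is a compact smooth $3$-manifold, any two of its CW-structures are simple homotopy equivalent by Chapman--Cohen, so by Proposition \ref{prop L2 torsion simple homotopy} the torsion up to $\dot{=}$ is independent of the structure chosen, and I am free to use the product one. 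Everything else is a direct substitution $W = D^2$ into Proposition \ref{prop WS1}.
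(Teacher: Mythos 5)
Your proof is correct and follows exactly the route the paper intends: the corollary is stated as an immediate consequence of Proposition \ref{prop WS1} applied with $W=D^2$, $\chi(D^2)=1$, and $i(T)=c^{\pm 1}$, which is precisely your argument. The additional remark on independence of the CW-structure via Chapman--Cohen and Proposition \ref{prop L2 torsion simple homotopy} is consistent with the paper's own conventions and is a reasonable (if strictly optional) point of care.
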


\begin{corollary} \label{torsion torus}
If $\gamma(\pi_1(S^1 \times S^1))$ is infinite, 
then
$C_*^{(2)}(S^1 \times S^1, \phi, \gamma)(t)$ is weakly acyclic and of determinant class for all $t>0$, and its $L^2$-Alexander torsion is
$$T^{(2)}(S^1\times S^1, \phi, \gamma)(t) \ \dot{=} \ 1.$$
\end{corollary}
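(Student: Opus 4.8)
The plan is to realize the $2$-torus as a product $W \times S^1$ with $W = S^1$ and to apply Proposition \ref{prop WS1}, observing that $\chi(S^1) = 0$ so that the exponent appearing in that formula vanishes. The only delicate point is that the hypothesis of the Corollary, namely that $\gamma(\pi_1(S^1 \times S^1))$ is infinite, is weaker than the hypothesis of Proposition \ref{prop WS1}, which requires the image under $\gamma$ of the chosen $S^1$-factor to have infinite order in $G$. Hence the real content is to choose the product structure appropriately.

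First I would record that $\pi_1(S^1 \times S^1) \cong \Z^2$ and that $H := \gamma(\Z^2)$, being a quotient of $\Z^2$, is a finitely generated abelian group. Since $H$ is infinite, its free rank is positive, so it contains an element of infinite order; hence there is some $v \in \Z^2$ with $\gamma(v)$ of infinite order. Writing $v = d\,w$ with $d \geqslant 1$ an integer and $w \in \Z^2$ primitive, the relation $\gamma(v) = \gamma(w)^d$ forces $\gamma(w)$ to have infinite order as well. I then set $T := w$, a primitive class with $\gamma(T)$ of infinite order.

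Next I would use the fact that every primitive class $T \in \Z^2 = \pi_1(S^1 \times S^1)$ is realized as the second factor of some product decomposition of the torus: completing $T$ to a basis of $\Z^2$ yields a matrix in $\mathrm{GL}_2(\Z)$, hence a self-diffeomorphism of $S^1 \times S^1$ sending the standard second-factor generator to $T$. By Proposition \ref{prop L2 torsion simple homotopy} and the invariance of the torsion under the choice of smooth structure, I may compute with this new product structure $W \times S^1$, for which $W = S^1$ and the inclusion $i$ of the $S^1$-factor satisfies $i(T) = T$, so that $(\gamma \circ i)(T) = \gamma(T)$ has infinite order in $G$.

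Finally, I would invoke Proposition \ref{prop WS1} directly: its hypothesis is now met, so $C_*^{(2)}(S^1 \times S^1, \phi, \gamma, t)$ is weakly acyclic and of determinant class for all $t > 0$, and
$$T^{(2)}(S^1 \times S^1, \phi, \gamma)(t) \ \dot{=} \ \max(1,t)^{-\chi(S^1)\,|(\phi \circ i)(T)|} = \max(1,t)^0 = 1,$$
since $\chi(S^1) = 0$. The main obstacle is precisely the bookkeeping of the second and third paragraphs: ensuring that the weaker infiniteness hypothesis can always be upgraded, via a suitable choice of product structure, to the infinite-order hypothesis required by Proposition \ref{prop WS1}. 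Once that reduction is in place the computation is immediate, as the value of $(\phi \circ i)(T)$ is irrelevant because it is multiplied by $\chi(S^1) = 0$.
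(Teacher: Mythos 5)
Your proof is correct and follows essentially the same route as the paper: both reduce the statement to Proposition \ref{prop WS1} applied to a product decomposition $S^1 \times W$ with $W=S^1$ and $\chi(W)=0$. The paper merely takes a shortcut in locating a suitable circle factor: since $\gamma(\pi_1(S^1\times S^1))$ is abelian and generated by the images of the two standard generators $T,S$, at least one of $\gamma(T),\gamma(S)$ must already have infinite order, so the detour through an arbitrary infinite-order element, its primitivization, and a $\mathrm{GL}_2(\Z)$ self-diffeomorphism is not needed (though it is perfectly valid).
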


\begin{proof}
Let $X = S^1 \times S^1$. Its group $\pi_1(X) \cong  \Z^2$ admits a presentation of the form $\langle T,S | TS=ST \rangle$. Since $\gamma(\pi_1(X))$ is infinite, then at least one of $T,S$ has an image by $\gamma$ of infinite order in $G$, for instance $T$. Then $X$ is homeomorphic to $S^1 \times W$ where $W$ is the circle corresponding to $S$. Proposition \ref{prop WS1} concludes the proof. 
\end{proof}

\subsection{Gluing formulas}

Let $X, A, B, V$ be compact connected topological spaces, such that $X = A \cup B$ and $V= A \cap B$. Assume that these four spaces are endowed with structures of finite CW-complexes such that the inclusions $ V \overset{I_A}{\hookrightarrow} A$, $V \overset{I_B}{\hookrightarrow} B$, $A \overset{J_A}{\hookrightarrow} X$, $B \overset{J_B}{\hookrightarrow} X$ and $V \overset{I}{\hookrightarrow} X$ all map a $k$-cell to a $k$-cell (which means that the CW-structure of $X$ is constructed from those of $A$ and $B$), and such that $I = J_A \circ I_A = J_B \circ I_B$. 
Let us denote  $ \pi_V \overset{i_A}{\to} \pi_A$, $\pi_V \overset{i_B}{\to} \pi_B$, $\pi_A \overset{j_A}{\to} \pi_X$, $\pi_B \overset{j_B}{\to} \pi_X$ and $\pi_V \overset{i}{\to} \pi_X$ the group homomorphisms induced by $I_A, I_B, J_A, J_B, I$. Remark that $i = j_A \circ i_A = j_B \circ i_B$.
These numerous maps are all written on a diagram below for clarity.

\begin{tikzpicture}
[description/.style={fill=white,inner sep=2pt}] 
\matrix(a)[matrix of math nodes, row sep=2em, column sep=2.5em, text height=1.5ex, text depth=0.25ex] 
{ & A\\ V & & X\\ & B\\}; 
\path[->](a-2-1) edge node[below]{$I_B$} (a-3-2); 
\path[->](a-2-1) edge node[above]{$I_A$} (a-1-2);  
\path[->](a-1-2) edge node[above]{$J_A$} (a-2-3); 
\path[->](a-3-2) edge node[below]{$J_B$} (a-2-3); 
\path[->](a-2-1) edge node[above]{$I$} (a-2-3);  
\begin{scope}[xshift=6cm,rotate=0,scale=1]
[description/.style={fill=white,inner sep=2pt}] 
\matrix(a)[matrix of math nodes, row sep=2em, column sep=2.5em, text height=1.5ex, text depth=0.25ex] 
{ & \pi_1(A)\\ \pi_1(V) & & \pi_1(X) & G\\ & \pi_1(B) & & \Z\\}; 
\path[->](a-2-1) edge node[below]{$i_B$} (a-3-2); 
\path[->](a-2-1) edge node[above]{$i_A$} (a-1-2);  
\path[->](a-1-2) edge node[above]{$j_A$} (a-2-3); 
\path[->](a-3-2) edge node[below]{$j_B$} (a-2-3); 
\path[->](a-2-1) edge node[above]{$i$} (a-2-3);
\path[->](a-2-3) edge node[above]{$\gamma$} (a-2-4);
\path[->](a-2-3) edge node[below]{$\phi$} (a-3-4);
\path[->, dotted](a-2-4) edge  (a-3-4);
\end{scope}
\end{tikzpicture}

\begin{proposition}[Gluing formula] \label{prop mayer vietoris}
Let $(\pi_X,\phi\colon \pi_X \to \Z,\gamma\colon \pi_X \to G)$ be an admissible triple, and $t>0$. 
If the three $\NN(G)$-cellular chain complexes 
$$C^{(2)}_*(V,\phi \circ i, \gamma \circ i,t), \ C^{(2)}_*(A,\phi \circ j_A, \gamma \circ j_A,t), \
C^{(2)}_*(B,\phi \circ j_B, \gamma \circ j_B,t)$$ are
weakly acyclic and of determinant class, then
  $C^{(2)}_*(X,\phi, \gamma,t)$ is weakly acyclic and of determinant class as well, and
$$T^{(2)}(X,\phi, \gamma)(t)
 \ \dot{=} \ \dfrac{T^{(2)}(A,\phi \circ j_A, \gamma \circ j_A)(t) \cdot 
T^{(2)}(B,\phi \circ j_B, \gamma \circ j_B)(t)}{T^{(2)}(V,\phi \circ i, \gamma \circ i)(t)}.
$$
\end{proposition}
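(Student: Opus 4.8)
The plan is to realise the decomposition $X = A \cup B$, $V = A \cap B$ as a Mayer--Vietoris short exact sequence of finite Hilbert $\NN(G)$-chain complexes and then to feed it into the multiplicativity of the $L^2$-torsion under short exact sequences, namely \cite[Theorem 3.35 (1)]{Luc02}.

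First I would set up the algebra. Because the inclusions $I_A, I_B, J_A, J_B$ send $k$-cells to $k$-cells and the CW-structure of $X$ is obtained by gluing those of $A$ and $B$ along $V$, in each degree the cells of $X$ are exactly the cells of $A$ together with those of $B$, amalgamated over the cells of $V$. Letting $\overline{A}, \overline{B}, \overline{V}$ denote the covers of $A, B, V$ pulled back from the universal cover $\widetilde{X} \to X$, this gives the classical short exact sequence of $\Z[\pi_X]$-modules
$$0 \to C_*(\overline{V}) \xrightarrow{(I_{A*},\, I_{B*})} C_*(\overline{A}) \oplus C_*(\overline{B}) \xrightarrow{J_{A*} - J_{B*}} C_*(\widetilde{X}) \to 0,$$
which is exact degreewise since it is a splitting of the cell basis. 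The crucial compatibility is that $\kappa(\pi_V, \phi\circ i, \gamma\circ i, t)(g) = \kappa(\pi_X, \phi, \gamma, t)(i(g))$, and likewise for $A$ and $B$ through $j_A, j_B$; this is precisely the relation $i = j_A \circ i_A = j_B \circ i_B$ together with the factorisations $\phi \circ j_A, \gamma \circ j_A$, etc. Hence applying $\ell^2(G) \otimes_{\Z[\pi_X]} (-)$ and using the standard identifications $\ell^2(G) \otimes_{\Z[\pi_X]} C_*(\overline{A}) \cong \ell^2(G) \otimes_{\Z[\pi_A]} C_*(\widetilde{A}) = C^{(2)}_*(A,\phi\circ j_A,\gamma\circ j_A,t)$ (and similarly for $B$ and $V$) yields a short exact sequence of finite Hilbert $\NN(G)$-chain complexes
$$0 \to C^{(2)}_*(V) \to C^{(2)}_*(A) \oplus C^{(2)}_*(B) \to C^{(2)}_*(X) \to 0.$$

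Next I would invoke \cite[Theorem 3.35 (1)]{Luc02}: in a short exact sequence of finite Hilbert $\NN(G)$-chain complexes for which two of the three terms are weakly acyclic and of determinant class, the third is as well and the $L^2$-torsions multiply. The hypotheses give that $C^{(2)}_*(V)$, $C^{(2)}_*(A)$ and $C^{(2)}_*(B)$ are weakly acyclic and of determinant class, so the middle term $C^{(2)}_*(A) \oplus C^{(2)}_*(B)$ is too, with torsion $T^{(2)}(A)\cdot T^{(2)}(B)$ by Proposition \ref{prop operations det} (1) applied in each degree. Therefore $C^{(2)}_*(X)$ is weakly acyclic and of determinant class, and since all $L^2$-homology modules vanish the long weakly exact homology sequence contributes no correction term, leaving
$$T^{(2)}(A)\cdot T^{(2)}(B) = T^{(2)}(V)\cdot T^{(2)}(X),$$
which rearranges to the claimed formula up to $\dot{=}$.

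The main obstacle will be the first step: verifying rigorously that $\ell^2(G)\otimes_{\Z[\pi_X]}(-)$ sends the Mayer--Vietoris sequence of $\Z[\pi_X]$-modules to an exact sequence of Hilbert $\NN(G)$-chain complexes, and that the induced maps are genuine $\NN(G)$-morphisms carrying the correct $\kappa$-twisted coefficient systems. Degreewise exactness is combinatorial, but the identification of the pulled-back coefficient systems through $i, j_A, j_B$ is exactly where admissibility of the triple is used; these technical verifications are carried out in \cite{BAthesis}.
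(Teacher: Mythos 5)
Your proposal is correct and is essentially the argument the paper intends: the paper explicitly presents Proposition \ref{prop mayer vietoris} as an application of the multiplicativity of the classical $L^2$-torsion \cite[Theorem 3.35 (1)]{Luc02} to the Mayer--Vietoris short exact sequence of finite Hilbert $\NN(G)$-chain complexes arising from the cellular decomposition $X = A \cup B$, $V = A \cap B$, with the weak acyclicity hypotheses killing the long-homology correction term. The technical verifications you single out (degreewise split exactness surviving the tensor product $\ell^2(G)\otimes_{\Z[\pi_X]}(-)$ and the compatibility of the $\kappa$-twisted coefficient systems through $i$, $j_A$, $j_B$) are precisely what the paper defers to \cite[Theorem 3.1]{BAthesis}.
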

This result is a direct generalization of \cite[Theorem 3.35 (1)]{Luc02} and was announced in \cite{DFL}.
The technical details of the proof can be found in \cite[Theorem 3.1]{BAthesis}. 
One can apply the gluing formula to the particular case of toroidal gluings of $3$-manifolds. The following result first appeared in \cite[Theorem 5.5]{DFL}, and we will illustrate how it can be seen as a consequence of Proposition \ref{prop mayer vietoris}. For details we refer to \cite[Proposition 4.1]{BAthesis}.

\begin{proposition}[Toroidal gluing formula] \label{prop t2 JSJ}
Let $N$ be a $3$-manifold and $\phi \in Hom(\pi_1(N);\Z)$. 
Let $T_1, \ldots, T_k$ be disjoint tori in $M$ and $N_1, \ldots, N_l$ the connected components of $M$ minus small tubular open neighbourhoods of the tori $T_i$.
For $i=1, \ldots, l$, we denote by $\iota_i: N_i \to N$ and $\tau_j: T_j \to N$ the inclusions. 
Let $t>0$ and let $\gamma: \pi_1(N) \to G$ be a homomorphism such that 
$(\pi_1(N),\phi,\gamma)$ is an admissible triple and
the restriction $\gamma \circ (\tau_j)_*$   to each $\pi_1(T_j)$ has infinite image. 
If $C_*^{(2)}(N_i, \phi \circ (\iota_i)_* , \gamma \circ (\iota_i)_*,t)$ is weakly acyclic and of determinant class for all $N_i$, then 
$C_*^{(2)}(N,\phi,\gamma,t)$ is weakly acyclic and of determinant class and 
$$T^{(2)}(N,\phi,\gamma)(t) \ \dot{=}\  \prod_{i=1}^l T^{(2)}(N_i, \phi \circ (\iota_i)_* , \gamma \circ (\iota_i)_*).$$
\end{proposition}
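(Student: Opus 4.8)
The plan is to reconstruct $N$ from its pieces $N_1, \ldots, N_l$ by gluing them back together one at a time along the tori $T_1, \ldots, T_k$, applying the gluing formula of Proposition \ref{prop mayer vietoris} at each step. Since $N$ is connected, its dual graph (a vertex for each $N_i$, an edge for each $T_j$) is connected, so I can order the pieces as $N_1, \ldots, N_l$ along a spanning tree in such a way that each $N_i$ (for $i \geqslant 2$) meets the partial union $A_{i-1} := N_1 \cup \ldots \cup N_{i-1}$ along a nonempty disjoint union $V_i$ of some of the tori $T_j$, and so that each $A_i := A_{i-1} \cup N_i$ is connected. I would then prove by induction on $i$ that $C_*^{(2)}(A_i, \phi \circ (\cdot)_*, \gamma \circ (\cdot)_*, t)$ is weakly acyclic and of determinant class and that $T^{(2)}(A_i, \ldots)(t) \ \dot{=}\ \prod_{m=1}^i T^{(2)}(N_m, \ldots)(t)$; the base case $A_1 = N_1$ is the hypothesis, and $A_l = N$ yields the statement.

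For the inductive step I would apply Proposition \ref{prop mayer vietoris} to the decomposition $A_i = A_{i-1} \cup N_i$ with $A_{i-1} \cap N_i = V_i$. The inductive hypothesis gives weak acyclicity and determinant class for $A_{i-1}$, the hypothesis of the Proposition gives them for $N_i$, and each torus composing $V_i$ satisfies the same by Corollary \ref{torsion torus}, since by assumption the restriction of $\gamma$ to each $\pi_1(T_j)$ has infinite image. The conclusion of Proposition \ref{prop mayer vietoris} then yields both the weak acyclicity and determinant class of $A_i$ and the identity $T^{(2)}(A_i)(t) \ \dot{=}\ T^{(2)}(A_{i-1})(t) \cdot T^{(2)}(N_i)(t) / T^{(2)}(V_i)(t)$. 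The crucial simplification is that Corollary \ref{torsion torus} gives $T^{(2)}(T_j, \ldots)(t) \ \dot{=}\ 1$ for every gluing torus, so, using the multiplicativity of the Fuglede--Kadison determinant over direct sums (Proposition \ref{prop operations det}(1)), the denominator $T^{(2)}(V_i)(t) \ \dot{=}\ 1$ disappears and the inductive formula follows.

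Two points require care. First, Proposition \ref{prop mayer vietoris} is stated for connected spaces $A, B, V$, whereas $V_i$ is in general a disjoint union of several tori; this is exactly where one uses Proposition \ref{prop operations det}(1), which makes the $L^2$-torsion of a disjoint union the product of the torsions of its components, so that the gluing formula extends verbatim with $T^{(2)}(V_i) = \prod_j T^{(2)}(T_j) \ \dot{=}\ 1$. Second, one must fix compatible finite CW-structures on all the $N_i$, the tori, and their unions so that every inclusion sends $k$-cells to $k$-cells (the hypothesis of Proposition \ref{prop mayer vietoris}); this, together with the routine verification that the restricted triples $(\pi_1(N_i), \phi \circ (\iota_i)_*, \gamma \circ (\iota_i)_*)$ and $(\pi_1(T_j), \phi \circ (\tau_j)_*, \gamma \circ (\tau_j)_*)$ remain admissible (immediate from $\phi = \psi \circ \gamma$), is the main technical obstacle, and the part I would defer to the CW-theoretic constructions of \cite{BAthesis}. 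The whole geometric content of the proof is concentrated in the vanishing $T^{(2)}(\text{torus}) \ \dot{=}\ 1$ of Corollary \ref{torsion torus}.
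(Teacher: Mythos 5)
Your proof is correct and follows essentially the same route as the paper: iterate the gluing formula of Proposition \ref{prop mayer vietoris}, with Corollary \ref{torsion torus} making every gluing torus contribute trivially. The only (minor) difference is the induction scheme --- the paper glues back torus by torus rather than piece by piece along a spanning tree, so each intersection stays a single connected torus and the disjoint-union extension of the gluing formula that you correctly flag is not needed.
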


\begin{proof}
Let us first assume that $k=1$ and $l=2$. We apply Proposition \ref{prop mayer vietoris} with $A=N_1, B=N_2, V=T_1, X=N$. If we assume that
$(\gamma \circ (\tau_1)_*)(\pi_1(T_1))$ is infinite, then by Corollary \ref{torsion torus}, $C_*^{(2)}(T_1, \phi \circ (\tau_1)_* , \gamma \circ (\tau_1)_*,t)$ is weakly acyclic, of determinant class, and of $L^2$-torsion equal to $1$.
Besides, we assumed that $C_*^{(2)}(N_i, \phi \circ (\iota_i)_* , \gamma \circ (\iota_i)_*,t)$ is weakly acyclic and of determinant class for $i=1,2$. 
The result follows.
For bigger $k$ and $l$ one just applies the previous reasoning by induction on $k$, torus by torus. Note that rigorously speaking, the base points of the fundamental groups change at each step but this does not change the final formula. 
\end{proof}

\section{Dehn surgery formulas}
\label{sec:dehn}

We apply the gluing formula of Proposition \ref{prop mayer vietoris} to the case of Dehn surgery, where we glue a solid torus on a toroidal boundary component of a $3$-manifold. We start by recalling the definition of Dehn surgery.

\subsection{Dehn Surgery}

We follow \cite[Section 9F]{Rol}.
Let $M$ be a $3$-manifold and let $T_1, \ldots, T_n$ be $2$-tori that are connected components of $\partial M$. For each $i=1, \ldots, n$, specify a simple closed curve $J_i$ on each $T_i$. Let 
$$M' = M \cup_h \left ((S^1\times D^2) \sqcup \ldots \sqcup (S^1\times D^2)\right )$$
where $h$ is an union of homeomorphisms $h_i: S^1 \times S^1 \to T_i$, each of which taking a meridian curve $m_i$ of $\partial (S^1 \times D^2)$ to the curve $J_i$.
Up to homeomorphism $M'$ does not depend on the choice of $h$. We say that $M'$ is obtained by \textit{Dehn Filling on $M$}.
\textit{Dehn surgery} refers to the more general process of drilling out links in $M$ and then filling them along certain curves. We will mostly be interested in the Dehn filling process.

When $M$ is the exterior of an oriented link $L= L_1 \cup \ldots \cup \ldots L_n \cup L_{n+1} \cup \ldots \cup L_c$ in $S^3$ and $T_i = \partial V(L_i)$ for $i=1, \ldots, n$, each $L_i$ has a preferred meridian-longitude pair $(\mu_{i},\lambda_{i})$. We only need to specify the homotopy class of $J_i$ in $T_i$, described by two relatively prime integers $p_i, q_i$:
$$[J_i] = p_i [\mu_i] + q_i [\lambda_i].$$
We call $p_i/q_i \in \Q \cup \{ \infty \}$ the \textit{surgery coefficient} associated with the component $L_i$.

\begin{example}
A $p/q$-surgery on the trivial knot yields the lens space $L(p,q)$.

In particular, a $0$-surgery on the trivial knot yields $S^2 \times S^1$, and a $\pm 1/n$-surgery, $n\in \N$ on the trivial knot yields $S^3$.

A $\infty$-surgery on any knot yields $S^3$ (this corresponds to the canonical filling of the knotted tunnel).
\end{example}

\subsection{The Dehn Surgery formula}

Let $M$ be a compact connected $3$-manifold with non-empty toroidal boundary, $B$ a solid torus, $T$ a boundary part of $M$, and $J$ a simple closed curve on $T$. Let $N$ be the manifold obtained by doing a Dehn filling on $M$ for the curve $J$ on the boundary part $T$.
One has $N = M \cup B$ and $T= M \cap B$. Let $J'$ be a simple closed curve on $T$ such that the classes of $J$ and $J'$ form a system of generators of $\pi_1(T) \cong \Z^2$. We can assume that $J$ and $J'$ intersect on a single point $P$, which will be the base point for all the following fundamental groups.
We choose a CW-structure on $M$ and $T$ such that $P$ is a $0$-cell and $J$ and $J'$ are $1$-cells. For constructing the CW-structure of $B$ we choose a $2$-cell $D$ bounded by $J$, and a $3$-cell $\rho$ glued in the usual way to close the solid torus. Thus $J'$ and the core of $B$ have the same homotopy class in $\pi_1(B)$. We can thus see $J$ as a meridian of $B$ and $J'$ as a longitude of $B$. Finally we provide $N$ with the CW-structure composed of those of $M, T$ and $B$.
Let $\pi_M = \pi_1(M)$, $\pi_N = \pi_1\left (N\right )$ and $c$ the homotopy class of the core of $B$ in $\pi_1(B)$. Then the inclusion $J_M: M  \subset N$ induces a quotient group homomorphism $Q: \pi \twoheadrightarrow \pi_N$ (whose kernel is normally generated by $[J]$), and the inclusion $J_B: B \subset N$ induces a 
group homomorphism $\iota: c^{\Z} \to \pi_N$. The following diagram should make everything clearer.

\begin{tikzpicture}
[description/.style={fill=white,inner sep=2pt}] 
\matrix(a)[matrix of math nodes, row sep=2em, column sep=2.5em, text height=1.5ex, text depth=0.25ex] 
{ & M\\ T & & N\\ & B\\}; 
\path[->](a-2-1) edge node[below]{$I_B$} (a-3-2); 
\path[->](a-2-1) edge node[above]{$I_M$} (a-1-2);  
\path[->](a-1-2) edge node[above]{$J_M$} (a-2-3); 
\path[->](a-3-2) edge node[below]{$J_B$} (a-2-3); 
\path[->](a-2-1) edge node[above]{$I$} (a-2-3);  

\begin{scope}[xshift=6cm,rotate=0,scale=1]
[description/.style={fill=white,inner sep=2pt}] 
\matrix(a)[matrix of math nodes, row sep=2em, column sep=2em, text height=1.5ex, text depth=0.25ex] 
{ & \pi_1(M)\\ \pi_1(T) & & \pi_1\left (N\right ) & G\\ & \pi_1(B)\cong c^{\Z} & & \Z\\}; 
\path[->](a-2-1) edge node[below]{$i_B$} (a-3-2); 
\path[->](a-2-1) edge node[above]{$i_M$} (a-1-2);  
\path[->](a-1-2) edge node[above]{$Q$} (a-2-3); 
\path[->](a-3-2) edge node[below]{$\iota$} (a-2-3); 
\path[->](a-2-1) edge node[above]{$i$} (a-2-3);
\path[->](a-2-3) edge node[above]{$\gamma$} (a-2-4);
\path[->](a-2-3) edge node[below]{$\phi$} (a-3-4);
\path[->, dotted](a-2-4) edge  (a-3-4);
\end{scope}
\end{tikzpicture}

We can now state the general Dehn surgery formula for $L^2$-Alexander torsions:
\begin{proposition}[\cite{BAthesis}, Theorem 3.6] \label{prop surgery}
Let $\phi: \pi_N \to \Z$ and $\gamma: \pi_N \to G$ be group homomorphisms such that $(\pi_N,\phi,\gamma)$ forms an admissible triple. For all $t>0$, if $\gamma(\iota(c))$ is  of infinite order in $G$ and if
$C_*^{(2)}(M,\phi \circ Q,\gamma \circ Q)(t)$ is weakly acyclic and of determinant class, then 
$C_*^{(2)}(N,\phi,\gamma)(t)$ has the same properties and
$$
T^{(2)}(N,\phi,\gamma)(t) \ \dot{=} \ 
\dfrac{T^{(2)}(M,\phi \circ Q,\gamma \circ Q)(t)}
{\max(1,t)^{|\phi(\iota(c))|}}.
$$
\end{proposition}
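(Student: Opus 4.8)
The plan is to recognize this statement as a direct application of the gluing formula (Proposition \ref{prop mayer vietoris}) to the Dehn filling decomposition $N = M \cup B$ with $T = M \cap B$, followed by an evaluation of the two elementary pieces using Corollaries \ref{torsion torus} and \ref{torsion solid torus}. I would set $A = M$, keep $B$ as the glued solid torus, and set $V = T$. With the CW-structure described just before the statement (where $P$ is a $0$-cell, $J$ and $J'$ are $1$-cells, and $B$ is built from a $2$-cell $D$ on $J$ together with a $3$-cell $\rho$), all the relevant inclusions are cellular and send $k$-cells to $k$-cells, so the CW-hypotheses of Proposition \ref{prop mayer vietoris} hold, with $j_A = Q$, $j_B = \iota$, and $i = \iota \circ i_B$.

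Next I would verify the three weak-acyclicity and determinant-class hypotheses demanded by the gluing formula. The piece for $M$ is weakly acyclic and of determinant class by assumption. For the torus piece, I would note that the longitude class $[J']$ generates $\pi_1(T) \cong \Z^2$ together with $[J]$, and is carried by $i = \iota \circ i_B$ to the core class $\iota(c)$; since $\gamma(\iota(c))$ has infinite order, $\gamma(i(\pi_1(T)))$ is infinite, so Corollary \ref{torsion torus} shows that $C_*^{(2)}(T,\phi\circ i,\gamma\circ i,t)$ is weakly acyclic and of determinant class with torsion $\dot{=} 1$. For the solid torus, the core class $c$ generates $\pi_1(B) \cong \Z$ and $(\gamma\circ\iota)(c) = \gamma(\iota(c))$ has infinite order, so Corollary \ref{torsion solid torus} applies and gives weak acyclicity, determinant class, and
$$ T^{(2)}(B,\phi\circ\iota,\gamma\circ\iota)(t) \ \dot{=} \ \frac{1}{\max(1,t)^{|\phi(\iota(c))|}}. $$

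Finally I would feed these three facts into Proposition \ref{prop mayer vietoris}, which simultaneously certifies that $C_*^{(2)}(N,\phi,\gamma,t)$ is weakly acyclic and of determinant class and produces
$$ T^{(2)}(N,\phi,\gamma)(t) \ \dot{=} \ \frac{T^{(2)}(M,\phi\circ Q,\gamma\circ Q)(t)\cdot T^{(2)}(B,\phi\circ\iota,\gamma\circ\iota)(t)}{T^{(2)}(T,\phi\circ i,\gamma\circ i)(t)}. $$
Substituting the torus factor $\dot{=} 1$ in the denominator and the solid-torus factor $\dot{=} \max(1,t)^{-|\phi(\iota(c))|}$ in the numerator yields the claimed identity at once.

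I do not expect a deep obstacle: the argument is essentially bookkeeping layered on the gluing formula. The one point that genuinely requires care is tracking homotopy classes through the inclusion maps, namely confirming that $[J']$ is carried to the core class $\iota(c)$ and that $c$ is an honest generator of $\pi_1(B)$, so that the single hypothesis ``$\gamma(\iota(c))$ has infinite order'' simultaneously supplies the infinite-image condition of Corollary \ref{torsion torus} and the infinite-order-generator condition of Corollary \ref{torsion solid torus}, and so that the exponent $|\phi(\iota(c))|$ emerging from Corollary \ref{torsion solid torus} is exactly the one appearing in the statement.
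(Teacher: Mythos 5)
Your proposal is correct and follows essentially the same route as the paper's own proof: apply the gluing formula of Proposition \ref{prop mayer vietoris} to $N = M \cup B$ with $V = T$, evaluate the solid-torus piece via Corollary \ref{torsion solid torus} and the torus piece via Corollary \ref{torsion torus}, using that $\gamma(\iota(c))$ has infinite order to verify both hypotheses. The only cosmetic difference is that the paper justifies the infiniteness of $\gamma(i(\pi_1(T)))$ by the identity $\gamma(i(\pi_1(T))) = \gamma(\iota(\pi_1(B)))$ rather than by tracking $[J']$ to the core class, which amounts to the same observation.
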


\begin{proof}
Since $\gamma(\iota(c))$ is of infinite order in $G$,
 $C^{(2)}_*(B,\phi \circ \iota, \gamma \circ \iota,t)$ is weakly acylic and of determinant class by Corollary \ref{torsion solid torus}, and 
 $$T^{(2)}(B,\phi \circ \iota, \gamma \circ \iota)(t)  \ \dot{=} \  \dfrac{1}{\max(1,t)^{|\phi(\iota(c))|}}.$$

Likewise, $\gamma(i(\pi_1(T))) = \gamma(\iota(\pi_1(B)))$ is an infinite subgroup of $G$, thus, by Corollary \ref{torsion torus},
$C^{(2)}_*(T,\phi \circ i, \gamma \circ i,t)$ is weakly acylic and of determinant class, and 
$$T^{(2)}(T,\phi \circ i, \gamma \circ i)(t) = 1.$$

Finally, since $C^{(2)}_*(M,\phi \circ Q, \gamma \circ Q,t)$ is assumed weakly acyclic and of determinant class, it follows from Proposition \ref{prop mayer vietoris} that
$C_*^{(2)}(N,\phi,\gamma)(t)$ is weakly acyclic and of determinant class, and
$$
T^{(2)}(N,\phi,\gamma)(t)  \ \dot{=} \  
\dfrac{T^{(2)}(M,\phi \circ Q,\gamma \circ Q)(t)}
{\max(1,t)^{|\phi(\iota(c))|}}.
$$
\end{proof}

Let $M$ be the exterior of an oriented link $L= L_1 \cup \ldots \cup L_c$ in $S^3$ and $T = \partial V(L_c)$. 
Let $(\mu,\lambda)$ be a preferred meridian-longitude pair for $T$. 
We describe a simple closed curve $J$ on $T$ by its homotopy class, which is characterised by two relatively prime integers $p, q$:
$[J] = p [\mu] + q [\lambda]$.
Let $r,s \in \Z$ be relatively prime integers such that
$\det \left ( \begin{pmatrix} p & q \\ r & s \end{pmatrix} \right ) = 1$
and let $J'$ be a curve in $T$ such that 
$[J'] = r [\mu] + s [\lambda].$
 We can assume that $J$ and $J'$ intersect on a single point $P$. 
 Let $N$ denote the manifold obtained by Dehn filling on $L_c$ with coefficient $p/q$, and $B$ the filling solid torus.
 Then $Q: \pi_1(M) \twoheadrightarrow \pi_1(N)$ is the quotient group homomorphism that adds the relation $[\mu]^p [\lambda]^q = 1$. We have trivialised the curve $[J]$.
  Proposition \ref{prop surgery} can thus be re-written as:
  \begin{proposition}[\cite{BAthesis}, Theorem 3.7] \label{prop surgery links}
Let $\phi: \pi_N \to \Z$ and $\gamma: \pi_N \to G$ be group homomorphisms such that $(\pi_N,\phi,\gamma)$ forms an admissible triple. For all $t>0$, if $(\gamma \circ Q)([\mu]^r [\lambda]^s)$ is of infinite order in $G$ and if
$C_*^{(2)}(M,\phi \circ Q,\gamma \circ Q)(t)$ is weakly acyclic and of determinant class, then 
$C_*^{(2)}(N,\phi,\gamma)(t)$ has the same properties and
$$
T^{(2)}(N,\phi,\gamma)(t) \ \dot{=} \  
\dfrac{T^{(2)}(M,\phi \circ Q,\gamma \circ Q)(t)}
{\max(1,t)^{| r (\phi\circ Q)([\mu]) + s (\phi\circ Q)([\lambda]) |}}.
$$
\end{proposition}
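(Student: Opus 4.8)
The plan is to deduce this statement directly from the general Dehn surgery formula of Proposition \ref{prop surgery}, which is already available in exactly the geometric situation set up above (a solid torus $B$ glued along the boundary component $T = \partial V(L_c)$ of the link exterior $M$). The analytic content---weak acyclicity, determinant class, and the multiplicative behaviour of the torsions---has all been absorbed into Proposition \ref{prop surgery}. The only work that remains is to translate its two ingredients, namely the infinite-order hypothesis on $\gamma(\iota(c))$ and the exponent $|\phi(\iota(c))|$, into the meridian-longitude language of the present statement.

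First I would pin down the homotopy class of the core $c$ of the filling solid torus $B$. By the construction of the CW-structure preceding Proposition \ref{prop surgery}, the curve $J$ is chosen to bound the meridian disk $D$ of $B$, so $J$ plays the role of a meridian of $B$ while the complementary curve $J'$ is identified with a longitude; in particular $c$ and $J'$ have the same homotopy class in $\pi_1(B)$. Since $J' \subset T = M \cap B$ with $[J'] = r[\mu] + s[\lambda]$ in $\pi_1(T)$, tracking this class through the commuting square of inclusion-induced maps (using $i = \iota \circ i_B = Q \circ i_M$, where $i_B([J']) = c$ and $i_B([J]) = 1$) yields the key identity
$$\iota(c) \ = \ Q\!\left([\mu]^r [\lambda]^s\right) \quad \text{in } \pi_N.$$

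With this identity in hand the two translations are immediate. The hypothesis of Proposition \ref{prop surgery} that ``$\gamma(\iota(c))$ has infinite order in $G$'' becomes precisely ``$(\gamma\circ Q)([\mu]^r[\lambda]^s)$ has infinite order in $G$'', which is the hypothesis stated here. For the exponent, since $\phi\circ Q\colon \pi_1(M) \to \Z$ is a group homomorphism it is additive on words, so
$$\phi(\iota(c)) \ = \ (\phi\circ Q)\!\left([\mu]^r[\lambda]^s\right) \ = \ r\,(\phi\circ Q)([\mu]) + s\,(\phi\circ Q)([\lambda]),$$
whence $|\phi(\iota(c))| = | r (\phi\circ Q)([\mu]) + s (\phi\circ Q)([\lambda]) |$. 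Substituting these into the conclusion of Proposition \ref{prop surgery} gives the claimed formula, together with the assertions on weak acyclicity and determinant class.

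I expect the only genuinely delicate point to be the bookkeeping behind the identity $\iota(c) = Q([\mu]^r[\lambda]^s)$: one must keep careful track of which inclusion-induced homomorphism carries the class $[J'] = r[\mu]+s[\lambda]$ from $\pi_1(T)$ into $\pi_N$, and verify that the coprimality condition $\det\begin{pmatrix} p & q \\ r & s \end{pmatrix} = 1$ is exactly what guarantees that $(J,J')$ generate $\pi_1(T) \cong \Z^2$, so that $J'$ legitimately serves as the longitude (hence as the core $c$) of the filling solid torus. Once this compatibility is secured, no analytic input beyond Proposition \ref{prop surgery} is needed---the argument is purely a change of coordinates on $\pi_1(T)$.
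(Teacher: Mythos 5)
Your proposal is correct and is exactly the route the paper takes: the statement is presented there as a direct rewriting of Proposition \ref{prop surgery}, using the identification of the core $c$ of the filling torus with $J'$, so that $\iota(c)=Q([\mu]^r[\lambda]^s)$ and $|\phi(\iota(c))|=|r(\phi\circ Q)([\mu])+s(\phi\circ Q)([\lambda])|$. Your extra care about the determinant condition guaranteeing that $(J,J')$ generate $\pi_1(T)$ is the same bookkeeping the paper relies on implicitly.
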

Let us now study some applications of this Dehn surgery formula.

\subsection{$\infty$-surgery: erasing one component of a link}

Let $L = L_1 \cup \ldots \cup L_{c-1} \cup L_c$ be a $c$-component link, and $L' = L_1 \cup \ldots \cup L_{c-1}$ be the link obtained by forgetting the last component, or alternatively by applying a trivial Dehn filling of the last component.
Then the natural injection $i : M_L \hookrightarrow M_{L'}$  passes to fundamental groups as an epimorphism
$Q = i_*: G_L \twoheadrightarrow G_{L'}$, which is the same as the quotient homomorphism by the normal subgroup generated by any meridian of $L_c$. Let $(\mu_c,\lambda_c)$ be a preferred meridian-longitude system of $L_c$.
The surgery coefficients are $(p,q,r,s)=(1,0,0,1)$.
We can now state the Main Theorem of this article:

\begin{theorem} \label{thm surgery forget}
Let $\phi: \pi_1(M_{L'}) \to \Z$ and $\gamma: \pi_1(M_{L'}) \to G$ be group homomorphisms such that $(\pi_1(M_{L'}),\phi,\gamma)$ forms an admissible triple.
We can write $\phi = (n_1, \ldots, n_{c-1}) \circ \alpha_{L'}$ and thus 
$\phi \circ Q = (n_1, \ldots, n_{c-1}, 0) \circ \alpha_L$ for some non zero vector $(n_1, \ldots, n_{c-1}) \in \Z^{c-1}$.
 For all $t>0$, if $(\gamma \circ Q)([\lambda])$ is of infinite order in $G$ and if
 $C_*^{(2)}(M_L,(n_1, \ldots, n_{c-1}, 0) \circ \alpha_L,\gamma \circ Q)(t)$ is weakly acyclic and of determinant class, then 
$C_*^{(2)}(M_{L'},(n_1, \ldots, n_{c-1}) \circ \alpha_{L'},\gamma)(t)$ has the same properties and
$$T^{(2)}_{L',(n_1, \ldots, n_{c-1})}(\gamma)(t)  \ \dot{=} \ 
\dfrac{T^{(2)}_{L,(n_1, \ldots, n_{c-1}, 0)}(\gamma \circ Q)(t)}
{\max(1,t)^{|\mathrm{lk}(L_1,L_c) n_1 + \ldots + \mathrm{lk}(L_{c-1},L_c) n_{c-1}|}}.$$
\end{theorem}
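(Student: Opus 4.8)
The plan is to apply Proposition \ref{prop surgery links} directly, exploiting the fact that forgetting the last component $L_c$ is precisely the trivial ($\infty$-)Dehn filling on $L_c$: one glues the solid torus $V(L_c)$ back onto the boundary torus $\partial V(L_c)$ so that the meridian $[\mu_c]$ bounds a disk. In the notation of that Proposition this is the case of surgery coefficients $(p,q,r,s)=(1,0,0,1)$, with $M = M_L$ the unfilled exterior and $N = M_{L'}$ the filled one, and with $Q\colon G_L \twoheadrightarrow G_{L'}$ the induced quotient map. First I would check that the hypotheses of the theorem are exactly the specializations of those of Proposition \ref{prop surgery links}. Since $r=0$ and $s=1$, the curve $[\mu]^r[\lambda]^s$ equals $[\lambda]$, so the infinite-order requirement on $(\gamma\circ Q)([\mu]^r[\lambda]^s)$ becomes the stated requirement that $(\gamma\circ Q)([\lambda])$ have infinite order in $G$; the determinant-class hypothesis on $C_*^{(2)}(M_L,\ldots)$ is identical, and the conclusion concerning weak acyclicity and determinant class of $C_*^{(2)}(M_{L'},\ldots)$ is inherited verbatim.

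Substituting these coefficients into the formula of Proposition \ref{prop surgery links} gives
$$T^{(2)}_{L',(n_1,\ldots,n_{c-1})}(\gamma)(t) \ \dot{=} \ \dfrac{T^{(2)}_{L,(n_1,\ldots,n_{c-1},0)}(\gamma\circ Q)(t)}{\max(1,t)^{|r(\phi\circ Q)([\mu])+s(\phi\circ Q)([\lambda])|}} = \dfrac{T^{(2)}_{L,(n_1,\ldots,n_{c-1},0)}(\gamma\circ Q)(t)}{\max(1,t)^{|(\phi\circ Q)([\lambda])|}},$$
so that everything reduces to evaluating the single integer $(\phi\circ Q)([\lambda])$, where $[\lambda]=[\lambda_c]$ is the preferred longitude of $L_c$.

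The one genuinely geometric ingredient, which I would isolate as the \emph{key computation}, is the classical homological identity
$$\alpha_L([\lambda_c]) = \sum_{i=1}^{c-1} \mathrm{lk}(L_i,L_c)\,[\mu_i] \in \Z^c,$$
expressing that in the link exterior the $0$-framed longitude of a component is homologous to the linking-number-weighted sum of the meridians of the other components, the coefficient of its own meridian $[\mu_c]$ vanishing by the choice of preferred framing. Since $\phi\circ Q = (n_1,\ldots,n_{c-1},0)\circ\alpha_L$ by hypothesis, I would then pair this class against the vector $(n_1,\ldots,n_{c-1},0)$ to obtain
$$(\phi\circ Q)([\lambda_c]) = \sum_{i=1}^{c-1} \mathrm{lk}(L_i,L_c)\,n_i = \mathrm{lk}(L_1,L_c)n_1+\ldots+\mathrm{lk}(L_{c-1},L_c)n_{c-1},$$
and plugging this exponent into the displayed formula reproduces exactly the claimed denominator.

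In short, there is no serious analytic difficulty: the weak-acyclicity and determinant-class bookkeeping is carried over wholesale from Proposition \ref{prop surgery links}, and the only real work is the standard computation of the longitude's class in $H_1(M_L)$. The only point requiring care is the orientation and sign conventions entering the linking numbers, but since the exponent appears solely through its absolute value $|\cdot|$, any such sign ambiguity is harmless to the final statement.
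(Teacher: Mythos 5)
Your proposal is correct and follows exactly the paper's own argument: specializing Proposition \ref{prop surgery links} to the coefficients $(p,q,r,s)=(1,0,0,1)$ and then computing $(\phi\circ Q)([\lambda_c])=\mathrm{lk}(L_1,L_c)n_1+\ldots+\mathrm{lk}(L_{c-1},L_c)n_{c-1}$ via the standard homological expression of the preferred longitude in terms of the meridians. No gaps; the added remark on sign conventions being absorbed by the absolute value is a sensible precaution.
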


This theorem generalises the well-known property of the Alexander polynomial for links proved by Torres in \cite{Torres}.

\begin{proof}
We apply Proposition \ref{prop surgery links} and we use the fact that here
\begin{align*}
 r (\phi\circ Q)([\mu]) + s (\phi\circ Q)([\lambda]) &= (\phi\circ Q)([\lambda_c]) \\ &= \left ((n_1, \ldots, n_{c-1}, 0) \circ \alpha_L\right ) ([\lambda_c]) \\
 &= \mathrm{lk}(L_1,L_c) n_1 + \ldots + \mathrm{lk}(L_{c-1},L_c) n_{c-1}.
\end{align*}
\end{proof}

\subsection{$1/n$-surgery: Twist knots and the Whitehead link}

Let $L$ be the Whitehead link in $S^3$, and $M_L$ its exterior. We draw it as in Figure \ref{fig whitehead link} with components $L_1$ and $L_2$. Note that $L$ is actually ambient isotopic to the link obtained by reordering the components, therefore doing a given surgery on $L_1$ or $L_2$ yields the same manifold up to homeomorphism. We will do a $1/n$-surgery on the component $L_2$.

\begin{figure}[h]
\centering
\begin{tikzpicture}[every path/.style={string ,black} , every node/.style={transform shape , knot crossing , inner sep=1.5 pt } ]
\begin{scope}[scale=0.8]
\begin{scope}[xshift=0cm,rotate=0,scale=1]
%L1
\draw  (4,2) node {$L_1$} ;
\coordinate (a) at (-0.37,1) ;
\coordinate (b) at (0.37,1) ;
\coordinate (c) at (-0.37,2) ;
\coordinate (d) at (0.37,2) ;
\coordinate (t1) at (-3,2) ;
\coordinate (t2) at (3,2) ;
\coordinate (t3) at (-3,1) ;
\coordinate (t4) at (3,1) ;
\coordinate (t5) at (-3,-1) ;
\coordinate (t6) at (3,-1) ;
\coordinate (t7) at (-3,-2) ;
\coordinate (t8) at (3,-2) ;
\draw (t1) -- (c);
\draw [->] (d) -- (t2) ..controls +(2.5,0) and +(2.5,0).. (t8) -- (2,-2);
\draw (2,-2) -- (-0.2,-2);
\draw  (-0.4,-2) -- (-2.2,-2); 
\draw (-2.4,-2) -- (t7) ..controls +(-2.5,0) and +(-2.5,0).. (t1) ;
\draw (t3) -- (a);
\draw (b) -- (t4) ..controls +(1,0) and +(1,0).. (t6) -- (2,-1);
\draw [<-] (2,-1) -- (-0.2,-1);
\draw  (-0.4,-1) -- (-2.2,-1); 
\draw (-2.4,-1) -- (t5) ..controls +(-1,0) and +(-1,0).. (t3) ;
%a1 a2
\draw [very thick, color=red] (2.5,-1.3) -- (2.5,-1.1);
\draw[->] [very thick, color=red](2.5,-0.9) -- (2.5,-0.5);
\draw [very thick, color=red] (2.9,-0.6) node {$a_1$} ;
\draw [very thick, color=red](2.5,-1.7) -- (2.5,-1.9);
\draw[->] [very thick, color=red](2.5,-2.1) -- (2.5,-2.5);
\draw [very thick, color=red] (2.9,-2.4) node {$a_2$} ;
%beta
\begin{scope}[xshift=-2cm,rotate=0,scale=1]
\coordinate (yh) at (0,-0.7) ;
\coordinate (yb) at (0,-2.3) ;
\draw [very thick, color=blue] (0.3,-0.9) -- (0.3,-0.7) -- (0,-0.7);
\draw [very thick, color=blue] (0,-0.7) -- (-0.3,-0.7) -- (-0.3,-1.5);
\draw [very thick, color=blue][<-] (-0.3,-1.5) -- (-0.3,-2.3) -- (0.3,-2.3) -- (0.3,-2.1) ;
\draw [very thick, color=blue] (0.3,-1.1) -- (0.3,-1.9) ;
\draw [very thick, color=blue] (-0.5,-1.5) node {$\beta$} ;
\end{scope}
%L2
\begin{scope}[xshift=0cm,rotate=0,scale=1]
\coordinate (yh) at (0,-0.7) ;
\coordinate (yb) at (0,-2.3) ;
\draw (0.3,-0.9) -- (0.3,-0.5);
\draw (0.3,-0.3) -- (0.3,0) -- (-0.3,0) -- (-0.3,-1.5);
\draw [<-] (-0.3,-1.5) -- (-0.3,-3) -- (0.3,-3) -- (0.3,-2.1) ;
\draw  (0.3,-1.1) -- (0.3,-1.9) ;
\draw  (-0.65,-1.5) node {$L_2$} ;
\end{scope}
%alpha
\coordinate (th) at (0,3) ;
\coordinate (td) at (5.5,0) ;
\coordinate (tb) at (0,-3) ;
\coordinate (tg) at (-5.5,0) ;
\draw [color=magenta, very thick] (2,0.4) -- (2,-0.4) -- (-0.2,-0.4);
\draw [->] [color=magenta, very thick] (-0.4,-0.4) -- (-1.5,-0.4) ;
\draw [color=magenta, very thick] (-1.5,-0.4) -- (-2,-0.4) -- (-2,0.4) -- (2,0.4) ;
\draw [color=magenta](-1.5,-0.15) node {$\alpha$} ;
\end{scope}
%crossings of L1
\begin{scope}[xshift=0cm,yshift=1.5cm,scale=0.63,color=black]
	\node (o) at (0,0) {};
	\node (h) at (90:0.4) {};	
	\node (hd) at (55:1) {};	
	\node (hdm) at (70:0.6) {};	
	\node (hg) at (180-55:1) {};
	\node (hgm) at (180-70:0.6) {};
	\node (b) at (-90:0.4) {};
	\node (bd) at (-55:1) {};	
	\node (bdm) at (-70:0.6) {};	
	\node (bg) at (-180+55:1) {};
	\node (bgm) at (-180+70:0.6) {};
	\draw (hd.center) .. controls (hd.2 south west) and (hdm.2 north east) .. (hdm.center) ;	
	\draw[<-] (hdm.center) .. controls (hdm.2 south west) and (h.2 north east) .. (h) ;	
	\draw (b.center) .. controls (b.4 north west) and (h.4 south west) .. (h) ;	
	\draw (b.center) .. controls (b.2 south east) and (bdm.2 north west) .. (bdm.center) ;	
	\draw[<-] (bdm.center) .. controls (bdm.2 south east) and (bd.2 north west) .. (bd.center) ;	
	\draw[->] (hg.center) .. controls (hg.2 south east) and (hgm.2 north west) .. (hgm.center) ;	
	\draw (hgm.center) .. controls (hgm.2 south east) and (h.2 north west) .. (h.center) ;	
	\draw (b) .. controls (b.4 north east) and (h.4 south east) .. (h.center) ;	
	\draw[->] (b) .. controls (b.2 south west) and (bgm.2 north east) .. (bgm.center) ;	
	\draw (bgm.center) .. controls (bgm.2 south west) and (bg.2 north east) .. (bg.center) ;	
\end{scope}
\end{scope}
\end{tikzpicture}
\caption{The Whitehead link} \label{fig whitehead link}
\end{figure}
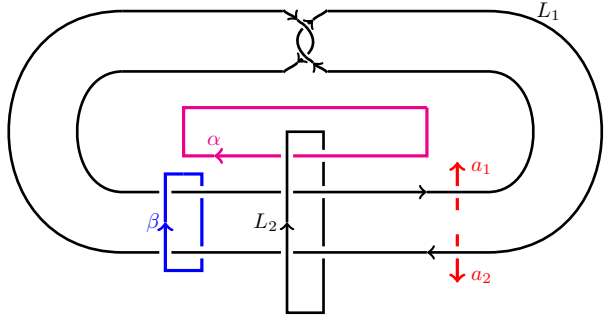

The following theorem relates a particular $L^2$-Alexander torsion of the Whitehead link, where $\phi$ sends the second component to zero and $\gamma$ is an epimorphism to a knot group, to the $L^2$-Alexander torsion of this knot group. The possible knots in question are the \textit{twist knots} $K_n$, described by the diagram of Figure \ref{fig twist knot}. 

\begin{figure}[h]
\centering
\begin{tikzpicture}[every path/.style={string ,black} , every node/.style={transform shape , knot crossing , inner sep=1.5 pt } ]

\begin{scope}[scale=0.8]
\begin{scope}[xshift=0cm,rotate=0,scale=1]
% sides of Kn without crossings
\draw  (4,2) node {$K_n$} ;
\coordinate (a) at (-0.37,1) ;
\coordinate (b) at (0.37,1) ;
\coordinate (c) at (-0.37,2) ;
\coordinate (d) at (0.37,2) ;
\coordinate (t1) at (-3,2) ;
\coordinate (t2) at (3,2) ;
\coordinate (t3) at (-3,1) ;
\coordinate (t4) at (3,1) ;
\coordinate (t5) at (-3,-1) ;
\coordinate (t6) at (3,-1) ;
\coordinate (t7) at (-3,-2) ;
\coordinate (t8) at (3,-2) ;
\draw (t1) -- (c);
\draw [->] (d) -- (t2) ..controls +(2.5,0) and +(2.5,0).. (t8) -- (2,-2);
\draw (2,-2) -- (1.8,-2);
\draw  (-1.4,-2) -- (-2.2,-2); 
\draw (-2.4,-2) -- (t7) ..controls +(-2.5,0) and +(-2.5,0).. (t1) ;
\draw (t3) -- (a);
\draw (b) -- (t4) ..controls +(1,0) and +(1,0).. (t6) -- (2,-1);
\draw [<-] (2,-1) -- (1.8,-1);
\draw  (-1.4,-1) -- (-2.2,-1); 
\draw (-2.4,-1) -- (t5) ..controls +(-1,0) and +(-1,0).. (t3) ;
% a1 a2
\draw [very thick, color=red] (2.5,-1.3) -- (2.5,-1.1);
\draw[->] [very thick, color=red](2.5,-0.9) -- (2.5,-0.5);
\draw [very thick, color=red] (2.9,-0.6) node {$a_1$} ;
\draw [very thick, color=red](2.5,-1.7) -- (2.5,-1.9);
\draw[->] [very thick, color=red](2.5,-2.1) -- (2.5,-2.5);
\draw [very thick, color=red] (2.9,-2.4) node {$a_2$} ;
% m
\begin{scope}[xshift=-2cm,rotate=0,scale=1]
\coordinate (yh) at (0,-0.7) ;
\coordinate (yb) at (0,-2.3) ;
\draw [very thick, color=blue] (0.3,-0.9) -- (0.3,-0.7) -- (0,-0.7);
\draw [very thick, color=blue] (0,-0.7) -- (-0.3,-0.7) -- (-0.3,-1.5);
\draw [very thick, color=blue][<-] (-0.3,-1.5) -- (-0.3,-2.3) -- (0.3,-2.3) -- (0.3,-2.1) ;
\draw [very thick, color=blue] (0.3,-1.1) -- (0.3,-1.9) ;
\draw [very thick, color=blue] (-0.5,-1.5) node {$m$} ;
\end{scope}
%bottom crossings
\coordinate (th) at (0,3) ;
\coordinate (td) at (5.5,0) ;
\coordinate (tb) at (0,-3) ;
\coordinate (tg) at (-5.5,0) ;
\draw  (0.2,-1.5) node {$\ldots$} ;
\draw  (0.2,-2.4) node {$\underbrace{\hspace{3cm}}$} ;
\draw  (0.2,-2.8) node {$2n$ crossings} ;
	\node (a1) at (-1.4,-1) {};
	\node (a2) at (-0.8,-1) {};
	\node (a3) at (-0.2,-1) {};
	\node (a4) at (0.6,-1) {};
	\node (a5) at (1.2,-1) {};
	\node (a6) at (1.8,-1) {};
	\node (b1) at (-1.1,-1.5) {};
	\node (b2) at (-0.5,-1.5) {};
	\node (b4) at (0.9,-1.5) {};
	\node (b5) at (1.5,-1.5) {};
	\node (c1) at (-1.4,-2) {};
	\node (c2) at (-0.8,-2) {};
	\node (c3) at (-0.2,-2) {};
	\node (c4) at (0.6,-2) {};
	\node (c5) at (1.2,-2) {};
	\node (c6) at (1.8,-2) {};
	\draw (c1.center) .. controls (c1.2 north east) and (b1.2 south west) .. (b1.center) ;	
	\draw (b1.center) .. controls (b1.2 north east) and (a2.2 south west) .. (a2.center) ;	
	\draw (c2.center) .. controls (c2.2 north east) and (b2.2 south west) .. (b2.center) ;	
	\draw (b2.center) .. controls (b2.2 north east) and (a3.2 south west) .. (a3.center) ;	
	\draw (c4.center) .. controls (c4.2 north east) and (b4.2 south west) .. (b4.center) ;	
	\draw (b4.center) .. controls (b4.2 north east) and (a5.2 south west) .. (a5.center) ;	
	\draw (c5.center) .. controls (c5.2 north east) and (b5.2 south west) .. (b5.center) ;	
	\draw (b5.center) .. controls (b5.2 north east) and (a6.2 south west) .. (a6.center) ;	
	\draw (a1.center) .. controls (a1.2 south east) and (b1.2 north west) .. (b1) ;	
	\draw (b1) .. controls (b1.2 south east) and (c2.2 north west) .. (c2.center) ;
	\draw (a2.center) .. controls (a2.2 south east) and (b2.2 north west) .. (b2) ;	
	\draw (b2) .. controls (b2.2 south east) and (c3.2 north west) .. (c3.center) ;
	\draw (a4.center) .. controls (a4.2 south east) and (b4.2 north west) .. (b4) ;	
	\draw (b4) .. controls (b4.2 south east) and (c5.2 north west) .. (c5.center) ;
	\draw (a5.center) .. controls (a5.2 south east) and (b5.2 north west) .. (b5) ;	
	\draw (b5) .. controls (b5.2 south east) and (c6.2 north west) .. (c6.center) ;
\end{scope}
%upper crossings
\begin{scope}[xshift=0cm,yshift=1.5cm,scale=0.63,color=black]
	\node (o) at (0,0) {};
	\node (h) at (90:0.4) {};	
	\node (hd) at (55:1) {};	
	\node (hdm) at (70:0.6) {};	
	\node (hg) at (180-55:1) {};
	\node (hgm) at (180-70:0.6) {};
	\node (b) at (-90:0.4) {};
	\node (bd) at (-55:1) {};	
	\node (bdm) at (-70:0.6) {};	
	\node (bg) at (-180+55:1) {};
	\node (bgm) at (-180+70:0.6) {};
	\draw (hd.center) .. controls (hd.2 south west) and (hdm.2 north east) .. (hdm.center) ;	
	\draw[<-] (hdm.center) .. controls (hdm.2 south west) and (h.2 north east) .. (h) ;	
	\draw (b.center) .. controls (b.4 north west) and (h.4 south west) .. (h) ;	
	\draw (b.center) .. controls (b.2 south east) and (bdm.2 north west) .. (bdm.center) ;	
	\draw[<-] (bdm.center) .. controls (bdm.2 south east) and (bd.2 north west) .. (bd.center) ;	
	\draw[->] (hg.center) .. controls (hg.2 south east) and (hgm.2 north west) .. (hgm.center) ;	
	\draw (hgm.center) .. controls (hgm.2 south east) and (h.2 north west) .. (h.center) ;	
	\draw (b) .. controls (b.4 north east) and (h.4 south east) .. (h.center) ;	
	\draw[->] (b) .. controls (b.2 south west) and (bgm.2 north east) .. (bgm.center) ;	
	\draw (bgm.center) .. controls (bgm.2 south west) and (bg.2 north east) .. (bg.center) ;	
\end{scope}
\end{scope}
\end{tikzpicture}
\caption{The twist knot $K_n$} \label{fig twist knot}
\end{figure}
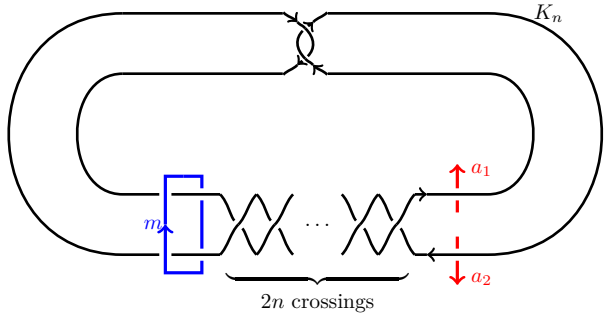

Note that $n\in \Z$ can be positive or negative, that $K_0 = O$ is the unknot, 
$K_1 = 3_1$ is the trefoil knot,
$K_{-1} = 4_1$ is the figure-eight knot,
$K_2 = 5_2$, $K_{-2} = 6_1$, etc.

Let $(\alpha, \beta)$ be a preferred meridian-longitude system for $L_2$ as in Figure \ref{fig whitehead link}. 
Here we do $1/n$-surgery on $L_2$, which means that $(p,q)=(1,n)$, and thus $(r,s) = (0,1)$ is a possible choice of coefficients for the curve $J'$, which means we can assume $J'= \beta$. 
Let $N$ be the manifold obtained by this surgery on $M_L$. Let $J_M: M_L \hookrightarrow N$ be the associated natural inclusion, which extends to an inclusion $S^3 \setminus V(L_2) \hookrightarrow S^3$ since $1/n$-surgery on the trivial knot in $S^3$ yields $S^3$. The image of $L_1$ by this inclusion is $K_n$, as Figures \ref{fig whitehead link} and \ref{fig twist knot} illustrate. Thus $N = M_{K_n} = S^3 \setminus V(K_n)$. 
The inclusion $J_M$ induces an epimorphism $Q_n: \pi_1(M_L) \twoheadrightarrow \pi_1(M_{K_n})$ whose kernel is the normal subgroup generated by $[J]= [\alpha] [\beta]^n$.
Thus the following diagram is commutative, and we obtain the following theorem:

\begin{center}
\begin{tikzpicture}
\begin{scope}[xshift=0cm,rotate=0,scale=1]
[description/.style={fill=white,inner sep=2pt}] 
\matrix(a)[matrix of math nodes, row sep=2em, column sep=2.5em, text height=1.5ex, text depth=0.25ex] 
{ \pi_1(M_L) &  \pi_1(M_{K_n}) & G \\
\Z^2 & \Z & \\}; 
\path[->](a-1-1) edge node[above]{$Q_n$} (a-1-2); 
\path[->](a-1-2) edge node[above]{$\gamma$} (a-1-3); 
\path[->](a-1-1) edge node[right]{$\alpha_L$} (a-2-1); 
\path[->](a-1-2) edge node[right]{$\alpha_{K_n}$} (a-2-2); 
\path[->](a-2-1) edge node[below]{$(1,0)$} (a-2-2); 
\path[->, dotted](a-1-3) edge  (a-2-2);
\end{scope}
\end{tikzpicture}
\end{center}

\begin{theorem} \label{thm surgery twist}
Let $\gamma: \pi_1(M_{K_n}) \to G$ be a group homomorphism such that \\
$(\pi_1(M_{K_n}),\alpha_{K_n},\gamma)$ forms an admissible triple.
 For all $t>0$, if $\gamma (m)$ is of infinite order in $G$ and if
$C_*^{(2)}(M_L,(1,0) \circ \alpha_L,\gamma \circ Q_n)(t)$ is weakly acyclic and of determinant class, then
$C_*^{(2)}(M_{K_n},\alpha_{K_n},\gamma)(t)$ has the same properties, and
$$
T^{(2)}_{K_n,1}(\gamma)(t) \ \dot{=} \ 
T^{(2)}_{L,(1,0)}(\gamma \circ Q_n)(t).
$$
In particular, for $\gamma=id$,
$$
\Delta^{(2)}_{K_n}(t) \ \dot{=} \ 
T^{(2)}_{L,(1,0)}(Q_n)(t) \cdot \max(1,t).
$$
\end{theorem}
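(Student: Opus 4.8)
The plan is to apply the Dehn surgery formula of Proposition \ref{prop surgery links} directly, with the parameters dictated by the geometric setup already fixed above. First I would record the data: the surgered manifold is $N = M_{K_n}$, the filled component is the last component $L_2$ of the Whitehead link with preferred meridian-longitude pair $(\mu,\lambda) = (\alpha,\beta)$, the surgery curve has coefficients $(p,q) = (1,n)$, and the complementary curve is taken to be $J' = \beta$, i.e. $(r,s) = (0,1)$. The cohomology class is $\phi = \alpha_{K_n}$, so that by the commutative diagram preceding the statement $\phi \circ Q_n = (1,0)\circ \alpha_L$, in accordance with the normalization $T^{(2)}_{K_n,1}(\gamma)(t) = T^{(2)}(M_{K_n},\alpha_{K_n},\gamma)(t)$.

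Next I would verify that the hypotheses of Proposition \ref{prop surgery links} hold. The infinite-order condition there requires $(\gamma \circ Q_n)([\mu]^r[\lambda]^s) = (\gamma \circ Q_n)([\beta]) = \gamma(m)$ to have infinite order in $G$, where $m = Q_n(\beta)$ is the curve drawn in Figure \ref{fig twist knot}; this is exactly the assumption $\gamma(m)$ of infinite order. The admissibility of $(\pi_1(M_{K_n}),\alpha_{K_n},\gamma)$ is assumed, and the weak acyclicity and determinant class of $C_*^{(2)}(M_L,(1,0)\circ \alpha_L,\gamma \circ Q_n)(t)$ is assumed outright; Proposition \ref{prop surgery links} then transfers both properties to $C_*^{(2)}(M_{K_n},\alpha_{K_n},\gamma)(t)$ and produces the quotient formula.

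The only genuine computation is the denominator exponent of Proposition \ref{prop surgery links}. With $(r,s) = (0,1)$ it equals $|(\phi\circ Q_n)([\beta])|$, and since $\phi\circ Q_n = (1,0)\circ \alpha_L$ while the longitude $\beta = \lambda_2$ is homologous in $M_L$ to $\mathrm{lk}(L_1,L_2)[\mu_1]$, I would invoke that the Whitehead link has linking number zero to get $\alpha_L([\beta]) = 0$, whence the exponent vanishes. The denominator collapses to $\max(1,t)^0 = 1$, giving the Main formula $T^{(2)}_{K_n,1}(\gamma)(t) \ \dot{=}\ T^{(2)}_{L,(1,0)}(\gamma\circ Q_n)(t)$.

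Finally, for the specialization $\gamma = id$ I would combine this with the knot identity recalled above, $T^{(2)}_{K,m}(t) \ \dot{=}\ \Delta^{(2)}_K(t^m)/\max(1,t)^{|m|}$, evaluated at $m=1$ for the knot $K_n$: this yields $T^{(2)}_{K_n,1}(t) \ \dot{=}\ \Delta^{(2)}_{K_n}(t)/\max(1,t)$. Since $T^{(2)}_{L,(1,0)}(\gamma\circ Q_n)$ becomes $T^{(2)}_{L,(1,0)}(Q_n)$ when $\gamma = id$, rearranging gives $\Delta^{(2)}_{K_n}(t) \ \dot{=}\ T^{(2)}_{L,(1,0)}(Q_n)(t)\cdot \max(1,t)$. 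The genuinely geometric step, the identification $N = M_{K_n}$ through the Rolfsen twist, has already been carried out in the text preceding the statement, so the main (and modest) obstacle is simply to track the admissibility, infinite-order, and determinant-class hypotheses correctly through Proposition \ref{prop surgery links} and to confirm that the linking number zero of the Whitehead link kills the denominator exponent.
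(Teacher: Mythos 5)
Your proposal is correct and follows essentially the same route as the paper: both apply Proposition \ref{prop surgery links} with $(r,s)=(0,1)$, identify $(\gamma\circ Q_n)([\beta])$ with $\gamma(m)$, and kill the denominator exponent by observing that $\alpha_L([\beta])=0$ because the preferred longitude of $L_2$ is homologous to $\mathrm{lk}(L_1,L_2)[\mu_1]$ and the Whitehead link has linking number zero. Your explicit derivation of the $\gamma=id$ specialization via $T^{(2)}_{K_n,1}(t)\ \dot{=}\ \Delta^{(2)}_{K_n}(t)/\max(1,t)$ is exactly the step the paper leaves implicit in ``The theorem follows.''
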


\begin{proof}
We apply Proposition \ref{prop surgery links}. 
Here $(\gamma \circ Q_n)([\alpha]^r [\beta]^s) = (\gamma \circ Q_n)([\beta]) = \gamma(m)$.
Since
$(\alpha_{K_n} \circ Q_n)([\beta]) = (1,0) \cdot \alpha_L([\beta]) = (1,0) \cdot \begin{pmatrix}0 \\ 0\end{pmatrix} = 0$, we conclude that
 the denominator part is equal to $\max(1,t)^{(\alpha_{K_n} \circ Q_n)([\beta])} = 1$. The theorem follows. 
\end{proof}

\begin{remark}
Results of Thurston and Jorgensen (see \cite{thurston_notes}) demonstrate that if one does $p/q$-Dehn filling on a hyperbolic link complement, with $p^2 + q^2$ large enough the resulting manifold will also be hyperbolic with volume approaching the volume of the original link complement by smaller values as $p^2+q^2 \to \infty$.
In particular, as $n \to \infty$, by Theorem \ref{thm surgery twist},
\begin{align*}
 T^{(2)}_{L,(1,0)}(Q_n)(1) = 
\Delta^{(2)}_{K_n}(1) 
&= \exp\left (\dfrac{vol(K_n)}{6 \pi}\right ) 
&\underset{n \to \infty}{\longrightarrow}
\exp\left (\dfrac{vol(L)}{6 \pi}\right )  = T^{(2)}_{L,(1,0)}(1).
\end{align*}
\end{remark}
It is now natural to wonder if there exists a similar convergence of the $L^2$-Alexander torsions for $t \neq 1$.

\begin{question}
For every $t>0$, do we have
\begin{align*}
 T^{(2)}_{L,(1,0)}(Q_n)(t) 
\underset{n \to \infty}{\longrightarrow}
T^{(2)}_{L,(1,0)}(t) \ ?
\end{align*}
\end{question}

\section{Seifert-fibered link exteriors}
\label{sec:splicing}

It follows from \cite[Theorem 8.5]{DFL} and \cite{herrmannarxiv}
that for a Seifert $3$-manifold $M$, the $L^2$-Alexander torsions $T^{(2)}(M,\phi,\gamma)$ are equal to
$(t \mapsto \max(1,t))$ to the power the Thurston norm $x_M(\phi)$ of $\phi$.
Computing Thurston norms is a difficult problem in general, but
 we will compute in this section the exact values of the $L^2$-Alexander torsions for all  Seifert-fibered link exteriors, and thus the values of their associated Thurston norms. 
We hope that these various formulas will provide help to the community to compute particular Thurston norms for link exteriors.

Along the way, these various computations allow us to determine the $L^2$-Alexander torsions of a connected sum of links and of a general multi-component cabling of a link by a torus link (see Theorems \ref{thm_torsion_sum} and \ref{thm_torsion_cabling}).
These results generalise the ones for knots of \cite[Theorem 3.2]{BA13} and \cite[Theorem 4.3]{BA13}.

\subsection{Links with Seifert-fibered exterior} \label{Links with Seifert-fibered exterior}

Let us consider $S^3$ both as the unit sphere of $\C^2$ and as the one-point compactification of $\R^3$ by the point $\infty$. We define
\begin{itemize}
\item $T(m,n) = \{(z_1,z_2) \in S^3 \subset \C^2 | z_1^m = z_2^n\}$ the torus link of type $(m,n)$ with
 $e = \textrm{gcd}(m,n)$ components (which can be drawn on a torus as $m$ strands twisted $n$ times by an angle $2\pi / m$),
\item $H_v =  \{(z_1,0) \in S^3 \}$ the trivial knot drawn as the vertical line passing through $\infty$ in $\R^3$,
\item $H_h =  \{(0,z_2) \in S^3 \}$ the trivial knot drawn in $\R^3$ as the unit circle of an horizontal plane (normal to $H_v$ in its origin).
\end{itemize}
This allows us to describe the links $L$ in $S^3$ whose exterior is a Seifert manifold \cite[Proposition 3.3]{Bud}:

\begin{proposition} \label{prop link seifert}
Let $L$ be a non-split link in $S^3$. Its exterior $M_L$ is Seifert-fibered if and only if $L$ is one of the following links:
\begin{itemize}
\item a torus link $T(m,n) = T(ep,eq)$ with $p,q$ relatively prime (and both nonzero if $e \geqslant 2$),
\item a link $T(ep,eq) \cup H_v$ with $p,q$ relatively prime and $p \neq 0$,
\item a link $T(ep,eq) \cup H_v \cup H_h$ with $p,q$ relatively prime.
\end{itemize}
\end{proposition}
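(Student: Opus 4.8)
The plan is to prove both implications by reducing everything to the classification of Seifert fibrations of $S^3$ itself. For the easy ($\Leftarrow$) direction, I would exhibit an explicit Seifert fibration of $S^3$ compatible with each family. Viewing $S^3 = \{(z_1,z_2) \in \C^2 : |z_1|^2 + |z_2|^2 = 1\}$, the circle action $e^{i\theta}\cdot(z_1,z_2) = (e^{ip\theta}z_1, e^{iq\theta}z_2)$ defines a Seifert fibration over the orbifold $S^2(|p|,|q|)$ whose two exceptional fibers are exactly $H_v = \{z_2=0\}$ and $H_h=\{z_1=0\}$, and whose regular fiber through a point with $z_1 z_2 \ne 0$ is a $(p,q)$-torus knot; the union of $e$ distinct regular fibers is precisely $T(ep,eq)$. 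Since removing an open saturated tubular neighbourhood of any collection of (regular or exceptional) fibers from a Seifert-fibered manifold yields a Seifert-fibered manifold with toroidal boundary, the exteriors of $T(ep,eq)$, of $T(ep,eq)\cup H_v$, and of $T(ep,eq)\cup H_v \cup H_h$ are all Seifert-fibered. The coprimality and nonvanishing conditions on $p,q$ are exactly what is needed for these fibers to be distinct and nondegenerate and for the link to be non-split.

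For the hard ($\Rightarrow$) direction, assume $M_L$ carries a Seifert fibration. Since $L$ is non-split, $M_L$ is irreducible (an embedded sphere in $M_L$ bounds two balls in $S^3$, and non-splitness forces one of them to miss $L$), so away from a few small exceptional pieces the Seifert structure is essentially unique and its regular fiber determines a well-defined isotopy class $h_i$ on each boundary torus $\partial M_{L_i} = \partial V(L_i)$. I would refill each $V(L_i)$, recovering $S^3$, and analyze whether the fibration extends across the filling solid tori. If for every $i$ the fiber slope $h_i$ differs from the meridian $\mu_i$, the fibration extends to a genuine Seifert fibration of all of $S^3$, inside which $L$ is precisely a union of fibers. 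One then invokes the classification of Seifert fibrations of $S^3$: up to isomorphism each is one of the fibrations constructed above, with base $S^2$ and at most two exceptional fibers realized by $H_v$ and $H_h$. A union of fibers of such a fibration is therefore a sublink of $T(ep,eq)\cup H_v \cup H_h$, which yields exactly the three listed families.

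The main obstacle is the remaining case, where the regular fiber coincides with a meridian $\mu_i$ on some boundary component, together with the well-known non-uniqueness of Seifert fibrations on the small pieces (the solid torus $D^2\times S^1$, the product $T^2\times I$, and the twisted $I$-bundle over the Klein bottle). When $h_i=\mu_i$, refilling $V(L_i)$ caps a regular fiber by a meridian disk, and a careful local analysis shows that the filled manifold either acquires an essential sphere or becomes homeomorphic to $S^1\times S^2$ or a connected sum, each incompatible with the irreducibility of $S^3$, unless the base orbifold is a disk or an annulus, in which case $M_L$ is one of the explicitly enumerable low-complexity Seifert-fibered spaces and can be matched directly against the three families. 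Dispatching these small cases by hand, and confirming that no Seifert fibration of $S^3$ exists beyond the $(p,q)$-family above, is where the real work lies; this is precisely the content carried out in \cite[Proposition 3.3]{Bud}, on which I would rely.
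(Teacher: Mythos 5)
The paper offers no proof of this proposition: it is stated as a known classification and attributed directly to \cite[Proposition 3.3]{Bud}. Your sketch --- explicit Seifert fibrations of $S^3$ for the easy direction, extension of the fibration across the filling solid tori plus the classification of Seifert fibrations of $S^3$ for the converse, with the delicate fiber-equals-meridian and non-unique-fibration cases deferred to Budney --- is a faithful outline of how that cited result is established, so you end up resting on exactly the same reference as the author.
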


We exclude the torus links of the form $T(m,0)$ with $|m| \geqslant 2$ since they are split.
We want to compute the $L^2$-Alexander torsions of all links listed in Proposition \ref{prop link seifert}. We will need various tools for this: gluing formulas, explicit homeomorphisms between link exteriors, $\infty$-surgery, etc. For the reader's convenience we outline the several steps of our strategy:
\begin{enumerate}
\item We compute the torsions for the keychain links $T(e,0) \cup H_v$ with Proposition \ref{prop WS1} (Section \ref{sec:key}).
\item We deduce the torsions for a connected sum of links thanks to the gluing formula of Proposition \ref{prop mayer vietoris} (Section \ref{Connected sum for links}).
\item We compute the torsions for the links $T(e,ek) \cup H_v$ by identifying their exterior with the exterior of the keychain link $T(e,0) \cup H_v$ (Section \ref{e,ek inside torus}).
\item We compute the torsions for the links $T(p,q) \cup H_v \cup H_h$ with the gluing formula (Section \ref{sec:tpqhh}).
\item We deduce the torsions for the links $T(ep,eq) \cup H_v \cup H_h$ thanks to the gluing formula (Section \ref{sec:tepeqhh}).
\item We apply two successive $\infty$-surgeries with Theorem \ref{thm surgery forget} and deduce the torsions for the links 
$T(ep,eq) \cup H_v$ and $T(ep,eq)$ (Sections \ref{sec:tepeqh} and \ref{sec:tepeq}).
\item We deduce general cabling formulas for links, thanks to the gluing formula (Section \ref{sec:cabling}).
\end{enumerate}

\begin{remark}
Most of the following results can also be proven with Fox calculus on particular presentations of the link groups. Details can be found in \cite{BAthesis}.
\end{remark}

\subsection{Keychain links}
\label{sec:key}

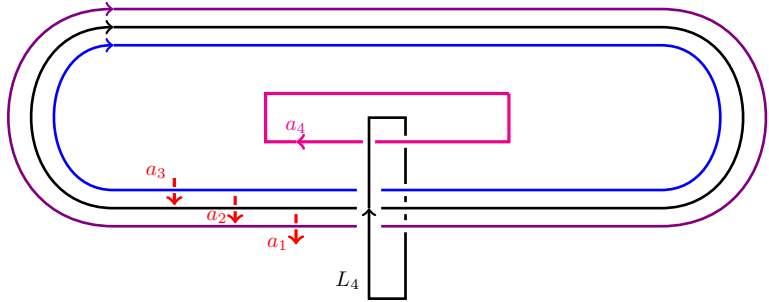
\begin{figure}[!h]
\centering
\begin{tikzpicture}[every path/.style={string ,black} , every node/.style={transform shape , knot crossing , inner sep=1.5 pt } ]
\begin{scope}[scale=0.8]

\begin{scope}[xshift=0cm,rotate=0,scale=1]

\coordinate (ch) at (-2,-1) ;
\coordinate (cb) at (-2,-2) ;

\coordinate (t1) at (-3,2) ;
\coordinate (t2) at (3,2) ;
\coordinate (t3) at (-3,1) ;
\coordinate (t4) at (3,1) ;
\coordinate (t5) at (-3,-1) ;
\coordinate (t6) at (3,-1) ;
\coordinate (t7) at (-3,-2) ;
\coordinate (t8) at (3,-2) ;

\coordinate (k1) at (-4.5,1.8) ;
\coordinate (k2) at (4.5,1.8) ;
\coordinate (k3) at (-4.5,1.2) ;
\coordinate (k4) at (4.5,1.2) ;
\coordinate (k5) at (-4.5,-1.2) ;
\coordinate (k6) at (4.5,-1.2) ;
\coordinate (k7) at (-4.5,-1.8) ;
\coordinate (k8) at (4.5,-1.8) ;
\coordinate (k9) at (-4.5,1.5) ;
\coordinate (k10) at (4.5,1.5) ;
\coordinate (k11) at (4.5,-1.5) ;
\coordinate (k12) at (-4.5,-1.5) ;
\draw[color=violet] (k1) -- (k2) ..controls +(2.3,0) and +(2.3,0).. (k8) -- (-0.1,-1.8);
\draw[->][color=violet] (-0.5,-1.8) -- (k7) ..controls +(-2.3,0) and +(-2.3,0).. (k1) ;
\draw[color=blue] (k3) -- (k4) ..controls +(1.3,0) and +(1.3,0).. (k6) -- (-0.1,-1.2);
\draw[->][color=blue] (-0.5,-1.2) -- (k5) ..controls +(-1.3,0) and +(-1.3,0).. (k3) ;
\draw  (k9) -- (k10) ..controls +(1.8,0) and +(1.8,0).. (k11) -- (-0.1,-1.5);
 \draw[->]  (-0.5,-1.5) -- (k12) ..controls +(-1.8,0) and +(-1.8,0).. (k9) ;

\begin{scope}[xshift=-4cm, yshift=0cm]
\draw [very thick, color=red](2.5,-1.6) -- (2.5,-1.75);
\draw[->] [very thick, color=red](2.5,-1.85) -- (2.5,-2.1);
\draw [very thick, color=red] (2.2,-2.1) node {$a_1$} ;
\end{scope}

\begin{scope}[xshift=-5cm, yshift=0.3cm]
\draw [very thick, color=red](2.5,-1.6) -- (2.5,-1.75);
\draw[->] [very thick, color=red](2.5,-1.85) -- (2.5,-2.05);
\draw [very thick, color=red] (2.2,-1.95) node {$a_2$} ;
\end{scope}

\begin{scope}[xshift=-6cm, yshift=0.6cm]
\draw [very thick, color=red](2.5,-1.6) -- (2.5,-1.75);
\draw[->] [very thick, color=red](2.5,-1.85) -- (2.5,-2.05);
\draw [very thick, color=red] (2.2,-1.5) node {$a_3$} ;
\end{scope}

\end{scope}

\begin{scope}[xshift=0cm,rotate=0,scale=1]
\coordinate (yh) at (0,-0.7) ;
\coordinate (yb) at (0,-2.3) ;
\draw (0.3,-1.1) -- (0.3,-0.5);
\draw (0.3,-1.3) -- (0.3,-1.4);
\draw (0.3,-1.6) -- (0.3,-1.7);
\draw (0.3,-0.3) -- (0.3,0) -- (-0.3,0) -- (-0.3,-1.5);
\draw [<-] (-0.3,-1.5) -- (-0.3,-3) -- (0.3,-3) -- (0.3,-1.9) ;
\draw  (-0.65,-2.7) node {$L_4$} ;
\end{scope}

\coordinate (th) at (0,3) ;
\coordinate (td) at (5.5,0) ;
\coordinate (tb) at (0,-3) ;
\coordinate (tg) at (-5.5,0) ;

\draw [color=magenta, very thick] (2,0.4) -- (2,-0.4) -- (-0.2,-0.4);
\draw [->] [color=magenta, very thick] (-0.4,-0.4) -- (-1.5,-0.4) ;
\draw [color=magenta, very thick] (-1.5,-0.4) -- (-2,-0.4) -- (-2,0.4) -- (2,0.4) ;
\draw [color=magenta](-1.5,-0.15) node {$a_4$} ;

\end{scope}
\end{tikzpicture}
\caption{The keychain link, for $e=3$} \label{fig keychain 4}
\end{figure}

Let $e \geqslant 1$. Let $L$ be the $(e+1)$-component link $KC_{e+1} = T(e,0) \cup H_v$. We call it the \textit{$(e+1)$-keychain link}, see Figure \ref{fig keychain 4}. Let us call $L_1, \ldots L_e$ the $e$ parallel components of $T(e,0)$ and $L_{e+1} = H_v$ the one that circles them all.
The link exterior $M_L$ is homeomorphic to $S^1 \times W$ where $W$ is a disc with $e$ punctures. Thus the link group $G_L = \pi_1(M_L)$ is isomorphic to $\F[g_1, \ldots g_e] \times \Z$.
The abelianisation $\alpha_L: G_L \to \Z^{e+1}$ sends $a_i$, the meridian of $L_i$, to the $i$-th vector of the natural base of  $\Z^{e+1}$.

\begin{proposition} \label{torsion_keychain}
Let $e \geqslant 1$. 
 The $L^2$-Alexander torsion for the exterior of the\\
  $(e+1)$-component keychain link $L$  is non-zero for all admissible triples of the form $(G_L, (n_1 , \ldots,  n_{e+1}) \circ \alpha_L, \gamma)$ such that
$\gamma(a_{e+1})$ has infinite order in $G$
 and for all $t>0$. One has:
$$ T^{(2)}_{L, (n_1 , \ldots, n_{e+1})}(\gamma)(t)
\ \dot{=} \ \max(1,t)^{(e-1)|n_{e+1}|}.$$
\end{proposition}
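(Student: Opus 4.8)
The plan is to apply Proposition \ref{prop WS1} directly to the product structure $M_L \cong S^1 \times W$ recorded in the paragraph preceding the statement, where $W$ is a disc with $e$ holes. First I would set $X = M_L$, view $W$ as the base and the $S^1$-factor as a regular Seifert fibre, and assemble the two pieces of data that Proposition \ref{prop WS1} consumes: the Euler characteristic $\chi(W)$ and the class $i(T)$ of the fibre generator. Since $W$ deformation retracts onto a wedge of $e$ circles, $\chi(W) = 1-e$, so $-\chi(W) = e-1$; this already accounts for the exponent $(e-1)$ in the claimed formula.

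The heart of the argument is to compute the homology class of a regular fibre $f = i(T)$ in $H_1(M_L) \cong \Z^{e+1}$. I would argue that $f$ is isotopic in $M_L$ to the pushoff of any key $L_i$ determined by the product framing, and that this framing is the $0$-framing: two parallel components of $T(e,0)$ are parallel longitudes on the standardly embedded torus, hence cobound an annulus and have linking number $0$, so the product pushoff of $L_i$ is the preferred longitude $\lambda_i$. Consequently $[f] = [\lambda_i]$, which in $H_1(M_L)$ equals $\sum_{j \ne i} \mathrm{lk}(L_i,L_j)\,a_j = \mathrm{lk}(L_i,L_{e+1})\,a_{e+1} = a_{e+1}$, using $\mathrm{lk}(L_i,L_j)=0$ for distinct keys and $\mathrm{lk}(L_i,L_{e+1})=1$ for the ring. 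Thus $\alpha_L(i(T)) = \alpha_L(a_{e+1})$ is the $(e+1)$-st basis vector, whence $(\phi \circ i)(T) = \phi(a_{e+1}) = n_{e+1}$ and $(\gamma \circ i)(T) = \gamma(a_{e+1})$.

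With these identifications the hypothesis of Proposition \ref{prop WS1} that $(\gamma \circ i)(T)$ have infinite order in $G$ becomes exactly the assumption that $\gamma(a_{e+1})$ have infinite order. That proposition then furnishes weak acyclicity, determinant class (hence non-vanishing of the torsion for all $t>0$), and
$$T^{(2)}_{L, (n_1, \ldots, n_{e+1})}(\gamma)(t) \ \dot{=} \ \max(1,t)^{-\chi(W)\,|(\phi \circ i)(T)|} = \max(1,t)^{(e-1)|n_{e+1}|},$$
which is the desired conclusion.

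I expect the main obstacle to be the careful justification of the identity $[f] = a_{e+1}$: one must confirm both that the product fibre is genuinely a longitude of each key and that the product framing coincides with the $0$-framing, and it is reassuring to cross-check compatibility on the ring's boundary torus, where $f$ should appear as the meridian $a_{e+1}$ in agreement with $\mathrm{lk}(f, L_{e+1}) = 1$. Once this linking-number bookkeeping is settled, the remainder is a direct substitution into Proposition \ref{prop WS1}.
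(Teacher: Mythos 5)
Your proposal is correct and follows exactly the paper's route: the paper's entire proof is ``apply Proposition \ref{prop WS1} with $W$ a disc with $e$ punctures,'' and you have simply supplied the omitted bookkeeping, namely $\chi(W)=1-e$ and the identification of the fibre class $i(T)$ with the meridian $a_{e+1}$ (so that $(\phi\circ i)(T)=n_{e+1}$ and the infinite-order hypothesis on $(\gamma\circ i)(T)$ becomes the stated one on $\gamma(a_{e+1})$). Your linking-number verification of $[f]=a_{e+1}$ is sound.
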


\begin{proof}
We apply Proposition \ref{prop WS1} with $W$ a disc with $e$ punctures.
\end{proof}

\subsection{Connected sum for links} \label{Connected sum for links}

Let $L = L_1 \cup \ldots \cup L_{c+1}$ and  $L' = L'_1 \cup \ldots \cup L'_{d+1}$ be two non-split links in $S^3$ such that $L \cup L'$ is split. Let $L''$ be the $(c+d+1)$-component link obtained by
deleting small parts of $L_{c+1}$ and of $L'_{d+1}$
and then connecting them to form a single component denoted $L_{c+1}\sharp L'_{d+1}$ (in a way that respects the orientations of $L_{c+1}$ and of $L'_{d+1}$).
The link $L''$ is \textit{the connected sum of $L$ and $L'$ along the components $L_{c+1}$ and $L'_{d+1}$}, and we order its components in the following way:
\begin{align*}
L'' &= L''_1 \cup \ldots \cup L''_c \cup L''_{c+1} \cup \ldots \cup L''_{c+d} \cup L''_{c+d+1} \\
&= L_1 \cup \ldots \cup L_c \cup L'_{1} \cup \ldots \cup L'_{d} \cup (L_{c+1}\sharp L'_{d+1}).
\end{align*}
The manifold $M_{L''}$ is the toroidal gluing of $M_L$, $M_{L'}$ and a $3$-component keychain link $KC = T(2,0) \cup H_v = \mathcal{K}_1 \cup \mathcal{K}_2 \cup \mathcal{K}_3$, where $L_{c+1}$ is glued with $\mathcal{K}_1$, $L'_{d+1}$ is glued with $\mathcal{K}_2$, and the boundary of $\mathcal{K}_3$ becomes the boundary of $L''_{c+d+1}$. For details and examples we refer to \cite{Bud}.
Let $n_1, \ldots, n_{c+d+1} \in \Z$ and let $\gamma: G_{L''} \to G$ such that
 $(G_{L''}, (n_1, \ldots, n_{c+d+1}) \circ \alpha_{L''}, \gamma)$ is an admissible triple. Let $t>0$.
Let $J: M_{L} \hookrightarrow M_{L''}$ and $J':M_{L'} \hookrightarrow M_{L''}$ denote the inclusions associated with the toroidal gluing and $j,j'$ the induced injective group homomorphisms on the fundamental groups (see the following diagram for clarity).

\begin{tikzpicture}
[description/.style={fill=white,inner sep=2pt}] 
\matrix(a)[matrix of math nodes, row sep=2em, column sep=1.5em, text height=1.5ex, text depth=0.25ex] 
{ \partial(V(L_{c+1})) & M_L & \\  & M_{KC} & M_{L''} \\ \partial(V(L'_{d+1})) & M_{L'} & \\}; 
\path[->](a-1-1) edge node[below]{} (a-1-2); 
\path[->](a-1-1) edge node[below]{} (a-2-2); 
\path[->](a-3-1) edge node[above]{} (a-2-2);  
\path[->](a-3-1) edge node[above]{} (a-3-2);  
\path[->](a-1-2) edge node[above]{$J$} (a-2-3); 
\path[->](a-3-2) edge node[below]{$J'$} (a-2-3); 
\path[->](a-2-2) edge node[above]{} (a-2-3);  

\begin{scope}[xshift=5.4cm,rotate=0,scale=1]
[description/.style={fill=white,inner sep=2pt}] 
\matrix(a)[matrix of math nodes, row sep=2em, column sep=2em, text height=1.5ex, text depth=0.25ex] 
{ \Z^2 & G_L\\  & G_{KC} & G_{L''}  & G\\ \Z^2 & G_{L'}  & \Z^{c+d+1} & \Z\\}; 
\path[->](a-1-1) edge node[below]{} (a-1-2); 
\path[->](a-1-1) edge node[below]{} (a-2-2); 
\path[->](a-3-1) edge node[above]{} (a-2-2);  
\path[->](a-3-1) edge node[above]{} (a-3-2);  
\path[->](a-1-2) edge node[above]{$j$} (a-2-3); 
\path[->](a-3-2) edge node[below]{$j'$} (a-2-3); 
\path[->](a-2-2) edge node[above]{} (a-2-3);  
\path[->](a-2-3) edge node[above]{$\gamma$} (a-2-4);
\path[->](a-2-3) edge node[right]{$\alpha_{L''}$} (a-3-3);
\path[->](a-3-3) edge node[below]{$(n_1, \ldots, n_{c+d+1})$} (a-3-4);
\path[->, dotted](a-2-4) edge  (a-3-4);
\end{scope}
\end{tikzpicture}

We can see that 
$$(n_1, \ldots, n_{c+d+1}) \circ \alpha_{L''} \circ j = (n_1, \ldots, n_c, n_{c+d+1}) \circ \alpha_{L}$$
and that
$$(n_1, \ldots, n_{c+d+1}) \circ \alpha_{L''} \circ j' = (n_{c+1}, \ldots, n_{c+d}, n_{c+d+1}) \circ \alpha_{L'}$$
by checking these identities on each of the meridian curves of $L''$.
Let $m''_{c+d+1}$ denote a preferred meridian of $L''_{c+d+1}$. Then $m''_{c+d+1} = j(m_{c+1}) = j'(m'_{d+1})$ where
 $m_{c+1}$ is a preferred meridian of $L_{c+1}$ and
  $m'_{d+1}$ is a preferred meridian of $L'_{d+1}$.

\begin{theorem} \label{thm_torsion_sum}
Assume that \begin{itemize}
\item $C_*^{(2)}(M_L,(n_1, \ldots, n_c, n_{c+d+1}) \circ \alpha_{L}, \gamma \circ j,t)$ is weakly acyclic and of determinant class,
\item $C_*^{(2)}(M_{L'},(n_{c+1}, \ldots, n_{c+d}, n_{c+d+1}) \circ \alpha_{L'}, \gamma \circ j',t)$ is weakly acyclic and of determinant class,
\item $\gamma(m''_{c+d+1})$ is of infinite order in $G$,
\end{itemize}
then $C_*^{(2)}(M_{L''},(n_1, \ldots, n_{c+d+1}) \circ \alpha_{L''}, \gamma,t)$ is weakly acyclic and of determinant class, and
$$\dfrac{
T^{(2)}_{L'',(n_1, \ldots, n_{c+d+1})}(\gamma)(t) 
}{ \max(1,t)^{|n_{c+d+1}|}}
\ \dot{=} \ 
T^{(2)}_{L,(n_1, \ldots,n_c, n_{c+d+1})}(\gamma \circ j)(t)
\cdot
T^{(2)}_{L',(n_{c+1}, \ldots, n_{c+d}, n_{c+d+1})}(\gamma \circ j')(t)
.$$
\end{theorem}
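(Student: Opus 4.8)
The plan is to realize $M_{L''}$ as a toroidal gluing of the three pieces $M_L$, $M_{L'}$ and $M_{KC}$ along the two tori $\partial V(L_{c+1}) = \partial \mathcal{K}_1$ and $\partial V(L'_{d+1}) = \partial \mathcal{K}_2$, and then invoke the toroidal gluing formula of Proposition \ref{prop t2 JSJ}. Beyond the setup already recorded before the statement, the only two ingredients needed are (i) that each of the three pieces is weakly acyclic and of determinant class (with its restricted data), and (ii) that the restriction of $\gamma$ to each of the two gluing tori has infinite image. Granting these, Proposition \ref{prop t2 JSJ} applied with $k=2$ tori and $l=3$ pieces yields directly, writing $\phi'' = (n_1, \ldots, n_{c+d+1}) \circ \alpha_{L''}$ and $j_{KC}$ for the inclusion of the keychain piece,
$$T^{(2)}(M_{L''}, \phi'', \gamma)(t) \ \dot{=} \ T^{(2)}(M_L, \phi'' \circ j, \gamma \circ j)(t) \cdot T^{(2)}(M_{L'}, \phi'' \circ j', \gamma \circ j')(t) \cdot T^{(2)}(M_{KC}, \phi'' \circ j_{KC}, \gamma \circ j_{KC})(t).$$

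Next I would identify each factor. By the two restriction identities recorded just before the statement, $\phi'' \circ j = (n_1, \ldots, n_c, n_{c+d+1}) \circ \alpha_L$ and $\phi'' \circ j' = (n_{c+1}, \ldots, n_{c+d}, n_{c+d+1}) \circ \alpha_{L'}$, so the first two factors are exactly the torsions on the right-hand side of the claimed formula. For the keychain factor, note that $KC = T(2,0) \cup H_v$ is the $(e+1)$-keychain link with $e=2$, whose circling component $\mathcal{K}_3 = H_v$ is the one whose boundary becomes $\partial V(L''_{c+d+1})$; hence the meridian $a_3$ of $\mathcal{K}_3$ is sent by $\phi''$ to $n_{c+d+1}$. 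Proposition \ref{torsion_keychain} then certifies that $C_*^{(2)}(M_{KC},\phi'' \circ j_{KC}, \gamma \circ j_{KC},t)$ is weakly acyclic and of determinant class and computes $T^{(2)}(M_{KC}, \phi'' \circ j_{KC}, \gamma \circ j_{KC})(t) \ \dot{=} \ \max(1,t)^{(e-1)|n_{c+d+1}|} = \max(1,t)^{|n_{c+d+1}|}$. Substituting this and moving the factor $\max(1,t)^{|n_{c+d+1}|}$ across then gives exactly the stated equality.

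It remains to feed the two hypotheses into Proposition \ref{prop t2 JSJ}. Weak acyclicity and determinant class of $M_L$ and $M_{L'}$ are assumed, and that of $M_{KC}$ is supplied by Proposition \ref{torsion_keychain} precisely under the condition that $\gamma$ of the meridian of $H_v$ has infinite order. The one genuinely non-formal step is the infinite-image condition on the two gluing tori, and here the key geometric observation is that $M_{KC}$ is Seifert-fibered: concretely $M_{KC} \cong P \times S^1$ with $P$ a pair of pants, the complement of two parallel cores of the solid torus $S^3 \setminus V(H_v)$. Consequently its regular fiber lies on all three boundary tori, and this fiber is precisely the meridian $a_3$ of $H_v$, which is identified under the gluing with the meridian $m''_{c+d+1}$ of $L''_{c+d+1}$. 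Since $\gamma(m''_{c+d+1})$ is assumed of infinite order, the image under $\gamma$ of $\pi_1$ of each gluing torus contains this infinite-order element and is therefore infinite. I expect this identification of the keychain fiber with $m''_{c+d+1}$ — and the resulting reduction of all three infinite-image requirements to the single hypothesis on $\gamma(m''_{c+d+1})$ — to be the main obstacle; once it is settled, the remainder is bookkeeping on abelianizations together with the two already-established propositions.
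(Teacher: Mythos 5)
Your proposal is correct and follows essentially the same route as the paper: decompose $M_{L''}$ into $M_L$, $M_{L'}$ and the keychain piece, apply the toroidal gluing formula of Proposition \ref{prop t2 JSJ} together with the keychain computation of Proposition \ref{torsion_keychain} (with $e=2$), and reduce the infinite-image condition on the two gluing tori to the single hypothesis on $\gamma(m''_{c+d+1})$. Your phrasing of that last step via the product structure $P\times S^1$ and its fiber is just a restatement of the paper's observation that the preferred longitudes of $\mathcal{K}_1,\mathcal{K}_2$ are homotopic to the preferred meridian of $\mathcal{K}_3$, which maps to $m''_{c+d+1}$.
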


This theorem generalizes the sum formula for the $L^2$-Alexander invariant of knots proven in
\cite[Theorem 3.2]{BA13} (in this case $c=0, d=0, n_1=1$).

\begin{proof}
We use Proposition \ref{prop t2 JSJ} and Theorem \ref{torsion_keychain}. Since we assume that $\gamma(m''_{c+d+1})$ is of infinite order in $G$, it follows that the tori $\partial(V(\mathcal{K}_1))$ and $\partial(V(\mathcal{K}_2))$ have infinite image under $\gamma$, because their preferred longitudes are homotopic to a preferred meridian of $\mathcal{K}_3$ which is sent to $m''_{c+d+1}$.
The formula follows then from Proposition \ref{prop t2 JSJ} and Theorem \ref{torsion_keychain}. 
\end{proof}

\subsection{The link $T(e,ek) \cup H_v$} \label{e,ek inside torus}

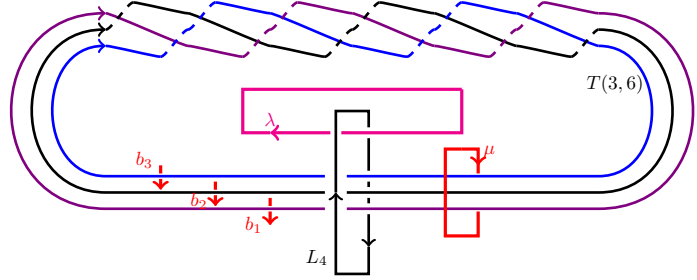
\begin{figure}[!h]
\centering
\begin{tikzpicture}[every path/.style={string ,black} , every node/.style={transform shape , knot crossing , inner sep=1.5 pt } ]

\begin{scope}[scale=0.8]

\begin{scope}[xshift=0cm,rotate=0,scale=0.9]

\begin{scope}[xshift=0cm,rotate=0,scale=1]

\coordinate (ch) at (-2,-1) ;
\coordinate (cb) at (-2,-2) ;
\coordinate (t1) at (-3,2) ;
\coordinate (t2) at (3,2) ;
\coordinate (t3) at (-3,1) ;
\coordinate (t4) at (3,1) ;
\coordinate (t5) at (-3,-1) ;
\coordinate (t6) at (3,-1) ;
\coordinate (t7) at (-3,-2) ;
\coordinate (t8) at (3,-2) ;
\coordinate (k1) at (-4.5,1.8) ;
\coordinate (k2) at (4.5,1.8) ;
\coordinate (k3) at (-4.5,1.2) ;
\coordinate (k4) at (4.5,1.2) ;
\coordinate (k5) at (-4.5,-1.2) ;
\coordinate (k6) at (4.5,-1.2) ;
\coordinate (k7) at (-4.5,-1.8) ;
\coordinate (k8) at (4.5,-1.8) ;
\coordinate (k9) at (-4.5,1.5) ;
\coordinate (k10) at (4.5,1.5) ;
\coordinate (k11) at (4.5,-1.5) ;
\coordinate (k12) at (-4.5,-1.5) ;
\draw[color=violet] (k2) ..controls +(2.3,0) and +(2.3,0).. (k8) -- (-0.1,-1.8);
\draw[->][color=violet] (-0.5,-1.8) -- (k7) ..controls +(-2.3,0) and +(-2.3,0).. (k1) ;
\draw[color=blue] (k4) ..controls +(1.3,0) and +(1.3,0).. (k6) -- (-0.1,-1.2);
\draw[->][color=blue] (-0.5,-1.2) -- (k5) ..controls +(-1.3,0) and +(-1.3,0).. (k3) ;
\draw  (k10) ..controls +(1.8,0) and +(1.8,0).. (k11) -- (-0.1,-1.5);
 \draw[->]  (-0.5,-1.5) -- (k12) ..controls +(-1.8,0) and +(-1.8,0).. (k9) ;

\begin{scope}[xshift=-4cm, yshift=0cm]
\draw [very thick, color=red](2.5,-1.6) -- (2.5,-1.75);
\draw[->] [very thick, color=red](2.5,-1.85) -- (2.5,-2.1);
\draw [very thick, color=red] (2.2,-2.1) node {$b_1$} ;
\end{scope}

\begin{scope}[xshift=-5cm, yshift=0.3cm]
\draw [very thick, color=red](2.5,-1.6) -- (2.5,-1.75);
\draw[->] [very thick, color=red](2.5,-1.85) -- (2.5,-2.05);
\draw [very thick, color=red] (2.2,-1.95) node {$b_2$} ;
\end{scope}

\begin{scope}[xshift=-6cm, yshift=0.6cm]
\draw [very thick, color=red](2.5,-1.6) -- (2.5,-1.75);
\draw[->] [very thick, color=red](2.5,-1.85) -- (2.5,-2.05);
\draw [very thick, color=red] (2.2,-1.5) node {$b_3$} ;
\end{scope}

\begin{scope}[xshift=-4.5cm]
\coordinate (kh) at (0,1.8) ;
\coordinate (km) at (0,1.5) ;
\coordinate (kb) at (0,1.2) ;
\coordinate (khi) at (0.5,2) ;
\coordinate (kbi) at (1,1) ;
\coordinate (kh') at (1.5,1.8) ;
\coordinate (km') at (1.5,1.5) ;
\coordinate (kb') at (1.5,1.2) ;
\draw  [color=violet](kh) ..controls +(0.1,0) and +(-0.1,0).. (kb') ;
\draw [style=dashed] (km) ..controls +(0.1,0) and +(-0.1,0).. (khi) ;
\draw  (khi) ..controls +(0.1,0) and +(-0.1,0).. (kh') ;
\draw  [color=blue](kb) ..controls +(0.1,0) and +(-0.1,0).. (kbi) ;
\draw [color=blue][style=dashed] (kbi) ..controls +(0.1,0) and +(-0.1,0).. (km') ;
\end{scope}

\begin{scope}[xshift=-3cm]
\coordinate (kh) at (0,1.8) ;
\coordinate (km) at (0,1.5) ;
\coordinate (kb) at (0,1.2) ;
\coordinate (khi) at (0.5,2) ;
\coordinate (kbi) at (1,1) ;
\coordinate (kh') at (1.5,1.8) ;
\coordinate (km') at (1.5,1.5) ;
\coordinate (kb') at (1.5,1.2) ;
\draw  (kh) ..controls +(0.1,0) and +(-0.1,0).. (kb') ;
\draw [color=blue][style=dashed] (km) ..controls +(0.1,0) and +(-0.1,0).. (khi) ;
\draw  [color=blue](khi) ..controls +(0.1,0) and +(-0.1,0).. (kh') ;
\draw  [color=violet](kb) ..controls +(0.1,0) and +(-0.1,0).. (kbi) ;
\draw [color=violet][style=dashed] (kbi) ..controls +(0.1,0) and +(-0.1,0).. (km') ;
\end{scope}

\begin{scope}[xshift=-1.5cm]
\coordinate (kh) at (0,1.8) ;
\coordinate (km) at (0,1.5) ;
\coordinate (kb) at (0,1.2) ;
\coordinate (khi) at (0.5,2) ;
\coordinate (kbi) at (1,1) ;
\coordinate (kh') at (1.5,1.8) ;
\coordinate (km') at (1.5,1.5) ;
\coordinate (kb') at (1.5,1.2) ;
\draw  [color=blue](kh) ..controls +(0.1,0) and +(-0.1,0).. (kb') ;
\draw [color=violet][style=dashed] (km) ..controls +(0.1,0) and +(-0.1,0).. (khi) ;
\draw  [color=violet](khi) ..controls +(0.1,0) and +(-0.1,0).. (kh') ;
\draw  (kb) ..controls +(0.1,0) and +(-0.1,0).. (kbi) ;
\draw [style=dashed] (kbi) ..controls +(0.1,0) and +(-0.1,0).. (km') ;
\end{scope}

\begin{scope}[xshift=0cm]
\coordinate (kh) at (0,1.8) ;
\coordinate (km) at (0,1.5) ;
\coordinate (kb) at (0,1.2) ;
\coordinate (khi) at (0.5,2) ;
\coordinate (kbi) at (1,1) ;
\coordinate (kh') at (1.5,1.8) ;
\coordinate (km') at (1.5,1.5) ;
\coordinate (kb') at (1.5,1.2) ;
\draw  [color=violet](kh) ..controls +(0.1,0) and +(-0.1,0).. (kb') ;
\draw [style=dashed] (km) ..controls +(0.1,0) and +(-0.1,0).. (khi) ;
\draw  (khi) ..controls +(0.1,0) and +(-0.1,0).. (kh') ;
\draw  [color=blue](kb) ..controls +(0.1,0) and +(-0.1,0).. (kbi) ;
\draw [color=blue][style=dashed] (kbi) ..controls +(0.1,0) and +(-0.1,0).. (km') ;
\end{scope}

\begin{scope}[xshift=1.5cm]
\coordinate (kh) at (0,1.8) ;
\coordinate (km) at (0,1.5) ;
\coordinate (kb) at (0,1.2) ;
\coordinate (khi) at (0.5,2) ;
\coordinate (kbi) at (1,1) ;
\coordinate (kh') at (1.5,1.8) ;
\coordinate (km') at (1.5,1.5) ;
\coordinate (kb') at (1.5,1.2) ;
\draw  (kh) ..controls +(0.1,0) and +(-0.1,0).. (kb') ;
\draw [color=blue][style=dashed] (km) ..controls +(0.1,0) and +(-0.1,0).. (khi) ;
\draw  [color=blue](khi) ..controls +(0.1,0) and +(-0.1,0).. (kh') ;
\draw  [color=violet](kb) ..controls +(0.1,0) and +(-0.1,0).. (kbi) ;
\draw [color=violet][style=dashed] (kbi) ..controls +(0.1,0) and +(-0.1,0).. (km') ;
\end{scope}

\begin{scope}[xshift=3cm]
\coordinate (kh) at (0,1.8) ;
\coordinate (km) at (0,1.5) ;
\coordinate (kb) at (0,1.2) ;
\coordinate (khi) at (0.5,2) ;
\coordinate (kbi) at (1,1) ;
\coordinate (kh') at (1.5,1.8) ;
\coordinate (km') at (1.5,1.5) ;
\coordinate (kb') at (1.5,1.2) ;
\draw  [color=blue](kh) ..controls +(0.1,0) and +(-0.1,0).. (kb') ;
\draw [color=violet][style=dashed] (km) ..controls +(0.1,0) and +(-0.1,0).. (khi) ;
\draw  [color=violet](khi) ..controls +(0.1,0) and +(-0.1,0).. (kh') ;
\draw  (kb) ..controls +(0.1,0) and +(-0.1,0).. (kbi) ;
\draw [style=dashed] (kbi) ..controls +(0.1,0) and +(-0.1,0).. (km') ;
\end{scope}

\begin{scope}[xshift=2cm,rotate=0,scale=1]
\coordinate (yh) at (0,-0.7) ;
\coordinate (yb) at (0,-2.3) ;
\draw [<-] [very thick, color=red] (0.3,-1) -- (0.3,-0.7) -- (0,-0.7);
\draw [very thick, color=red] (0.3,-0.9) -- (0.3,-1.15);
\draw [very thick, color=red] (0,-0.7) -- (-0.3,-0.7) -- (-0.3,-2.3) -- (0.3,-2.3) -- (0.3,-2) -- (0.3,-1.85) ;
\draw [very thick, color=red] (0.5,-0.75) node {$\mu$} ;
\end{scope}
\draw  (4.8,0.5) node {$T(3,6)$} ;
\end{scope}

\begin{scope}[xshift=0cm,rotate=0,scale=1]
\coordinate (yh) at (0,-0.7) ;
\coordinate (yb) at (0,-2.3) ;
\draw (0.3,-1.1) -- (0.3,-0.5);
\draw (0.3,-1.3) -- (0.3,-1.4);
\draw (0.3,-1.6) -- (0.3,-1.7);
\draw [->] (0.3,-1.9) -- (0.3,-2.5);
\draw (0.3,-0.3) -- (0.3,0) -- (-0.3,0) -- (-0.3,-1.5);
\draw [<-] (-0.3,-1.5) -- (-0.3,-3) -- (0.3,-3) -- (0.3,-2.5) ;
\draw  (-0.65,-2.7) node {$L_4$} ;
\end{scope}

\coordinate (th) at (0,3) ;
\coordinate (td) at (5.5,0) ;
\coordinate (tb) at (0,-3) ;
\coordinate (tg) at (-5.5,0) ;

\draw [color=magenta, very thick] (2,0.4) -- (2,-0.4) -- (-0.2,-0.4);
\draw [->] [color=magenta, very thick] (-0.4,-0.4) -- (-1.5,-0.4) ;
\draw [color=magenta, very thick] (-1.5,-0.4) -- (-2,-0.4) -- (-2,0.4) -- (2,0.4) ;
\draw [color=magenta](-1.5,-0.15) node {$\lambda$} ;

\end{scope}
\end{scope}
\end{tikzpicture}
\caption{The link $T(3,6) \cup H_v$} \label{fig torus link inside torus}
\end{figure}

We consider the link $L = T(e,ek) \cup H_v = L_1 \cup \ldots \cup L_e \cup L_{e+1}$. An example is drawn in Figure \ref{fig torus link inside torus} for $e=3, k=2$. We compute the $L^2$-Alexander torsions of its exterior.
Let $\lambda$ denote a meridian of $H_v$ and $\mu$ a preferred longitude of $H_v$. Remark that $\lambda, \mu$ are respectively a longitude and a meridian of the torus on which $T(e,ek)$ is drawn. Let $b_i$ denote the meridians of the components of $T(e,ek)$, as in Figure \ref{fig torus link inside torus}.

\begin{proposition} \label{prop_e,ek_inside_torus}
 The $L^2$-Alexander torsion for the exterior of the $(e+1)$-compo-\\
 nent link 
 $L = T(e,ek) \cup H_v$  is non-zero for all $t>0$ and for all admissible triples of the form
 $(G_L, (n_1,\ldots,n_e,n_{e+1}) \circ \alpha_L, \gamma)$ such that
$\gamma(\lambda \mu^{k} )$ has infinite order in $G$. One has:
$$ T^{(2)}_{L, (n_1,\ldots,n_e,n_{e+1})}(\gamma)(t)
\ \dot{=} \  \max(1,t)^{(e-1)|n_{e+1} + k(n_1 + \ldots + n_e)|}.$$
\end{proposition}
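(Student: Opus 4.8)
The plan is to realize the exterior $M_L$ of $L = T(e,ek)\cup H_v$ as homeomorphic to the exterior $M_{KC}$ of the $(e+1)$-keychain link $KC_{e+1} = T(e,0)\cup H_v$ from Section \ref{sec:key}, and then to transport the already-established computation of Proposition \ref{torsion_keychain} across this homeomorphism using the simple homotopy invariance of Proposition \ref{prop L2 torsion simple homotopy}. The entire content of the proof is concentrated in tracking how the cohomology class $\phi = (n_1,\ldots,n_{e+1})\circ\alpha_L$ is read off on the keychain side.

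First I would construct the homeomorphism. The exterior $S^3\setminus V(H_v)$ of the unknot $H_v$ is a solid torus $V$, on a concentric torus of which $T(e,ek)$ is drawn as $e$ parallel $(1,k)$-curves. A twist homeomorphism $\Psi\colon V\to V$ of the solid torus, rotating the meridian disks through $k$ full turns as one travels around the core, straightens these curves into the $e$ parallel $(1,0)$-curves of $T(e,0)$; hence $\Psi$ restricts to a homeomorphism $\Phi\colon M_L \to M_{KC}$ of link exteriors. Since any homeomorphism of compact $3$-manifolds is a simple homotopy equivalence (by the theorems of Chapman and Cohen recalled after Proposition \ref{prop L2 torsion simple homotopy}), Proposition \ref{prop L2 torsion simple homotopy} gives, for every admissible triple,
$$T^{(2)}(M_L,\phi,\gamma)(t) \ \dot{=}\ T^{(2)}\bigl(M_{KC}, \phi\circ\Phi_*^{-1}, \gamma\circ\Phi_*^{-1}\bigr)(t).$$

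The key step is the computation of $\Phi_*$ on peripheral data. The twist $\Psi$ does nothing to the small meridians $b_i$ of the components of $T(e,ek)$, so $\Phi_*$ sends each $b_i$ to the meridian $a_i$ of the corresponding keychain component, leaving the first $e$ coordinates untouched. On the boundary torus $\partial V(H_v)$, however, $\Psi$ acts as a Dehn twist along the longitude $\mu$ of $H_v$, so that $\Phi_*([\lambda]) = [\lambda] \pm k[\mu]$, where $(\lambda,\mu)$ is the meridian--longitude pair of $H_v$. Feeding in $[\mu] = \sum_{i=1}^{e} \mathrm{lk}(H_v,L_i)\,[b_i] = \sum_{i=1}^{e} [b_i]$ and $[\lambda]=a_{e+1}$ in $H_1(M_{KC})$, I would conclude that $\Phi_*^{-1}$ carries the keychain meridian $a_{e+1}$ of $H_v$ to $\lambda\mu^{\pm k}$, so that the transported class equals $(n_1,\ldots,n_e,\,n_{e+1}+k(n_1+\cdots+n_e))\circ\alpha_{KC}$ on the keychain, the sign of $k$ being fixed by the orientation convention chosen for $T(e,ek)$. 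Simultaneously, the hypothesis of Proposition \ref{torsion_keychain} that the image of $a_{e+1}$ have infinite order pulls back to exactly the stated requirement that $\gamma(\lambda\mu^k)$ have infinite order.

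Finally I would apply Proposition \ref{torsion_keychain} with these new coordinates: the admissibility, weak acyclicity and determinant-class conditions are preserved under the homeomorphism, and the keychain formula returns $\max(1,t)^{(e-1)\,|n_{e+1}+k(n_1+\cdots+n_e)|}$, which is the claim. The only non-formal point, and the main obstacle, is the middle step: correctly identifying the Dehn-twist action of $\Psi$ on $\partial V(H_v)$ and thereby producing the combination $n_{e+1}+k(n_1+\cdots+n_e)$; once that linear-algebra bookkeeping is in place, the rest is transport through a homeomorphism together with the previously proved keychain computation.
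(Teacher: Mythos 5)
Your proposal is correct and follows essentially the same route as the paper: identify $M_L$ with the keychain exterior $M_{KC}$ via $k$ twists of the solid torus $S^3\setminus V(H_v)$, track the induced isomorphism on generators (meridians $b_i\leftrightarrow a_i$ and $a_{e+1}\leftrightarrow\lambda\mu^{k}$, whence the transported class $(n_1,\ldots,n_e,n_{e+1}+k(n_1+\cdots+n_e))\circ\alpha_{KC}$), and conclude by Proposition \ref{torsion_keychain} together with the simple homotopy invariance of Proposition \ref{prop L2 torsion simple homotopy}. The only cosmetic difference is that you derive the peripheral bookkeeping from the Dehn-twist action on $\partial V(H_v)$ (leaving a sign to orientation conventions), whereas the paper records the same data directly as a table of generators and their images under the abelianizations.
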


\begin{proof}
Let $KC = T(e,0) \cup H'_v = K_1 \cup \ldots \cup K_e \cup K_{e+1}$ be the $(e+1)$-component keychain link (see Figure \ref{fig keychain 4}).
Then the exteriors $M_{L}$ and $M_{KC}$ are homeomorphic, by a sequence of $k$  twists of the solid torus $S^3 \setminus V(K_{e+1}) \cong S^3 \setminus V(L_{e+1})$.
The induced group isomorphism $\psi: G_{KC} \to G_L$ relates the generators written in the two figures in the following way:
\begin{align*} 
\Z^{e+1}  \overset{\alpha_{KC}}{\longleftarrow} G_{KC}  &\overset{\psi}{\longrightarrow} G_L \overset{\alpha_L}{\longrightarrow} \Z^{e+1}\\
(1,\ldots,0,0)   \longleftarrow a_1  &\longleftrightarrow b_1 \longmapsto (1,\ldots,0,0) \\
&   \ \ \vdots  \\
(0,\ldots,1,0) \longleftarrow   a_{e}  &\longleftrightarrow b_{e} \longmapsto (0,\ldots,1,0) \\
(0,\ldots,0,1)  \longleftarrow a_{e+1}  &\longleftrightarrow \lambda \mu^{k}  \longmapsto (k,\ldots,k,1)
\end{align*} 
Thus, for all integers $n_1,\ldots,n_e,n_{e+1}$, 
$$(n_1,\ldots,n_e,n_{e+1}) \circ \alpha_L \circ \psi = 
(n_1,\ldots,n_e,n_{e+1} + k n_1 + \ldots + k n_e) \circ \alpha_{KC}.$$
Let $\phi$ denote $(n_1,\ldots,n_e,n_{e+1}) \circ \alpha_L$.
Since $(G_{KC}, \phi \circ \psi, \gamma \circ \psi)$ is an admissible triple and since $\gamma( \psi(a_{e+1})) = \gamma(\lambda \mu^{k} )$ has infinite order in $G$, it follows  from Theorem \ref{torsion_keychain} that
$C^{(2)}_*(M_{KC}, \phi \circ \psi, \gamma \circ \psi,t)$ is weakly acyclic and of determinant class and 
 \begin{align*}
 T^{(2)}(M_{KC}, \phi \circ \psi, \gamma \circ \psi)(t)  
& \  \dot{=}  \ \max(1,t)^{(e-1) | \phi(\psi(a_{e+1}))|} \\
& = \max(1,t)^{(e-1) |n_{e+1} + k n_1 + \ldots + k n_e|}.
 \end{align*}
Since $M_L$ and $M_{KC}$ are homeomorphic, they are simple homotopy equivalent and the result follows from Proposition \ref{prop L2 torsion simple homotopy}.
\end{proof}

\subsection{The link $T(p,q) \cup H_v \cup H_h$}
\label{sec:tpqhh}

We consider the $3$-component link $L = T(p,q) \cup H_v \cup H_h$ where $p \neq 0$ and $p,q$ are relatively prime. An example for $p=3, q=4$ is drawn in Figure \ref{fig torus knot inside thick}.

\begin{figure}[!h]
\centering
\begin{tikzpicture}[every path/.style={string ,black} , every node/.style={transform shape , knot crossing , inner sep=1.5 pt } ]
\begin{scope}[scale=0.8]

\begin{scope}[xshift=0cm, yshift =0cm, rotate=0,scale=1]

\begin{scope}[xshift=0cm,rotate=0,scale=1]

\coordinate (ch) at (-2,-1) ;
\coordinate (cb) at (-2,-2) ;
\coordinate (t1) at (-3,2) ;
\coordinate (t2) at (3,2) ;
\coordinate (t3) at (-3,1) ;
\coordinate (t4) at (3,1) ;
\coordinate (t5) at (-3,-1) ;
\coordinate (t6) at (3,-1) ;
\coordinate (t7) at (-3,-2) ;
\coordinate (t8) at (3,-2) ;
\coordinate (k1) at (-3,1.8) ;
\coordinate (k2) at (3,1.8) ;
\coordinate (k3) at (-3,1.2) ;
\coordinate (k4) at (3,1.2) ;
\coordinate (k5) at (-3,-1.2) ;
\coordinate (k6) at (3,-1.2) ;
\coordinate (k7) at (-3,-1.8) ;
\coordinate (k8) at (3,-1.8) ;
\coordinate (k9) at (-3,1.5) ;
\coordinate (k10) at (3,1.5) ;
\coordinate (k11) at (3,-1.5) ;
\coordinate (k12) at (-3,-1.5) ;
\draw (k2) ..controls +(2.3,0) and +(2.3,0).. (k8) -- (-0.2,-1.8);
\draw[->] (-0.4,-1.8) -- (k7) ..controls +(-2.3,0) and +(-2.3,0).. (k1) ;
\draw (k4) ..controls +(1.3,0) and +(1.3,0).. (k6) --(-0.2,-1.2);
\draw[->] (-0.4,-1.2) -- (k5) ..controls +(-1.3,0) and +(-1.3,0).. (k3) ;
\draw  (k10) ..controls +(1.8,0) and +(1.8,0).. (k11) -- (-0.2,-1.5);
\draw[->] (-0.4,-1.5) -- (k12) ..controls +(-1.8,0) and +(-1.8,0).. (k9) ;

\begin{scope}[xshift=-3cm]
\coordinate (kh) at (0,1.8) ;
\coordinate (km) at (0,1.5) ;
\coordinate (kb) at (0,1.2) ;
\coordinate (khi) at (0.5,2) ;
\coordinate (kbi) at (1,1) ;
\coordinate (kh') at (1.5,1.8) ;
\coordinate (km') at (1.5,1.5) ;
\coordinate (kb') at (1.5,1.2) ;
\draw  (kh) ..controls +(0.1,0) and +(-0.1,0).. (kb') ;
\draw [style=dashed] (km) ..controls +(0.1,0) and +(-0.1,0).. (khi) ;
\draw  (khi) ..controls +(0.1,0) and +(-0.1,0).. (kh') ;
\draw  (kb) ..controls +(0.1,0) and +(-0.1,0).. (kbi) ;
\draw [style=dashed] (kbi) ..controls +(0.1,0) and +(-0.1,0).. (km') ;
\end{scope}

\begin{scope}[xshift=-1.5cm]
\coordinate (kh) at (0,1.8) ;
\coordinate (km) at (0,1.5) ;
\coordinate (kb) at (0,1.2) ;
\coordinate (khi) at (0.5,2) ;
\coordinate (kbi) at (1,1) ;
\coordinate (kh') at (1.5,1.8) ;
\coordinate (km') at (1.5,1.5) ;
\coordinate (kb') at (1.5,1.2) ;
\draw  (kh) ..controls +(0.1,0) and +(-0.1,0).. (kb') ;
\draw [style=dashed] (km) ..controls +(0.1,0) and +(-0.1,0).. (khi) ;
\draw  (khi) ..controls +(0.1,0) and +(-0.1,0).. (kh') ;
\draw  (kb) ..controls +(0.1,0) and +(-0.1,0).. (kbi) ;
\draw [style=dashed] (kbi) ..controls +(0.1,0) and +(-0.1,0).. (km') ;
\end{scope}

\begin{scope}[xshift=0cm]
\coordinate (kh) at (0,1.8) ;
\coordinate (km) at (0,1.5) ;
\coordinate (kb) at (0,1.2) ;
\coordinate (khi) at (0.5,2) ;
\coordinate (kbi) at (1,1) ;
\coordinate (kh') at (1.5,1.8) ;
\coordinate (km') at (1.5,1.5) ;
\coordinate (kb') at (1.5,1.2) ;
\draw  (kh) ..controls +(0.1,0) and +(-0.1,0).. (kb') ;
\draw [style=dashed] (km) ..controls +(0.1,0) and +(-0.1,0).. (khi) ;
\draw  (khi) ..controls +(0.1,0) and +(-0.1,0).. (kh') ;
\draw  (kb) ..controls +(0.1,0) and +(-0.1,0).. (kbi) ;
\draw [style=dashed] (kbi) ..controls +(0.1,0) and +(-0.1,0).. (km') ;
\end{scope}

\begin{scope}[xshift=1.5cm]
\coordinate (kh) at (0,1.8) ;
\coordinate (km) at (0,1.5) ;
\coordinate (kb) at (0,1.2) ;
\coordinate (khi) at (0.5,2) ;
\coordinate (kbi) at (1,1) ;
\coordinate (kh') at (1.5,1.8) ;
\coordinate (km') at (1.5,1.5) ;
\coordinate (kb') at (1.5,1.2) ;
\draw  (kh) ..controls +(0.1,0) and +(-0.1,0).. (kb') ;
\draw [style=dashed] (km) ..controls +(0.1,0) and +(-0.1,0).. (khi) ;
\draw  (khi) ..controls +(0.1,0) and +(-0.1,0).. (kh') ;
\draw  (kb) ..controls +(0.1,0) and +(-0.1,0).. (kbi) ;
\draw [style=dashed] (kbi) ..controls +(0.1,0) and +(-0.1,0).. (km') ;
\end{scope}

\coordinate (x9) at (-3,1.4) ;
\coordinate (x10) at (3,1.4) ;
\coordinate (x11) at (3,-1.4) ;
\coordinate (x12) at (-3,-1.4) ;
\coordinate (x13) at (0,-1.4) ;

\draw [->] [color=blue] (x10) ..controls +(1.7,0) and +(1.7,0).. (x11) -- (x13);
\draw [color=blue] (x13) -- (x12) ..controls +(-1.7,0) and +(-1.7,0).. (x9)--(x10) ;

\draw [color=blue] (1,-1.65) node {$H_h$} ;

\begin{scope}[xshift=-2cm,rotate=0,scale=1]
\coordinate (yh) at (0,-0.7) ;
\coordinate (yb) at (0,-2.3) ;
\draw [<-] [very thick, color=brown] (0.3,-1) -- (0.3,-0.7) -- (0,-0.7);
\draw [very thick, color=brown] (0.3,-0.9) -- (0.3,-1.15);
\draw [very thick, color=brown] (0,-0.7) -- (-0.3,-0.7) -- (-0.3,-2.3) -- (0.3,-2.3) -- (0.3,-2) -- (0.3,-1.85) ;
\draw [very thick, color=brown] (0.5,-0.75) node {$y$} ;
\end{scope}

\draw  (5,1.7) node {$T(p,q)$} ;

\end{scope}

\begin{scope}[xshift=0cm,rotate=0,scale=1]
\coordinate (yh) at (0,-0.7) ;
\coordinate (yb) at (0,-2.3) ;
\draw [color=violet](0.3,-1.1) -- (0.3,-0.5);
\draw [color=violet](0.3,-0.3) -- (0.3,0) -- (-0.3,0) -- (-0.3,-1.5);
\draw [color=violet][<-] (-0.3,-1.5) -- (-0.3,-3) -- (0.3,-3) -- (0.3,-1.9) ;

\draw [color=violet] (-0.65,-2.7) node {$H_v$} ;
\end{scope}

\coordinate (th) at (0,3) ;
\coordinate (td) at (5.5,0) ;
\coordinate (tb) at (0,-3) ;
\coordinate (tg) at (-5.5,0) ;

\draw [color=magenta, very thick] (2,0.4) -- (2,-0.4) -- (-0.2,-0.4);
\draw [->] [color=magenta, very thick] (-0.4,-0.4) -- (-1.5,-0.4) ;
\draw [color=magenta, very thick] (-1.5,-0.4) -- (-2,-0.4) -- (-2,0.4) -- (2,0.4) ;
\draw [color=magenta](-1.5,-0.15) node {$\lambda$} ;
\end{scope}
\end{scope}
\end{tikzpicture}
\caption{The link $T(3,4) \cup H_v \cup H_h$} \label{fig torus knot inside thick}
\end{figure}
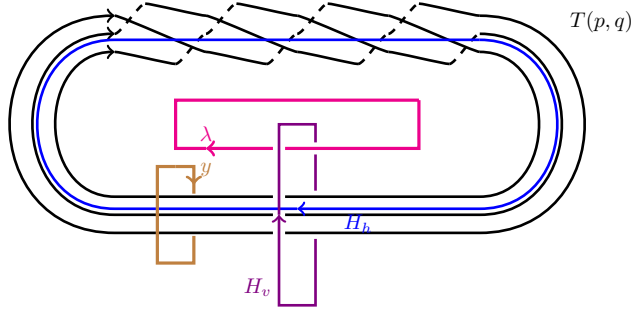

 Tubular neighbourhoods of $H_h$ and $H_v$ have a torus $T$ as a common boundary. 
The manifold $M_{H} = S^3 \setminus (V(H_v) \cup V(H_h)) $ is homeomorphic to a thickened torus $T \times [-1;1]$. We identify $T \cong T \times \{0\}$ to the torus on which the torus knot $T(p,q)$ is drawn.
The space $Z = T \setminus V(T(p,q))$ is homeomorphic to an annulus. Let $\delta$ be a curve that generates $\pi_1(Z)$. The curve $\delta$ is thus locally parallel to the torus knot $T(p,q)$. See Figure \ref{fig torus delta} for clarity.

\begin{figure}[!h]
\centering
\begin{tikzpicture}[every path/.style={string } , every node/.style={transform shape , knot crossing , inner sep=1.5 pt } ]

\begin{scope}[scale=0.8]

\begin{scope}[xshift=0cm,rotate=0,scale=1]

\draw (-2,-0.8) node {$T$} ;

\coordinate (ch) at (-2,-1) ;
\coordinate (cb) at (-2,-2) ;
\draw [style=dotted] (ch) ..controls +(0.5,0) and +(0.5,0).. (cb) ;
\draw (ch) ..controls +(-0.5,0) and +(-0.5,0).. (cb) ;

\coordinate (t1) at (-3,2) ;
\coordinate (t2) at (3,2) ;
\coordinate (t3) at (-3,1) ;
\coordinate (t4) at (3,1) ;
\coordinate (t5) at (-3,-1) ;
\coordinate (t6) at (3,-1) ;
\coordinate (t7) at (-3,-2) ;
\coordinate (t8) at (3,-2) ;
\draw (t1) -- (t2) ..controls +(2.5,0) and +(2.5,0).. (t8) -- (t7) ..controls +(-2.5,0) and +(-2.5,0).. (t1) ;
\draw (t3) -- (t4) ..controls +(1,0) and +(1,0).. (t6) -- (t5) ..controls +(-1,0) and +(-1,0).. (t3) ;

\coordinate (k1) at (-3,1.8) ;
\coordinate (k2) at (3,1.8) ;
\coordinate (k3) at (-3,1.2) ;
\coordinate (k4) at (3,1.2) ;
\coordinate (k5) at (-3,-1.2) ;
\coordinate (k6) at (3,-1.2) ;
\coordinate (k7) at (-3,-1.8) ;
\coordinate (k8) at (3,-1.8) ;
\coordinate (k9) at (-3,1.5) ;
\coordinate (k10) at (3,1.5) ;
\coordinate (k11) at (3,-1.5) ;
\coordinate (k12) at (-3,-1.5) ;
\draw (k2) ..controls +(2.3,0) and +(2.3,0).. (k8) -- (k7) ..controls +(-2.3,0) and +(-2.3,0).. (k1) ;
\draw (k4) ..controls +(1.3,0) and +(1.3,0).. (k6) -- (k5) ..controls +(-1.3,0) and +(-1.3,0).. (k3) ;
\draw [style=dotted] (k10) ..controls +(1.8,0) and +(1.8,0).. (k11) -- (k12) ..controls +(-1.8,0) and +(-1.8,0).. (k9) ;

\begin{scope}[xshift=-3cm]
\coordinate (kh) at (0,1.8) ;
\coordinate (km) at (0,1.5) ;
\coordinate (kb) at (0,1.2) ;
\coordinate (khi) at (0.5,2) ;
\coordinate (kbi) at (1,1) ;
\coordinate (kh') at (1.5,1.8) ;
\coordinate (km') at (1.5,1.5) ;
\coordinate (kb') at (1.5,1.2) ;
\draw  (kh) ..controls +(0.1,0) and +(-0.1,0).. (kb') ;
\draw [style=dotted] (km) ..controls +(0.1,0) and +(-0.1,0).. (khi) ;
\draw  (khi) ..controls +(0.1,0) and +(-0.1,0).. (kh') ;
\draw  (kb) ..controls +(0.1,0) and +(-0.1,0).. (kbi) ;
\draw [style=dotted] (kbi) ..controls +(0.1,0) and +(-0.1,0).. (km') ;
\coordinate (kbr) at (0.25,1) ;
\coordinate (khr) at (1.25,2) ;
\draw [style=dotted, color=red] (kbr) ..controls +(0.1,0) and +(-0.1,0).. (khr) ;
\draw [color=red] (0,1.1) ..controls +(0.1,0) and +(-0.1,0).. (kbr) ;
\draw [->] [color=red] (khr) ..controls +(0.1,0) and +(-0.1,0).. (1.5,1.9) ;
\draw [->] [color=red] (0,1.9) ..controls +(0.3,0) and +(-0.1,0).. (1.5,1.4) ;
\draw [->] [color=red] (0,1.4) ..controls +(0.1,0) and +(-0.1,0).. (1.5,1.1) ;
\end{scope}

\begin{scope}[xshift=-1.5cm]
\coordinate (kh) at (0,1.8) ;
\coordinate (km) at (0,1.5) ;
\coordinate (kb) at (0,1.2) ;
\coordinate (khi) at (0.5,2) ;
\coordinate (kbi) at (1,1) ;
\coordinate (kh') at (1.5,1.8) ;
\coordinate (km') at (1.5,1.5) ;
\coordinate (kb') at (1.5,1.2) ;
\draw  (kh) ..controls +(0.1,0) and +(-0.1,0).. (kb') ;
\draw [style=dotted] (km) ..controls +(0.1,0) and +(-0.1,0).. (khi) ;
\draw  (khi) ..controls +(0.1,0) and +(-0.1,0).. (kh') ;
\draw  (kb) ..controls +(0.1,0) and +(-0.1,0).. (kbi) ;
\draw [style=dotted] (kbi) ..controls +(0.1,0) and +(-0.1,0).. (km') ;
\coordinate (kbr) at (0.25,1) ;
\coordinate (khr) at (1.25,2) ;
\draw [style=dotted, color=red] (kbr) ..controls +(0.1,0) and +(-0.1,0).. (khr) ;
\draw [color=red] (0,1.1) ..controls +(0.1,0) and +(-0.1,0).. (kbr) ;
\draw [->] [color=red] (khr) ..controls +(0.1,0) and +(-0.1,0).. (1.5,1.9) ;
\draw [->] [color=red] (0,1.9) ..controls +(0.3,0) and +(-0.1,0).. (1.5,1.4) ;
\draw [->] [color=red] (0,1.4) ..controls +(0.1,0) and +(-0.1,0).. (1.5,1.1) ;
\end{scope}

\begin{scope}[xshift=0cm]
\coordinate (kh) at (0,1.8) ;
\coordinate (km) at (0,1.5) ;
\coordinate (kb) at (0,1.2) ;
\coordinate (khi) at (0.5,2) ;
\coordinate (kbi) at (1,1) ;
\coordinate (kh') at (1.5,1.8) ;
\coordinate (km') at (1.5,1.5) ;
\coordinate (kb') at (1.5,1.2) ;
\draw  (kh) ..controls +(0.1,0) and +(-0.1,0).. (kb') ;
\draw [style=dotted] (km) ..controls +(0.1,0) and +(-0.1,0).. (khi) ;
\draw  (khi) ..controls +(0.1,0) and +(-0.1,0).. (kh') ;
\draw  (kb) ..controls +(0.1,0) and +(-0.1,0).. (kbi) ;
\draw [style=dotted] (kbi) ..controls +(0.1,0) and +(-0.1,0).. (km') ;
\coordinate (kbr) at (0.25,1) ;
\coordinate (khr) at (1.25,2) ;
\draw [style=dotted, color=red] (kbr) ..controls +(0.1,0) and +(-0.1,0).. (khr) ;
\draw [color=red] (0,1.1) ..controls +(0.1,0) and +(-0.1,0).. (kbr) ;
\draw [->] [color=red] (khr) ..controls +(0.1,0) and +(-0.1,0).. (1.5,1.9) ;
\draw [->] [color=red] (0,1.9) ..controls +(0.3,0) and +(-0.1,0).. (1.5,1.4) ;
\draw [->] [color=red] (0,1.4) ..controls +(0.1,0) and +(-0.1,0).. (1.5,1.1) ;
\end{scope}

\begin{scope}[xshift=1.5cm]
\coordinate (kh) at (0,1.8) ;
\coordinate (km) at (0,1.5) ;
\coordinate (kb) at (0,1.2) ;
\coordinate (khi) at (0.5,2) ;
\coordinate (kbi) at (1,1) ;
\coordinate (kh') at (1.5,1.8) ;
\coordinate (km') at (1.5,1.5) ;
\coordinate (kb') at (1.5,1.2) ;
\draw  (kh) ..controls +(0.1,0) and +(-0.1,0).. (kb') ;
\draw [style=dotted] (km) ..controls +(0.1,0) and +(-0.1,0).. (khi) ;
\draw  (khi) ..controls +(0.1,0) and +(-0.1,0).. (kh') ;
\draw  (kb) ..controls +(0.1,0) and +(-0.1,0).. (kbi) ;
\draw [style=dotted] (kbi) ..controls +(0.1,0) and +(-0.1,0).. (km') ;
\coordinate (kbr) at (0.25,1) ;
\coordinate (khr) at (1.25,2) ;
\draw [style=dotted, color=red] (kbr) ..controls +(0.1,0) and +(-0.1,0).. (khr) ;
\draw [color=red] (0,1.1) ..controls +(0.1,0) and +(-0.1,0).. (kbr) ;
\draw [->] [color=red] (khr) ..controls +(0.1,0) and +(-0.1,0).. (1.5,1.9) ;
\draw [->] [color=red] (0,1.9) ..controls +(0.3,0) and +(-0.1,0).. (1.5,1.4) ;
\draw [->] [color=red] (0,1.4) ..controls +(0.1,0) and +(-0.1,0).. (1.5,1.1) ;
\end{scope}

\coordinate (k1') at (-3,1.9) ;
\coordinate (k2') at (3,1.9) ;
\coordinate (k3') at (-3,1.1) ;
\coordinate (k4') at (3,1.1) ;
\coordinate (k5') at (-3,-1.1) ;
\coordinate (k6') at (3,-1.1) ;
\coordinate (k7') at (-3,-1.9) ;
\coordinate (k8') at (3,-1.9) ;
\coordinate (k9') at (-3,1.4) ;
\coordinate (k10') at (3,1.4) ;
\coordinate (k11') at (3,-1.4) ;
\coordinate (k12') at (-3,-1.4) ;
\draw [->] [color=red] (k2') ..controls +(2.4,0) and +(2.4,0).. (k8') -- (k7') ..controls +(-2.4,0) and +(-2.4,0).. (k1') ;
\draw [->] [color=red] (k4') ..controls +(1.2,0) and +(1.2,0).. (k6') -- (k5') ..controls +(-1.2,0) and +(-1.2,0).. (k3') ;
\draw [->] [color=red] (k10') ..controls +(1.7,0) and +(1.7,0).. (k11') -- (k12') ..controls +(-1.7,0) and +(-1.7,0).. (k9') ;
\draw [color=red] (0,-1.31) node {$\delta$} ;
\end{scope}
\end{scope}
\end{tikzpicture}
\caption{The generator $\delta$ of $\pi_1(Z)$} \label{fig torus delta}
\end{figure}
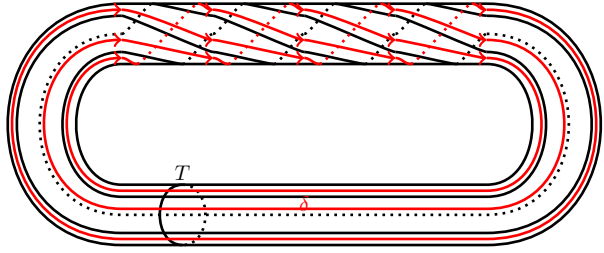

\begin{proposition} \label{prop_p,q_inside_thick}
 The $L^2$-Alexander torsion for the exterior of the $3$-component link 
 $L = T(p,q) \cup H_v \cup H_h$  is non-zero for all $t>0$ and for all admissible triples 
 $(G_L, (n_1,n_2,n_3) \circ \alpha_L, \gamma)$ such that the homotopy class of the curve $\delta$ is sent by $\gamma$ to an element of infinite order. One has:
$$ T^{(2)}_{L, (n_1,n_2,n_3)}(\gamma)(t)
\ \dot{=} \  \max(1,t)^{|pq n_1 + p n_2 + q n_3|}.$$
\end{proposition}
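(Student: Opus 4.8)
The plan is to identify the exterior $M_L$ with a product $P \times S^1$, where $P$ is a pair of pants, and then to invoke Proposition~\ref{prop WS1}. Since $p$ and $q$ are coprime, the homology class of $T(p,q)$ is primitive in $H_1(T;\Z)\cong\Z^2$, so I can choose a product splitting $T\cong S^1_a\times S^1_b$ of the torus carrying $T(p,q)$ for which $T(p,q)=S^1_a\times\{*\}$ with $*\in S^1_b$. Recalling that $M_H=S^3\setminus(V(H_v)\cup V(H_h))$ is the thickened torus $T\times[-1,1]$ and that $T(p,q)$ lies on $T\times\{0\}$, this splitting extends to $M_H\cong S^1_a\times(S^1_b\times[-1,1])$, and a tubular neighbourhood of $T(p,q)$ may be taken as $S^1_a\times D$ for a small disk $D$ around the interior point $(*,0)$ of the annulus $S^1_b\times[-1,1]$. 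Deleting it gives
$$M_L\ \cong\ S^1_a\times\bigl((S^1_b\times[-1,1])\setminus\mathring D\bigr)\ =\ S^1_a\times P,$$
where $P$ is an annulus with an open disk removed, i.e. a pair of pants with $\chi(P)=-1$; its three boundary circles give, after taking the product with $S^1_a$, the three boundary tori of $M_L$. Under this homeomorphism the fibre $S^1_a\times\{\mathrm{pt}\}$ is exactly the core $\delta$ of the annulus $Z=T\setminus V(T(p,q))$.

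Second, I would apply Proposition~\ref{prop WS1} with $W=P$ and with $T$ the chosen generator of $\pi_1(S^1_a)\cong\Z$, so that $i(T)=[\delta]$. The hypothesis that $\gamma(\delta)$ has infinite order is exactly the condition that $(\gamma\circ i)(T)$ has infinite order, so Proposition~\ref{prop WS1} applies: $C^{(2)}_*(M_L,\phi,\gamma,t)$ is weakly acyclic and of determinant class and, since $-\chi(P)=1$,
$$T^{(2)}_{L,(n_1,n_2,n_3)}(\gamma)(t)\ \dot{=}\ \max(1,t)^{-\chi(P)\,\lvert(\phi\circ i)(T)\rvert}=\max(1,t)^{\lvert\phi([\delta])\rvert}.$$

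It then remains to compute $\phi([\delta])=(n_1,n_2,n_3)\cdot\alpha_L([\delta])$. The three coordinates of $\alpha_L([\delta])\in\Z^3$ are the linking numbers of $\delta$ with $L_1=T(p,q)$, $L_2=H_v$ and $L_3=H_h$. Since $\delta$ is a pushoff of $T(p,q)$ along the torus $T$, its linking number with $T(p,q)$ is the surface framing, namely $pq$; and as $\delta$ is isotopic to $T(p,q)$ in $S^3\setminus(H_v\cup H_h)$, it links $H_v$ and $H_h$ with the same numbers as $T(p,q)$, which a direct computation from $T(p,q)=\{z_1^p=z_2^q\}$ gives as $p$ and $q$ respectively. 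Hence $\alpha_L([\delta])=(pq,p,q)$ and $\phi([\delta])=pq\,n_1+p\,n_2+q\,n_3$, which yields the announced formula.

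The hard part will be the geometric identification $M_L\cong P\times S^1$ together with the verification that the product fibre coincides with $\delta$: this is what makes Proposition~\ref{prop WS1} applicable and pins down the exponent as $\lvert(\phi\circ i)(T)\rvert=\lvert\phi([\delta])\rvert$. The linking-number bookkeeping (especially $\mathrm{lk}(\delta,T(p,q))=pq$, the torus framing) is then routine, but must be done in the original $S^3$ coordinates rather than in the $S^1_a\times S^1_b$ coordinates, since the splitting of $T$ scrambles the meridian--longitude framings of $H_v$ and $H_h$. Alternatively, the same result can be obtained by cutting $M_L$ along the annulus $Z$ and applying the gluing formula of Proposition~\ref{prop mayer vietoris}, but the product description above is the most economical route.
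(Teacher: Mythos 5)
Your argument is correct, but it takes a genuinely different route from the paper. The paper cuts $M_L$ along the annulus $Z=T\setminus V(T(p,q))$ into the two thickened tori $A=N_1\cup Z$ and $B=N_2\cup Z$ (neighbourhoods of $H_v$ and $H_h$), computes $T^{(2)}(A)\doteq T^{(2)}(B)\doteq 1$ and $T^{(2)}(Z)\doteq \max(1,t)^{-|\phi(\delta)|}$ from Corollaries \ref{torsion solid torus} and \ref{torsion torus}, and assembles the answer with the gluing formula of Proposition \ref{prop mayer vietoris}; a by-product is the explicit Seifert--van Kampen presentation of $G_L$ in which $\delta=\lambda^py^q=c^pm^q$, so that $\phi(\delta)$ is read off from the meridian--longitude coordinates of $H_v$ and $H_h$. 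You instead exhibit $M_L$ as a product $P\times S^1$ ($P$ a pair of pants) using the primitivity of $[T(p,q)]$ in $H_1(T)$, and apply Proposition \ref{prop WS1} once, with $-\chi(P)=1$ and the fibre identified with $\delta$. This is more economical and makes the Seifert structure explicit (it is essentially the same mechanism as Proposition \ref{torsion_keychain} and the twist argument of Proposition \ref{prop_e,ek_inside_torus}: your $M_L$ is homeomorphic to the $3$-keychain exterior with re-marked boundary); the price, which you correctly identify and pay, is that the exponent $\phi([\delta])$ must then be recovered by a linking-number computation in the original $S^3$ coordinates, using $\mathrm{lk}(\delta,T(p,q))=pq$ (the torus framing) and $\mathrm{lk}(\delta,H_v)=p$, $\mathrm{lk}(\delta,H_h)=q$. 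Two small points: you should cite Proposition \ref{prop L2 torsion simple homotopy} (together with Chapman--Cohen) to transport the computation from the product CW-structure on $P\times S^1$ to $M_L$, as the paper does in the analogous step of Proposition \ref{prop_e,ek_inside_torus}; and note that the fibre through a point of $Z$ is only freely homotopic (conjugate) to $\delta$, which is harmless since both the infinite-order hypothesis and the value of $\phi$ are conjugation-invariant.
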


\begin{proof}
The torus $T$ separates $M_{H}$ in two thickened tori $N_1 = V(H_v)\setminus H_v$ and \\
$N_2 = V(H_h) \setminus H_h$.
Let $X= M_{L}$, $A=  N_1 \cup T \setminus V(T(p,q))$, $B = N_2 \cup T \setminus V(T(p,q))$ and $Z = T \setminus V(T(p,q))$, so that $X = A \cup B$ and $ Z = A \cap B$, and $X,A,B,Z$ are path connected. We pick a base point $pt \in Z$ for all the following fundamental groups.

The space $Z$ is an annulus, and its group $\pi_Z = \pi_1(Z)$ is isomorphic to $\Z$ and is generated by an element $\delta$ that runs between the $p$ strands of $T(p,q)$.
The space $A$ is homeomorphic to a thickened torus, by filling the missing surface lines of $V(T(p,q))$. Let $(y,\lambda)$ be a preferred meridian-longitude system of $A$, as in Figure \ref{fig torus knot inside thick}. Note that $\lambda$ acts as a meridian of the unknot $H_v$. The group $\pi_A = \pi_1(A)$ has the presentation $\langle y, \lambda | y \lambda = \lambda y \rangle$ and is isomorphic to $\Z^2$.
Similarly, the space $B$ is homeomorphic to a thickened torus, by filling the missing surface lines of $V(T(p,q))$. Let $(m,c)$ be a preferred meridian-longitude system of $A$. Note that $m$ acts as a meridian of the unknot $H_h$. The group $\pi_B = \pi_1(B)$ has the presentation $\langle m,c | m c =c m \rangle$ and is isomorphic to $\Z^2$.
The element $z$ is sent to $\lambda^p y^q $ in $\pi_A$ and to $c^p m^q$ in $\pi_B$.
Thus, by the Seifert van Kampen theorem, the group $G_L = \pi_1(X)$ admits the presentation
$$ \langle y, \lambda, c, m | \lambda^p y^q = c^p m^q, y \lambda = \lambda y,  m c =c m  \rangle.$$ 

\begin{tikzpicture}
[description/.style={fill=white,inner sep=2pt}] 
\matrix(a)[matrix of math nodes, row sep=2em, column sep=1.5em, text height=1.5ex, text depth=0.25ex] 
{ & A\\ Z & & X\\ & B\\}; 
\path[->](a-2-1) edge node[below]{$I_B$} (a-3-2); 
\path[->](a-2-1) edge node[above]{$I_A$} (a-1-2);  
\path[->](a-1-2) edge node[above]{$J_A$} (a-2-3); 
\path[->](a-3-2) edge node[below]{$J_B$} (a-2-3); 
\path[->](a-2-1) edge node[above]{$I$} (a-2-3);  

\begin{scope}[xshift=6cm,rotate=0,scale=1]
[description/.style={fill=white,inner sep=2pt}] 
\matrix(a)[matrix of math nodes, row sep=2em, column sep=2em, text height=1.5ex, text depth=0.25ex] 
{ & \pi_1(A)\\ \pi_1(Z) & & G_{L} & & G\\ & \pi_1(B) & & \Z^{3} & \Z\\}; 
\path[->](a-2-1) edge node[below]{$i_B$} (a-3-2); 
\path[->](a-2-1) edge node[above]{$i_A$} (a-1-2);  
\path[->](a-1-2) edge node[above]{$j_A$} (a-2-3); 
\path[->](a-3-2) edge node[below]{$j_B$} (a-2-3); 
\path[->](a-2-1) edge node[above]{$i$} (a-2-3);
\path[->](a-2-3) edge node[above]{$\gamma$} (a-2-5);
\path[->](a-2-3) edge node[below]{$\alpha_{L}$} (a-3-4);
\path[->](a-3-4) edge node[below]{$(n_1,n_2,n_3)$} (a-3-5);
\path[->, dotted](a-2-5) edge  (a-3-5);
\end{scope}
\end{tikzpicture}

Let $\phi$ denote the homomorphism  $(n_1,n_2,n_3) \circ  \alpha_L$.
We assume that the homotopy class of $\delta$ is sent by $\gamma$ to an element of infinite order, i.e. $\gamma \circ i(\delta) = \gamma(c^p m^q) = \gamma(\lambda^p y^q)$ has infinite order in $G$. Therefore $\gamma(\pi_A)$ and $\gamma(\pi_B)$ are infinite and it follows from Corollaries \ref{torsion solid torus} and \ref{torsion torus} that
 the three $\NN(G)$-cellular chain complexes 
$$C^{(2)}_*(Z,\phi \circ i, \gamma \circ i,t),
C^{(2)}_*(A,\phi \circ j_A, \gamma \circ j_A,t),
C^{(2)}_*(B,\phi \circ j_B, \gamma \circ j_B,t)$$ are
weakly acyclic and of determinant class, and
$$T^{(2)}(A,\phi \circ j_A, \gamma \circ j_A)(t) \ \dot{=} \ 
T^{(2)}(B,\phi \circ j_B, \gamma \circ j_B)(t) \ \dot{=} \  1,$$
$$ T^{(2)}(Z,\phi \circ i, \gamma \circ i)(t) \ \dot{=} \  \max(1,t)^{-|\phi(c^p m^q)|}.$$

Hence, by Proposition \ref{prop mayer vietoris}, 
  $C^{(2)}_*(M_{L},(n_1,n_2,n_3) \circ  \alpha_L, \gamma,t)$ is weakly acyclic and of determinant class as well, and
$$T^{(2)}_{L,(n_1,n_2,n_3)}(\gamma)(t)
\ \dot{=} \  \max(1,t)^{|\phi(c^p m^q)|}
= \max(1,t)^{|pq n_1 + p n_2 + q n_3|}.
$$
\end{proof}

\subsection{The link $T(ep,eq) \cup H_v \cup H_h$}
\label{sec:tepeqhh}

We can now compute the $L^2$-Alexander torsions for a general link 
$L = T(ep,eq) \cup H_v \cup H_h$ by using the fact that the torus link $T(ep,eq)$ is a $(e,epq)$-cable on the torus knot
$K = T(p,q)$ (one can find a proof of this result due to P. Feller in \cite[Section A.2]{BAthesis}). We refer to \cite{BZ} for the definition of a cable link.

\begin{figure}[!h]
\centering
\begin{tikzpicture}[every path/.style={string ,black} , every node/.style={transform shape , knot crossing , inner sep=3.5 pt } ]

\begin{scope}[scale=0.9]

\begin{scope}[xshift=0cm, yshift=0cm,rotate=0,scale=0.8]

\begin{scope}[xshift=-4cm, yshift=-1.15cm,rotate=0,scale=1]
\draw [color=purple] (0,-0.4)--(0,-0.55);
\draw [color=purple][->] (0,-0.65)--(0,-0.9);
\draw [color=purple] (0.3,-0.9) node {$b_1$};
\end{scope}

\begin{scope}[xshift=-5cm, yshift=-0.79cm,rotate=0,scale=1]
\draw [color=purple] (0,-0.4)--(0,-0.55);
\draw [color=purple][->] (0,-0.65)--(0,-0.9);
\draw [color=purple] (0.3,-0.4) node {$b_2$};
\end{scope}

\draw [scale=1.2,color=red] (1.5,-3) node {$\partial(V(H_K)) \cong T_K \cong \partial(V(K))$} ;

\draw [scale=1.2,color=red] (-5,-4) node {$T_K$} ;

\draw [color=red] (-1.7,-2.2) node {$H_K$} ;
\draw [color=magenta] (-3.5,-0.7) node {$l_K$} ;

\begin{scope}[xshift=-6cm, yshift=0cm,rotate=0,scale=0.7]

\begin{scope}[xshift=0cm, yshift=0cm,rotate=0,scale=0.5]

\begin{scope}[xshift=10cm, yshift=0cm,rotate=0,scale=1]
\draw[->] [color=red](1,-8) -- (-1,-8) -- (-1,-3);
\draw [color=red](-1,-3) -- (-1,-2) -- (1,-2) -- (1,-2.4);
\draw [color=red](1,-2.8) -- (1,-3.6);
\draw [color=red](1,-4.2) -- (1,-4.8);
\draw [color=red](1,-5.3) -- (1,-8);

\draw [color=magenta][->](-1.2,-2.6) -- (-2,-2.6) -- (-2,-1.4) -- (0,-1.4);
\draw [color=magenta](0,-1.4) -- (2,-1.4) -- (2,-2.6) -- (-0.8,-2.6);

\end{scope}

\draw (20,0) ..controls +(2,0) and +(2,0).. (20,-4) --(9.2,-4);
\draw (8.8,-4) -- (0,-4) ..controls +(-2,0) and +(-2,0).. (0,0);
\draw [color=blue](20,1) ..controls +(3,0) and +(3,0).. (20,-5) --(9.2,-5);
\draw  [color=blue](8.8,-5) -- (0,-5) ..controls +(-3,0) and +(-3,0).. (0,1);

\draw (0,0) -- (2,0);
\draw (9,0) -- (10,0);
\draw (17,0) -- (20,0);
\draw (5,1) -- (6,1);
\draw (13,1) -- (14,1);

\draw[color=blue] (0,1) -- (2,1);
\draw[color=blue] (9,1) -- (10,1);
\draw[color=blue] (17,1) -- (20,1);
\draw[color=blue] (5,0) -- (6,0);
\draw[color=blue] (13,0) -- (14,0);

\begin{scope}[xshift=2cm, yshift=-3cm,rotate=0,scale=1]
\draw [color=blue](0,4) -- (1,4) ..controls +(0.5,0) and +(-0.5,0).. (2,3) -- (3,3);
\draw (0,3) -- (1,3);
\node (ca) at (1,3) { };
\node (cc) at (1.5,3.5) { };
\node (cb) at (2,4) { };
\draw (ca.center) .. controls (ca.2 north east) and (cc.2 south west) .. (cc);
\draw (cc) .. controls (cc.2 north east) and (cb.2 south west) .. (cb.center);
\draw (2,4) -- (3,4);
\end{scope}

\begin{scope}[xshift=6cm, yshift=-3cm,rotate=0,scale=1]
\draw (0,4) -- (1,4) ..controls +(0.5,0) and +(-0.5,0).. (2,3) -- (3,3);
\draw [color=blue](0,3) -- (1,3);
\node (ca) at (1,3) { };
\node (cc) at (1.5,3.5) { };
\node (cb) at (2,4) { };
\draw [color=blue](ca.center) .. controls (ca.2 north east) and (cc.2 south west) .. (cc);
\draw [color=blue](cc) .. controls (cc.2 north east) and (cb.2 south west) .. (cb.center);
\draw [color=blue](2,4) -- (3,4);
\end{scope}

\begin{scope}[xshift=10cm, yshift=-3cm,rotate=0,scale=1]
\draw [color=blue](0,4) -- (1,4) ..controls +(0.5,0) and +(-0.5,0).. (2,3) -- (3,3);
\draw (0,3) -- (1,3);
\node (ca) at (1,3) { };
\node (cc) at (1.5,3.5) { };
\node (cb) at (2,4) { };
\draw (ca.center) .. controls (ca.2 north east) and (cc.2 south west) .. (cc);
\draw (cc) .. controls (cc.2 north east) and (cb.2 south west) .. (cb.center);
\draw (2,4) -- (3,4);
\end{scope}

\begin{scope}[xshift=14cm, yshift=-3cm,rotate=0,scale=1]
\draw (0,4) -- (1,4) ..controls +(0.5,0) and +(-0.5,0).. (2,3) -- (3,3);
\draw [color=blue](0,3) -- (1,3);
\node (ca) at (1,3) { };
\node (cc) at (1.5,3.5) { };
\node (cb) at (2,4) { };
\draw [color=blue](ca.center) .. controls (ca.2 north east) and (cc.2 south west) .. (cc);
\draw [color=blue](cc) .. controls (cc.2 north east) and (cb.2 south west) .. (cb.center);
\draw [color=blue](2,4) -- (3,4);
\end{scope}

\end{scope}

\end{scope}

\draw [color=red] (4,0.7) node {$K = T(2,1)$} ;
\draw [color=blue] (6,-1.2) node {$H_h$} ;

\draw [very thick, color=violet] (3.4,-2.1) node {$H_v$} ;

\begin{scope}[xshift=5cm, yshift =-0.7cm, rotate=0,scale=0.6]

\begin{scope}[xshift=0cm,rotate=0,scale=1]

\coordinate (ch) at (-2,-1) ;
\coordinate (cb) at (-2,-2) ;

\coordinate (t1) at (-3,2) ;
\coordinate (t2) at (3,2) ;
\coordinate (t3) at (-3,1) ;
\coordinate (t4) at (3,1) ;
\coordinate (t5) at (-3,-1) ;
\coordinate (t6) at (3,-1) ;
\coordinate (t7) at (-3,-2) ;
\coordinate (t8) at (3,-2) ;

\coordinate (k1) at (-3,1.8) ;
\coordinate (k2) at (3,1.8) ;
\coordinate (k3) at (-3,1.2) ;
\coordinate (k4) at (3,1.2) ;
\coordinate (k5) at (-3,-1.2) ;
\coordinate (k6) at (3,-1.2) ;
\coordinate (k7) at (-3,-1.8) ;
\coordinate (k8) at (3,-1.8) ;
\coordinate (k9) at (-3,1.5) ;
\coordinate (k10) at (3,1.5) ;
\coordinate (k11) at (3,-1.5) ;
\coordinate (k12) at (-3,-1.5) ;
\draw [color=red](k2) ..controls +(2.3,0) and +(2.3,0).. (k8) -- (-0.4,-1.8);
\draw[->][color=red] (-0.4,-1.8) -- (k7) ..controls +(-2.3,0) and +(-2.3,0).. (k1) ;
\draw (k4)[color=red] ..controls +(1.3,0) and +(1.3,0).. (k6) --(-0.4,-1.2);
\draw[->][color=red] (-0.4,-1.2) -- (k5) ..controls +(-1.3,0) and +(-1.3,0).. (k3) ;

\coordinate (x9) at (-3,1.4) ;
\coordinate (x10) at (3,1.4) ;
\coordinate (x11) at (3,-1.4) ;
\coordinate (x12) at (-3,-1.4) ;
\coordinate (x13) at (0,-1.4) ;

\draw [->] [color=blue] (x10) ..controls +(1.7,0) and +(1.7,0).. (x11) -- (x13);
\draw [color=blue] (x13) -- (x12) ..controls +(-1.7,0) and +(-1.7,0).. (x9)--(x10) ;

\begin{scope}[xshift=-2cm,rotate=0,scale=1]
\coordinate (yh) at (0,-0.7) ;
\coordinate (yb) at (0,-2.3) ;
\draw [<-] [very thick, color=violet] (0.3,-1) -- (0.3,-0.6) -- (0,-0.6);
\draw [very thick, color=violet] (0.3,-0.9) -- (0.3,-1.15);
\draw [very thick, color=violet] (0,-0.6) -- (-0.3,-0.6) -- (-0.3,-2.3) -- (0.3,-2.3) -- (0.3,-2) -- (0.3,-1.85) ;
\end{scope}

\end{scope}

\draw[color=red] (-3,1.8) -- (3,1.2);
\draw[color=red] (-3,1.2) -- (-1,1.4); 
\draw[color=red] (1,1.6) -- (3,1.8);

\coordinate (th) at (0,3) ;
\coordinate (td) at (5.5,0) ;
\coordinate (tb) at (0,-3) ;
\coordinate (tg) at (-5.5,0) ;

\end{scope}

%M
\begin{scope}[xshift=-4cm, yshift=-6cm,rotate=0,scale=1]

\begin{scope}[xshift=0cm, yshift=0cm,rotate=0,scale=0.5]

\draw (20,0) ..controls +(2,0) and +(2,0).. (20,-4) --(9.2,-4);
\draw (9.2,-4) -- (0,-4) ..controls +(-2,0) and +(-2,0).. (0,0);
\draw [color=blue](20,1) ..controls +(3,0) and +(3,0).. (20,-5) --(9.2,-5);
\draw  [color=blue](9.2,-5) -- (0,-5) ..controls +(-3,0) and +(-3,0).. (0,1);
\draw (20,2) ..controls +(4,0) and +(4,0).. (20,-6) --(9.2,-6);
\draw (9.2,-6) -- (0,-6) ..controls +(-4,0) and +(-4,0).. (0,2);
\draw [color=blue](20,3) ..controls +(5,0) and +(5,0).. (20,-7) --(9.2,-7);
\draw  [color=blue](9.2,-7) -- (0,-7) ..controls +(-5,0) and +(-5,0).. (0,3);

\draw[color=red,style=dotted] (20,0-0.2) ..controls +(1.8,0) and +(1.8,0).. (20,-3.8) --(9.2,-3.8);
\draw [color=red,style=dotted](9.2,-3.8) -- (0,-3.8) ..controls +(-1.8,0) and +(-1.8,0).. (0,0-0.2);
\draw[color=red,style=dotted] (20,1.2) ..controls +(3.2,0) and +(3.2,0).. (20,-5.2) --(9.2,-5.2);
\draw [color=red,style=dotted](9.2,-5.2) -- (0,-5.2) ..controls +(-3.2,0) and +(-3.2,0).. (0,1.2);
\draw[color=red,style=dotted] (20,0-0.2+2) ..controls +(1.8+2,0) and +(1.8+2,0).. (20,-3.8-2) -- (9.2,-5.8);
\draw [color=red,style=dotted](9.2,-5.8) --(0,-3.8-2) ..controls +(-1.8-2,0) and +(-1.8-2,0).. (0,0-0.2+2);
\draw[color=red,style=dotted] (20,2+1.2) ..controls +(2+3.2,0) and +(2+3.2,0).. (20,-5.2-2) -- (9.2,-7.2);
\draw [color=red,style=dotted](9.2,-7.2) --(0,-5.2-2) ..controls +(-3.2-2,0) and +(-3.2-2,0).. (0,1.2+2);

\draw (0,0) -- (7,0);
\draw [color=blue](0,1) -- (4,1);
\draw (0,2) -- (1,2);
\draw [color=blue](0,3) -- (1,3);
\draw [color=blue](10,0) -- (17,0);
\draw (10,1) -- (14,1);
\draw [color=blue](7,2) -- (11,2);
\draw (4,3) -- (11,3);
\draw (17,2) -- (20,2);
\draw [color=blue](14,3) -- (20,3);

\draw[color=red,style=dotted] (0,-0.2) -- (7,-0.2);
\draw[color=red,style=dotted] (0,1.2) -- (4,1.2);
\draw[color=red,style=dotted] (0,1.8) -- (1,1.8);
\draw[color=red,style=dotted] (0,3.2) -- (1,3.2);
\draw[color=red,style=dotted] (10,0.8) -- (14,0.8);
\draw[color=red,style=dotted] (7,2.2) -- (11,2.2);
\draw[color=red,style=dotted] (17,1.8) -- (20,1.8);
\draw[color=red,style=dotted] (14,3.2) -- (20,3.2);

\begin{scope}[xshift=4cm, yshift=-2.2cm,rotate=0,scale=1, style=dotted]
\draw[color=red] (-3,4) -- (0,4) -- (1,4) ..controls +(0.5,0) and +(-0.5,0).. (2,3) -- (3,3);
\end{scope}

\begin{scope}[xshift=7cm, yshift=-3.2cm,rotate=0,scale=1, style=dotted]
\draw[color=red] (0,4) -- (1,4) ..controls +(0.5,0) and +(-0.5,0).. (2,3) -- (3,3) -- (13,3);
\end{scope}

\begin{scope}[xshift=11cm, yshift=-0.8cm,rotate=0,scale=1, style=dotted]
\draw[color=red] (-10,4) -- (0,4) -- (1,4) ..controls +(0.5,0) and +(-0.5,0).. (2,3) -- (3,3);
\end{scope}

\begin{scope}[xshift=14cm, yshift=-1.8cm,rotate=0,scale=1, style=dotted]
\draw[color=red] (0,4) -- (1,4) ..controls +(0.5,0) and +(-0.5,0).. (2,3) -- (3,3) -- (6,3);
\end{scope}

\begin{scope}[xshift=4cm, yshift=-1.8cm,rotate=0,scale=1, style=dotted]
\draw [color=red](0,3) -- (1,3);
\node (ca) at (1,3) { };
\node (cc) at (1.5,3.5) { };
\node (cb) at (2,4) { };
\draw[color=red] (ca.center) .. controls (ca.2 north east) and (cc.2 south west) .. (cc);
\draw[color=red] (cc) .. controls (cc.2 north east) and (cb.2 south west) .. (cb.center);
\draw[color=red] (2,4) -- (3,4);
\end{scope}

\begin{scope}[xshift=11cm, yshift=-0.8cm,rotate=0,scale=1, style=dotted]
\draw [color=red](0,3) -- (1,3);
\node (ca) at (1,3) { };
\node (cc) at (1.5,3.5) { };
\node (cb) at (2,4) { };
\draw[color=red] (ca.center) .. controls (ca.2 north east) and (cc.2 south west) .. (cc);
\draw[color=red] (cc) .. controls (cc.2 north east) and (cb.2 south west) .. (cb.center);
\draw[color=red] (2,4) -- (3,4);
\end{scope}

\begin{scope}[xshift=7cm, yshift=-3.2cm,rotate=0,scale=1, style=dotted]
\draw [color=red](0,3) -- (1,3);
\node (ca) at (1,3) { };
\node (cc) at (1.5,3.5) { };
\node (cb) at (2,4) { };
\draw[color=red] (ca.center) .. controls (ca.2 north east) and (cc.2 south west) .. (cc);
\draw[color=red] (cc) .. controls (cc.2 north east) and (cb.2 south west) .. (cb.center);
\draw[color=red] (2,4) -- (3,4);
\end{scope}

\begin{scope}[xshift=14cm, yshift=-2.2cm,rotate=0,scale=1, style=dotted]
\draw[color=red] (0,3) -- (1,3);
\node (ca) at (1,3) { };
\node (cc) at (1.5,3.5) { };
\node (cb) at (2,4) { };
\draw[color=red] (ca.center) .. controls (ca.2 north east) and (cc.2 south west) .. (cc);
\draw[color=red] (cc) .. controls (cc.2 north east) and (cb.2 south west) .. (cb.center);
\draw[color=red] (2,4) -- (3,4);
\end{scope}

\begin{scope}[xshift=1cm, yshift=-1cm,rotate=0,scale=1]
\draw [color=blue](0,4) -- (1,4) ..controls +(0.5,0) and +(-0.5,0).. (2,3) -- (3,3);
\draw (0,3) -- (1,3);
\node (ca) at (1,3) { };
\node (cc) at (1.5,3.5) { };
\node (cb) at (2,4) { };
\draw (ca.center) .. controls (ca.2 north east) and (cc.2 south west) .. (cc);
\draw (cc) .. controls (cc.2 north east) and (cb.2 south west) .. (cb.center);
\draw (2,4) -- (3,4);
\end{scope}

\begin{scope}[xshift=4cm, yshift=-2cm,rotate=0,scale=1]
\draw [color=blue](0,4) -- (1,4) ..controls +(0.5,0) and +(-0.5,0).. (2,3) -- (3,3);
\draw [color=blue](0,3) -- (1,3);
\node (ca) at (1,3) { };
\node (cc) at (1.5,3.5) { };
\node (cb) at (2,4) { };
\draw [color=blue](ca.center) .. controls (ca.2 north east) and (cc.2 south west) .. (cc);
\draw [color=blue](cc) .. controls (cc.2 north east) and (cb.2 south west) .. (cb.center);
\draw [color=blue](2,4) -- (3,4);
\end{scope}

\begin{scope}[xshift=7cm, yshift=-3cm,rotate=0,scale=1]
\draw [color=blue](0,4) -- (1,4) ..controls +(0.5,0) and +(-0.5,0).. (2,3) -- (3,3);
\draw (0,3) -- (1,3);
\node (ca) at (1,3) { };
\node (cc) at (1.5,3.5) { };
\node (cb) at (2,4) { };
\draw (ca.center) .. controls (ca.2 north east) and (cc.2 south west) .. (cc);
\draw (cc) .. controls (cc.2 north east) and (cb.2 south west) .. (cb.center);
\draw (2,4) -- (3,4);
\end{scope}

\begin{scope}[xshift=11cm, yshift=-1cm,rotate=0,scale=1]
\draw (0,4) -- (1,4) ..controls +(0.5,0) and +(-0.5,0).. (2,3) -- (3,3);
\draw [color=blue](0,3) -- (1,3);
\node (ca) at (1,3) { };
\node (cc) at (1.5,3.5) { };
\node (cb) at (2,4) { };
\draw [color=blue](ca.center) .. controls (ca.2 north east) and (cc.2 south west) .. (cc);
\draw [color=blue](cc) .. controls (cc.2 north east) and (cb.2 south west) .. (cb.center);
\draw [color=blue](2,4) -- (3,4);
\end{scope}

\begin{scope}[xshift=14cm, yshift=-2cm,rotate=0,scale=1]
\draw (0,4) -- (1,4) ..controls +(0.5,0) and +(-0.5,0).. (2,3) -- (3,3);
\draw (0,3) -- (1,3);
\node (ca) at (1,3) { };
\node (cc) at (1.5,3.5) { };
\node (cb) at (2,4) { };
\draw (ca.center) .. controls (ca.2 north east) and (cc.2 south west) .. (cc);
\draw (cc) .. controls (cc.2 north east) and (cb.2 south west) .. (cb.center);
\draw (2,4) -- (3,4);
\end{scope}

\begin{scope}[xshift=17cm, yshift=-3cm,rotate=0,scale=1]
\draw (0,4) -- (1,4) ..controls +(0.5,0) and +(-0.5,0).. (2,3) -- (3,3);
\draw [color=blue](0,3) -- (1,3);
\node (ca) at (1,3) { };
\node (cc) at (1.5,3.5) { };
\node (cb) at (2,4) { };
\draw [color=blue](ca.center) .. controls (ca.2 north east) and (cc.2 south west) .. (cc);
\draw [color=blue](cc) .. controls (cc.2 north east) and (cb.2 south west) .. (cb.center);
\draw [color=blue](2,4) -- (3,4);
\end{scope}
\end{scope}
\end{scope}
\end{scope}
\end{scope}
\end{tikzpicture}
\caption{The torus link $T(4,2)$ as a $(2,4)$ cable on $T(2,1)$} \label{fig t(4,2)}
\end{figure}
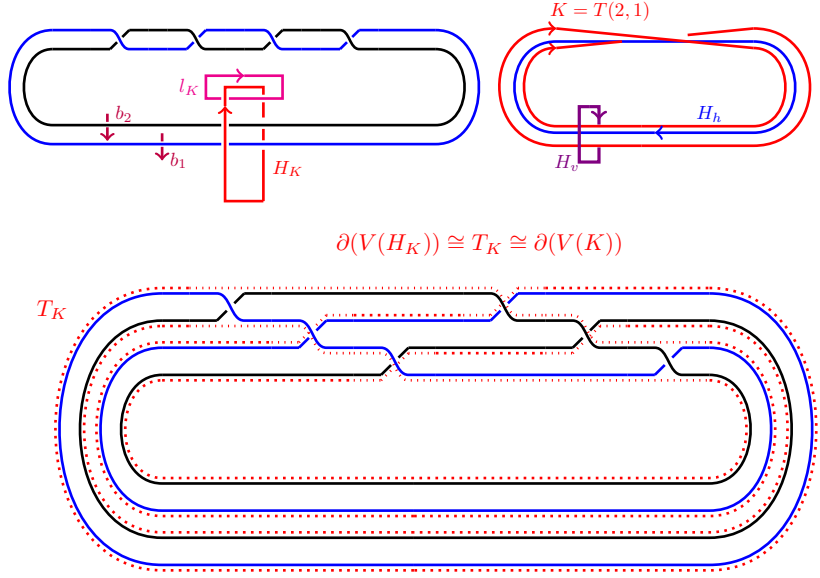

In Figure \ref{fig t(4,2)}, we draw a torus link $T(e,epq)$ inside a solid torus $S^3 \setminus V(H_K)$, the link $T(p,q) \cup H_v \cup H_h$, and the torus link $T(ep,eq)$ which is a $(e,epq)$-cable on $T(p,q)$ (we did not draw $H_v$ and $H_h$ in the third part in order to make the figure easier to read). Here $p =2, q=1, e=2$.
 One can see the  torus $T_K$ (drawn with red dotted lines) that separates $M_{T(4,2)}$ in the disjoint union of the exterior of the torus knot $T(2,1)$ in $S^3$ and the exterior of the torus link $T(2,4) \cup H_K$. This torus $T_K$ is the boundary of a tubular neighbourhood of $K=T(p,q)$. A preferred longitude $l_K$ of $K$ is drawn on the figure. We identify $S^3 \setminus V(H_K)$ to the solid torus $V(K)$; the component $H_K$ looks like a preferred meridian of $K$.
Let $M = M_L$ denote the exterior of  $L = T(ep,eq) \cup H_v \cup H_h$, $A = S^3 \setminus V(K \cup H_v \cup H_h)$ and $B = S^3 \setminus V(T(e,epq) \cup H_K)$ (in Figure \ref{fig t(4,2)}, $A$  is the exterior of the drawing up right and $B$ of the one up left). We see that $M$ is the toroidal gluing of $A$ and $B$ along their intersection $T_K = A \cap B$. The following diagrams are commutative:

\begin{tikzpicture}
[description/.style={fill=white,inner sep=2pt}] 
\matrix(a)[matrix of math nodes, row sep=2em, column sep=1.5em, text height=1.5ex, text depth=0.25ex] 
{ & A\\ T_K & & M\\ & B\\}; 
\path[->](a-2-1) edge node[below]{$I_B$} (a-3-2); 
\path[->](a-2-1) edge node[above]{$I_A$} (a-1-2);  
\path[->](a-1-2) edge node[above]{$J_A$} (a-2-3); 
\path[->](a-3-2) edge node[below]{$J_B$} (a-2-3); 
\path[->](a-2-1) edge node[above]{$I$} (a-2-3);  

\begin{scope}[xshift=6cm,rotate=0,scale=1]
[description/.style={fill=white,inner sep=2pt}] 
\matrix(a)[matrix of math nodes, row sep=2em, column sep=2em, text height=1.5ex, text depth=0.25ex] 
{ & \pi_1(A)\\ \pi_1(T_K) & & \pi_1(M) = G_{L} & G\\ & \pi_1(B) &  \Z^{e+2} & \Z\\}; 
\path[->](a-2-1) edge node[below]{$i_B$} (a-3-2); 
\path[->](a-2-1) edge node[above]{$i_A$} (a-1-2);  
\path[->](a-1-2) edge node[above]{$j_A$} (a-2-3); 
\path[->](a-3-2) edge node[below]{$j_B$} (a-2-3); 
\path[->](a-2-1) edge node[above]{$i$} (a-2-3);
\path[->](a-2-3) edge node[above]{$\gamma$} (a-2-4);
\path[->](a-2-3) edge node[right]{$\alpha_{L}$} (a-3-3);
\path[->](a-3-3) edge node[below]{$(n_1,\ldots,n_{e+2})$} (a-3-4);
\path[->, dotted](a-2-4) edge  (a-3-4);
\end{scope}
\end{tikzpicture}

As in the previous section, let $T$ be the torus on which $K$ is drawn, and $\delta$ a simple closed curve that generates the fundamental group of $T \setminus V(K)$. The curve $\delta$ is once again locally parallel to the components of $T(ep,eq)$.

\begin{proposition} \label{prop_ep,eq_inside_thick}
Let $e \geqslant 2$. The $L^2$-Alexander torsion for the exterior of the  link 
 $L = T(ep,eq) \cup H_v \cup H_h$  is non-zero for all $t>0$ and for all admissible triples 
 $(G_L, (n_1,\ldots,n_{e},n_{e+1},n_{e+2}) \circ \alpha_L, \gamma)$ such that the homotopy class of the curve $\delta$ is sent by $\gamma$ to an element of infinite order. One has:
$$
 T^{(2)}_{L, (n_1,\ldots,n_{e},n_{e+1},n_{e+2})}(\gamma)(t)
 \ \dot{=} \  \max(1,t)^{e|pq(n_1+ \ldots +n_e) + p n_{e+1} + q n_{e+2}|}.
$$
\end{proposition}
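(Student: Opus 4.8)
The plan is to realise $M = M_L$ as a single toroidal gluing along $T_K = \partial V(K)$ of the two pieces already set up, namely $A = M_{T(p,q)\cup H_v \cup H_h}$ and $B = M_{T(e,epq)\cup H_K}$, and then to feed the torsions of $A$ and $B$ computed in Propositions \ref{prop_p,q_inside_thick} and \ref{prop_e,ek_inside_torus} into the toroidal gluing formula of Proposition \ref{prop t2 JSJ}. Since the curve $\delta$ lies on $T_K$ and generates an infinite cyclic subgroup there, the hypothesis that $\gamma(\delta)$ has infinite order guarantees that the restriction of $\gamma$ to $\pi_1(T_K)$ has infinite image, so Corollary \ref{torsion torus} makes the gluing torus contribute trivially. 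Provided the two pieces are weakly acyclic and of determinant class (which the cited propositions supply), this gives $T^{(2)}(M) \ \dot{=}\ T^{(2)}(A)\cdot T^{(2)}(B)$.

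The core of the argument is to express the restriction of $\phi = (n_1,\ldots,n_{e+2})\circ\alpha_L$ to $\pi_1(A)$ and $\pi_1(B)$ in the variables used by those propositions, by evaluating $\phi$ on the meridians generating $H_1(A)$ and $H_1(B)$. On the $A$ side the meridians of $H_v$ and $H_h$ are unchanged and carry $n_{e+1}$ and $n_{e+2}$, while the meridian $\mu_K$ of $K$ satisfies $\mathrm{lk}(\mu_K, L_j) = 1$ for each of the $e$ cable components $L_j$ (each crosses the meridian disk of $V(K)$ once) and $\mathrm{lk}(\mu_K, H_v) = \mathrm{lk}(\mu_K, H_h) = 0$, so that $\phi(\mu_K) = S$ where $S = n_1 + \cdots + n_e$. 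On the $B$ side the meridians of the components of $T(e,epq)$ are identified with the meridians of the $L_j$ and carry $n_1,\ldots,n_e$; the meridian of $H_K$, transported through the cabling homeomorphism on $T_K$, becomes the preferred longitude $\lambda_K$ of $K$. The decisive point is that $\mathrm{lk}(\lambda_K, L_j) = 0$, because $\lambda_K$ bounds a Seifert surface in $S^3 \setminus V(K)$, which is disjoint from the cable components lying inside $V(K)$; together with $\mathrm{lk}(\lambda_K, H_v) = \mathrm{lk}(K, H_v) = p$ and $\mathrm{lk}(\lambda_K, H_h) = \mathrm{lk}(K, H_h) = q$ (the values forced by the coefficients $p,q$ in Proposition \ref{prop_p,q_inside_thick}), this yields the value $p\,n_{e+1} + q\,n_{e+2}$ for the $H_K$-variable. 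I will also verify that the infinite-order hypotheses of the two propositions both reduce to the single assumption on $\delta$: the curve $\lambda\mu^{pq}$ of Proposition \ref{prop_e,ek_inside_torus} and the $\pi_1(T\setminus V(K))$-generator of Proposition \ref{prop_p,q_inside_thick} are each isotopic on $T_K$ to $\delta = \lambda_K + pq\,\mu_K$.

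Feeding these data into the cited formulas gives $T^{(2)}(A) \ \dot{=}\ \max(1,t)^{|pq\,S + p\,n_{e+1} + q\,n_{e+2}|}$ and, with $k = pq$, $T^{(2)}(B) \ \dot{=}\ \max(1,t)^{(e-1)|(p\,n_{e+1} + q\,n_{e+2}) + pq\,S|}$. Both exponents are absolute values of the same linear form $pq\,S + p\,n_{e+1} + q\,n_{e+2}$, so the product collapses to $\max(1,t)^{e|pq(n_1 + \cdots + n_e) + p\,n_{e+1} + q\,n_{e+2}|}$, as claimed. I expect the main obstacle to be precisely the linking-number bookkeeping of the previous paragraph: getting $\mathrm{lk}(\lambda_K, L_j) = 0$ rather than a naive surface-framing value, and correctly matching meridians with longitudes across the cabling homeomorphism on $T_K$. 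Once those identifications are pinned down, the gluing formula does the rest.
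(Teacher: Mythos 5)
Your proposal is correct and follows essentially the same route as the paper: the same decomposition of $M_L$ along $T_K=\partial V(K)$ into the exterior of $T(p,q)\cup H_v\cup H_h$ and the exterior of $T(e,epq)\cup H_K$, the same appeal to the toroidal gluing formula with the hypotheses reduced to $\gamma(\delta)$ having infinite order, and the same identification of the restricted coefficients $(n_1+\cdots+n_e,\,n_{e+1},\,n_{e+2})$ and $(n_1,\ldots,n_e,\,p n_{e+1}+q n_{e+2})$. Your linking-number justifications (in particular $\mathrm{lk}(\lambda_K,L_j)=0$ and $\delta=\lambda_K+pq\,\mu_K$ on $T_K$) simply make explicit the bookkeeping the paper states without comment.
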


\begin{proof}
Let $t>0$. Let $\phi$ denote the homomorphism $(n_1, \ldots, n_{e+2}) \circ \alpha_L$. We assume that the homotopy class of $\delta$ is sent by $\gamma$ to an element of infinite order.
First, as the cabling torus $T_K$ is the boundary of a tubular neighbourhood $V(K)$ of $K = T(p,q)$ and contains such a curve $\delta$, the torus $T_K$ has thus infinite image under $\gamma$, therefore 
$C^{(2)}_*(T_K,\phi \circ i, \gamma \circ i,t)$ is weakly acyclic and of determinant class and its $L^2$-torsion is $1$, by Theorem \ref{torsion torus}.

Secondly, the curve $\lambda \mu^k = \lambda \mu^{pq}$ of Proposition \ref{prop_e,ek_inside_torus} is ambient isotopic to $\delta$, thus it is sent by $\gamma$ to an element of infinite order (in Figure \ref{fig t(4,2)}, $\lambda$ is written $l_K$, and $\mu$ is parallel to $H_K$), therefore
$C^{(2)}_*(B,\phi \circ j_B, \gamma \circ j_B,t)$ is weakly acyclic and of determinant class, and
\begin{align*}
 T^{(2)}(B,\phi \circ j_B, \gamma \circ j_B)(t) 
& =  
 T^{(2)}\left (M_{L'},(n_1, \ldots, n_{e+2}) \circ \alpha_L \circ j_B, \gamma \circ j_B\right )(t)  \\
 & =
 T^{(2)}\left (M_{L'},(n_1, \ldots, n_e, p n_{e+1} + q n_{e+2}) \circ \alpha_{L'}, \gamma \circ j_B\right )(t)  \\
& \ \dot{=} \  \max(1,t)^{(e-1)|p n_{e+1} + q n_{e+2} + pq(n_1 + \ldots + n_e)|},
\end{align*}
where $L' = T(e,epq) \cup H_K$.

Finally, the last piece of the toroidal gluing is $A =M_{H} \setminus V(K)$, which corresponds to the case $e=1$ of the previous Section \ref{sec:tpqhh}; from the assumption on $\delta$, it follows from Proposition \ref{prop_p,q_inside_thick} that
$C^{(2)}_*(A,\phi \circ j_A, \gamma \circ j_A,t)$ is weakly acyclic and of determinant class, and
\begin{align*}
 T^{(2)}(A,\phi \circ j_A, \gamma \circ j_A)(t) 
& =  
 T^{(2)}\left (M_{L''},(n_1, \ldots, n_{e+2}) \circ \alpha_L \circ j_A, \gamma \circ j_A\right )(t)  \\
 & =
 T^{(2)}\left (M_{L''},(n_1+ \ldots+ n_e, n_{e+1}, n_{e+2}) \circ \alpha_{L''}, \gamma \circ j_A\right )(t)  \\
& \ \dot{=} \  \max(1,t)^{|p n_{e+1} + q n_{e+2} + pq(n_1 + \ldots + n_e)|},
\end{align*}
where $L''=T(p,q) \cup H_v \cup H_h$.
It follows then from Proposition \ref{prop t2 JSJ} that
$C^{(2)}_*(M_L,\phi, \gamma,t)$ is weakly acyclic and of determinant class, and
\begin{align*}
&T^{(2)}_{L, (n_1,\ldots,n_{e},n_{e+1},n_{e+2})}(\gamma)(t) \\
&= T^{(2)}(A,\phi \circ j_A, \gamma \circ j_A)(t) \cdot T^{(2)}(B,\phi \circ j_B, \gamma \circ j_B)(t) \\
& \ \dot{=}  \max(1,t)^{(e-1)|p n_{e+1} + q n_{e+2} + pq(n_1 + \ldots + n_e)|} \cdot \max(1,t)^{|p n_{e+1} + q n_{e+2} + pq(n_1 + \ldots + n_e)|} \\
& = \max(1,t)^{e |p n_{e+1} + q n_{e+2} + pq(n_1 + \ldots + n_e)|}.
\end{align*}
\end{proof}

\subsection{The link $T(ep,eq) \cup H_v$}
\label{sec:tepeqh}

The link $L = L_1 \cup \ldots \cup L_e \cup H_v =  T(ep,eq) \cup H_v$ is obtained from $L' = T(ep,eq) \cup H_v \cup H_h$ by deleting the component $H_h$, therefore $M_L$ is obtained from $M_{L'}$ by a $\infty$-surgery on the boundary component of $H_h$. This helps us compute the $L^2$-Alexander torsions of $L$. Let $\lambda_h$ be the homotopy class of $H_h$ in $M_L$ and $\delta$ the simple closed curve locally parallel to the strands of $T(ep,eq)$ as in the previous section. The epimorphism $Q: G_{L'} \to G_{L}$ corresponds to the trivialization of the curve $\lambda_h$.

\begin{proposition} \label{prop_ep,eq_inside}
For the exterior of the  link
 $L = T(ep,eq) \cup H_v$,  The $L^2$-Alexander torsion is non-zero for all admissible triples 
 $(G_L, (n_1,\ldots,n_{e+1}) \circ \alpha_L, \gamma)$ such that
$\gamma(\delta)$ and $\gamma(\lambda_h)$ have infinite order in $G$
 and for all $t>0$. One has:
$$ T^{(2)}_{L, (n_1,\ldots,n_{e+1})}(\gamma)(t)
= \max(1,t)^{(e|p|-1)|n_{e+1}+q(n_1+ \ldots +n_e)|}.$$
\end{proposition}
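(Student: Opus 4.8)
The plan is to realize $L = T(ep,eq) \cup H_v$ as the $\infty$-surgery (deletion of the component $H_h$) on $L' = T(ep,eq) \cup H_v \cup H_h$, and then to feed the torsion of $L'$ already computed in Proposition \ref{prop_ep,eq_inside_thick} into the Torres-type formula of Theorem \ref{thm surgery forget}. Concretely, I would apply Theorem \ref{thm surgery forget} with the ``big'' link taken to be $L'$, whose last component is $H_h$, and the ``small'' link taken to be $L$. Deleting $H_h$ induces the epimorphism $Q\colon G_{L'}\twoheadrightarrow G_L$ trivializing $\lambda_h$, and the relevant homomorphisms match as $\phi = (n_1,\ldots,n_{e+1})\circ\alpha_L$ and $\phi\circ Q = (n_1,\ldots,n_{e+1},0)\circ\alpha_{L'}$, so that the numerator of the surgery formula is precisely the torsion of $L'$ with its last index set to $0$.

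First I would check that the standing hypotheses of the present proposition are exactly those required by Theorem \ref{thm surgery forget}. The longitude condition in that theorem is the assumption that $\gamma(\lambda_h)$ has infinite order in $G$, since the core of the filling solid torus is homotopic to the preferred longitude of $H_h$. For the weak acyclicity and determinant-class requirement on the complex $C^{(2)}_*(M_{L'},(n_1,\ldots,n_{e+1},0)\circ\alpha_{L'},\gamma\circ Q)(t)$, I would invoke Proposition \ref{prop_ep,eq_inside_thick}, whose hypothesis that $\delta$ be sent to an element of infinite order translates through $Q$ into the assumption that $\gamma(\delta)$ has infinite order.

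Next I would substitute. By Proposition \ref{prop_ep,eq_inside_thick} with $n_{e+2}=0$,
$$T^{(2)}_{L',(n_1,\ldots,n_e,n_{e+1},0)}(\gamma\circ Q)(t)\ \dot{=}\ \max(1,t)^{e|pq(n_1+\ldots+n_e)+pn_{e+1}|}=\max(1,t)^{e|p|\,|q(n_1+\ldots+n_e)+n_{e+1}|}.$$
For the denominator I need the linking numbers of the components of $L$ with the deleted component $H_h$. Writing each component of $T(ep,eq)$ as a $T(p,q)$ curve on the Heegaard torus and recalling that $H_h$ is the core of the complementary solid torus, one gets $\mathrm{lk}(L_i,H_h)=q$ for $i=1,\ldots,e$, whereas $H_v\cup H_h$ is a Hopf link so $\mathrm{lk}(H_v,H_h)=1$. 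Hence the denominator exponent of Theorem \ref{thm surgery forget} is $|q(n_1+\ldots+n_e)+n_{e+1}|$, and dividing the two monomials yields
$$T^{(2)}_{L,(n_1,\ldots,n_{e+1})}(\gamma)(t)\ \dot{=}\ \max(1,t)^{(e|p|-1)|n_{e+1}+q(n_1+\ldots+n_e)|},$$
as claimed.

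The step that demands the most care is the linking-number bookkeeping. One must use the convention of Section \ref{Links with Seifert-fibered exterior} for $T(ep,eq)=\{z_1^{ep}=z_2^{eq}\}$ to be certain that it is $q$ (the winding of each $T(p,q)$ component around $H_h$), and not $p$, that appears as $\mathrm{lk}(L_i,H_h)$; interchanging the two would leave $|p(n_1+\ldots+n_e)+n_{e+1}|$ in the denominator and destroy the cancellation that factors the exponent through $|q(n_1+\ldots+n_e)+n_{e+1}|$. Once the linking numbers and the index of the deleted component are pinned down, the remainder is the routine division of the two $\max(1,t)$-monomials carried out above.
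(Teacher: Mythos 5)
Your proposal is correct and follows essentially the same route as the paper: apply Theorem \ref{thm surgery forget} to the deletion of $H_h$ from $T(ep,eq)\cup H_v\cup H_h$, feed in Proposition \ref{prop_ep,eq_inside_thick} with $n_{e+2}=0$ for the numerator, and use $\mathrm{lk}(L_i,H_h)=q$, $\mathrm{lk}(H_v,H_h)=1$ for the denominator. The linking-number bookkeeping and the factorization $e|pq(n_1+\cdots+n_e)+pn_{e+1}|=e|p|\,|q(n_1+\cdots+n_e)+n_{e+1}|$ match the paper's computation exactly.
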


\begin{proof}
We will use Theorem \ref{thm surgery forget}. Here $\lambda_h$ corresponds to the curve $\lambda$ in the assumptions of Theorem \ref{thm surgery forget}. Since
$\gamma(\delta)$ has infinite order in $G$, it follows from Proposition \ref{prop_ep,eq_inside_thick} that
$C^{(2)}_*(M_{L'},(n_1,\ldots,n_{e+1},0) \circ \alpha_{L'}, \gamma \circ Q,t)$ is weakly acyclic and of determinant class, and 
$$ T^{(2)}_{L',(n_1,\ldots,n_{e+1},0)}(\gamma \circ Q)(t)
\ \dot{=} \  \max(1,t)^{e|pq(n_1+ \ldots +n_e) + p n_{e+1}|}.$$
Since $\gamma(\lambda_h)$ has infinite order in $G$, then
$C^{(2)}_*(M_{L},(n_1,\ldots,n_{e+1}) \circ \alpha_{L}, \gamma,t)$ is weakly acyclic and of determinant class by  Theorem \ref{thm surgery forget}, and 
\begin{align*}
T^{(2)}_{L, (n_1,\ldots,n_{e+1})}(\gamma)(t) 
&\ \dot{=} \  \dfrac{T^{(2)}_{L',(n_1,\ldots,n_{e+1},0)}(\gamma \circ Q)(t)}{\max(1,t)^{|\mathrm{lk}(L_1,H_h) n_1 + \ldots + \mathrm{lk}(L_e,H_h) n_e + \mathrm{lk}(H_v,H_h) n_{e+1}|}} \\
&\ \dot{=} \  \dfrac{\max(1,t)^{e|pq(n_1+ \ldots +n_e) + p n_{e+1}|}}
{\max(1,t)^{|q n_1 + \ldots + q n_e +  n_{e+1}|}} \\
& = \max(1,t)^{(e|p|-1)|n_{e+1}+q(n_1+ \ldots +n_e)|}.
\end{align*} 
 \end{proof}

\subsection{The torus link $T(ep,eq)$}
\label{sec:tepeq}

Now we can compute $L^2$-Alexander torsions for general torus links of the form $L = T(ep,eq)$, where $e \geqslant 2$ is an integer and $p,q$ are relatively prime integers.
The link $T(ep,eq)$ is obtained by $\infty$-surgery from $T(ep,eq) \cup H_v$ on the component $H_v$. 
The epimorphism $Q: G_{T(ep,eq) \cup H_v} \to G_{T(ep,eq)}$ corresponds to the trivialization of the curve $\lambda_v$.
Let $\delta$ and $\lambda_h$ be as in the previous sections, and let $\lambda_v$ denote the homotopy class of $H_v$ in $G_{T(ep,eq)}$. Note that the fundamental group of the torus $T$ (on which $T(ep,eq)$ is drawn) is generated by classes of curves homotopic to $\lambda_h$ and $\lambda_v$. Thus the equality
$ \delta = \lambda_h^p \lambda_v^q$
stands in $G_{T(ep,eq)}$.
This equality and the fact  that $\lambda_h \lambda_v = \lambda_v \lambda_h$ imply that, for any homomorphism $\gamma: G_{T(ep,eq)} \to G$, if two elements of $\{ \gamma(\delta), \gamma(\lambda_h), \gamma(\lambda_v) \}$ are of infinite order, then the third is of infinite order as well.

\begin{proposition} \label{prop_ep,eq}
 The $L^2$-Alexander torsion for the exterior of the torus link 
 $L = T(ep,eq)$  is non-zero for all admissible triples 
 $(G_L, (n_1,\ldots,n_e) \circ \alpha_L, \gamma)$ such that two of the three elements
$\gamma(\delta), \gamma(\lambda_h), \gamma(\lambda_v)$
 have infinite order in $G$,
 and for all $t>0$. One has:
$$ T^{(2)}_{L, (n_1,\ldots,n_e)}(\gamma)(t)
\ \dot{=} \ \max(1,t)^{|n_1+ \ldots +n_e|\left (e|p||q| -|p| -|q|\right )}.$$
\end{proposition}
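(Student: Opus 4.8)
The plan is to realise $T(ep,eq)$ as the result of forgetting the component $H_v$ from the link $L^{+}=T(ep,eq)\cup H_v$ treated in Proposition \ref{prop_ep,eq_inside}, and to feed that computation into the Torres-type surgery formula of Theorem \ref{thm surgery forget}. Concretely, I would regard $L^{+}=T(ep,eq)\cup H_v$ as the $(e+1)$-component link whose \emph{last} component is $H_v$, so that deleting $H_v$ yields $T(ep,eq)$ and induces the forgetting epimorphism $Q\colon G_{T(ep,eq)\cup H_v}\twoheadrightarrow G_{T(ep,eq)}$. Applying Theorem \ref{thm surgery forget} with $c=e+1$, smaller link $T(ep,eq)$ and vector $(n_1,\ldots,n_e)$ gives
$$T^{(2)}_{T(ep,eq),(n_1,\ldots,n_e)}(\gamma)(t)\ \dot{=}\ \frac{T^{(2)}_{T(ep,eq)\cup H_v,(n_1,\ldots,n_e,0)}(\gamma\circ Q)(t)}{\max(1,t)^{\left|\sum_{i=1}^{e}\mathrm{lk}(L_i,H_v)\,n_i\right|}}.$$

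Next I would evaluate the two sides. For the numerator, Proposition \ref{prop_ep,eq_inside} applied to $L^{+}$ with the homomorphism $\gamma\circ Q$ and with $n_{e+1}=0$ gives $T^{(2)}_{T(ep,eq)\cup H_v,(n_1,\ldots,n_e,0)}(\gamma\circ Q)(t)\ \dot{=}\ \max(1,t)^{(e|p|-1)|q|\,|n_1+\ldots+n_e|}$. For the denominator I would record the linking numbers $\mathrm{lk}(L_i,H_v)=p$, which come from the same Heegaard-torus description used in Section \ref{sec:tepeqh}, where exchanging $H_v$ and $H_h$ swaps $p\leftrightarrow q$ and turns the already-used value $\mathrm{lk}(L_i,H_h)=q$ into $\mathrm{lk}(L_i,H_v)=p$. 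Thus $\sum_i\mathrm{lk}(L_i,H_v)n_i=p(n_1+\ldots+n_e)$ and the denominator is $\max(1,t)^{|p|\,|n_1+\ldots+n_e|}$. Dividing and collecting the exponents,
$$\left[(e|p|-1)|q|-|p|\right]|n_1+\ldots+n_e|=\left(e|p||q|-|p|-|q|\right)|n_1+\ldots+n_e|,$$
which is exactly the asserted formula.

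The remaining work is to check that all the hypotheses line up, and this bookkeeping is the only delicate point. Theorem \ref{thm surgery forget} requires $(\gamma\circ Q)$ of the preferred longitude of $H_v$ to have infinite order; since that longitude maps under $Q$ to the core class $\lambda_v=[H_v]$, the condition reads $\gamma(\lambda_v)$ of infinite order. Proposition \ref{prop_ep,eq_inside}, applied through $\gamma\circ Q$, requires in addition $(\gamma\circ Q)(\delta)=\gamma(\delta)$ and $(\gamma\circ Q)(\lambda_h)=\gamma(\lambda_h)$ to have infinite order. The argument therefore needs all three of $\gamma(\delta),\gamma(\lambda_h),\gamma(\lambda_v)$ to be of infinite order, whereas the statement only assumes that two of them are. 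This gap is closed precisely by the observation recorded just before the proposition: the relation $\delta=\lambda_h^{p}\lambda_v^{q}$ together with $\lambda_h\lambda_v=\lambda_v\lambda_h$ forces the third element to have infinite order as soon as two do. Granting this, the weak-acyclicity and determinant-class conclusions propagate through $Q$ at each stage, and the two displayed torsion identities combine to prove the proposition; the torsion computation itself then reduces to the one-line division above, while the main obstacle is the careful transport of the infinite-order conditions across $Q$ and the correct reading of $\lambda_v$ as the image of the longitude of the forgotten component.
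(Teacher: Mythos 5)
Your proposal is correct and follows exactly the paper's route: delete $H_v$ from $T(ep,eq)\cup H_v$ via the $\infty$-surgery formula of Theorem \ref{thm surgery forget}, feed in Proposition \ref{prop_ep,eq_inside} with $n_{e+1}=0$, use $\mathrm{lk}(L_i,H_v)=p$ in the denominator, and invoke the relation $\delta=\lambda_h^p\lambda_v^q$ to upgrade ``two of three have infinite order'' to all three. The paper's own proof is just a terser version of the same argument, and your exponent bookkeeping $(e|p|-1)|q|-|p|=e|p||q|-|p|-|q|$ is exactly the computation it leaves implicit.
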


This theorem generalises the computation of the $L^2$-Alexander invariants of torus knots done in \cite[Proposition 6.2]{DW}.

\begin{proof}
Here $\lambda_v$ corresponds to the curve $\lambda$ in the assumptions of Theorem \ref{thm surgery forget}. Since
$\gamma(\delta)$, $\gamma(\lambda_h)$ and $\gamma(\lambda_v)$ have infinite order in $G$, the result follows from Proposition \ref{prop_ep,eq_inside} and Theorem \ref{thm surgery forget} exactly like in the proof of Proposition \ref{prop_ep,eq_inside}.
\end{proof}

\subsection{General cabling formulas}
\label{sec:cabling}

We can now prove a general cabling formula for $L^2$-Alexander torsions as a consequence of Proposition \ref{prop_ep,eq_inside}.
Let $L = L_1 \cup \ldots \cup L_{c+1}$ a link in $S^3$, and $L' = L_1 \cup \ldots \cup L_{c} \cup L'_{c+1} \cup \ldots \cup L'_{c+e}$ the link obtained by cabling the component $L_{c+1}$ by the torus link $T(ep,eq)$ with $p,q$ two relatively prime integers.

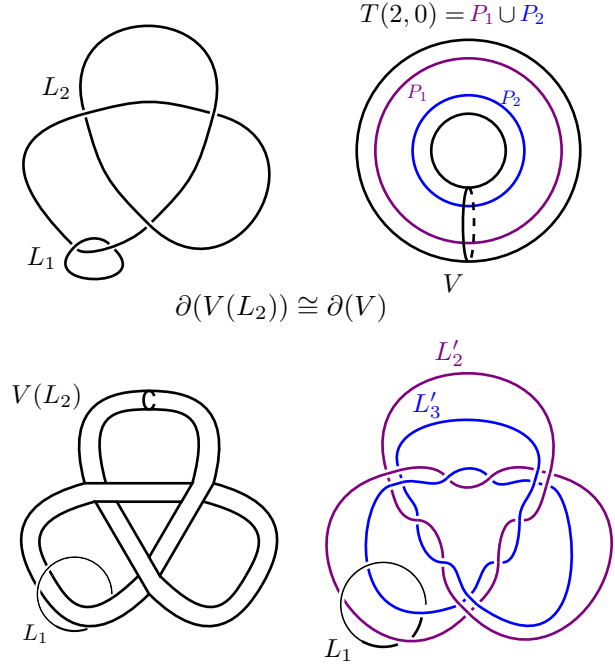
\begin{figure}[!h]
\centering
\begin{tikzpicture} [every path/.style={string } , every node/.style={transform shape , knot crossing , inner sep=1.7 pt } ] 
\begin{scope}[scale=0.7]
%C
\begin{scope}[xshift=-1cm,scale=0.7]
	\node[rotate=150] (tl) at (-1.73,1) {};
	\node[rotate=30] (tr) at (1.73, 1 ) { } ;
	\node[rotate=0] (b) at (0 ,-2) { } ;
	\coordinate (n) at (-2,1.2) ;
	\draw[scale=2] (n) node[above left]{$L_2$} ;
	\draw[scale=2] (-1.2,-1.7) node[above left]{$L_1$} ;	
	
	\node[rotate=0] (a) at (-1 ,-2.6) { } ;
	\node[rotate=0] (c) at (-2 ,-2.6) { } ;
\draw (tl.center) .. controls (tl.16 south west) and (tr.16 north west) .. (tr) ;
\draw (tl) .. controls (tl.32 south east) and (tr.32 north east) .. (tr.center) ;
\draw (tl) .. controls (tl.16 north west) and (b.16 north west) .. (b.center) ;
\draw (tl.center) .. controls (tl.16 north east) and (c.32 north west) .. (c) ;
\draw (b) .. controls (b.8 south west) and (c.4 south east) .. (c) ;
\draw (b) .. controls (b.16 north east) and (tr.16 south west) .. (tr.center) ;
\draw (b.center) .. controls (b.32 south east) and (tr.32 south east) .. (tr) ;
\draw (a) .. controls (a.16 south east) and (c.16 south west) .. (c.center) ;
\draw (c.center) .. controls (c.4 north east) and (a.4 north west) .. (a) ;
\end{scope}
%P
\begin{scope}[xshift=5cm,scale=0.7]
	\coordinate (n) at (-1,1.2) ;
	\coordinate (m) at (-2.6,1.5) ;
	\draw[scale=2] (0,-2) node[above left]{$V$} ;
	\coordinate (O) at (0,0) ;
	\coordinate (A) at (0.75,1.3) ;
	\coordinate (B) at (1.25,2.17) ;
	\coordinate (C) at (-0.75,1.3) ;
	\coordinate (D) at (-1.25,2.17) ;	
	\coordinate (G) at (0,-1) ;
	\coordinate (H) at (0,-3) ;
	\coordinate (I) at (-0.15,2.1) ;
	\draw (O) circle (1) ;
	\draw (O) circle (3) ;	
	\draw[color=blue] (O) circle (1.5) ;
	\draw [color=violet](O) circle (2.5) ;	
	\draw (G) ..controls +(-0.2,0) and +(-0.2,0).. (H) ;
	\draw [dashed] (G) ..controls +(0.2,0) and +(0.2,0).. (H) ;
		\draw[scale=1.5] (n) node[violet, above left]{$P_1$} ;
				\draw[scale=1.5] (1,0.7) node[blue, above left]{$P_2$} ;
		\draw[scale=2] (0.7,0.9) node[above left]{$T(2,0) = \ \ \ \cup \ \ \ $} ;
		\draw[scale=1.8] (0.45,1.8) node[violet, above left]{$P_1$} ;
				\draw[scale=1.8] (1.2,1.8) node[blue, above left]{$P_2$} ;
\end{scope}
\draw[scale=1.5] (2,-3) node[above left]{$ \partial (V(L_2)) \cong \partial (V) $} ;
%TC
\begin{scope}[xshift=-1cm,yshift=-7cm,scale=0.7]
	\coordinate (n) at (-1.9,2.5) ;
	\draw[scale=2] (n) node[above left]{$ V(L_2) $} ;
	\coordinate (O) at (0,0) ;
	\coordinate (A) at (-0.87,0.5) ;
	\coordinate (B) at (-1.73,1) ;
	\coordinate (C) at (-1.16,1) ;
	\coordinate (D) at (0.87,0.5) ;
	\coordinate (E) at (1.73,1) ;
	\coordinate (F) at (1.16,1) ;
	\coordinate (G) at (0,3) ;
	\coordinate (H) at (0,3.5) ;
	\draw (A) -- (D) -- (F) -- (B) ;
	\draw (F) ..controls +(0.5*1,0.5*3.73) and +(1,0).. (G) ;
	\draw (C) ..controls +(-0.5*1,0.5*3.73) and +(-1,0).. (G) ;
	\draw (E) ..controls +(0.6*1,0.6*3.73) and +(1,0).. (H) ;
	\draw (B) ..controls +(-0.6*1,0.6*3.73) and +(-1,0).. (H) ;
	\draw (G) ..controls +(-0.2,0) and +(-0.2,0).. (H) ;
	\draw [dashed] (G) ..controls +(0.2,0) and +(0.2,0).. (H) ;
\begin{scope}[rotate=120]
	\coordinate (O) at (0,0) ;
	\coordinate (A) at (-0.87,0.5) ;
	\coordinate (B) at (-1.73,1) ;
	\coordinate (C) at (-1.16,1) ;
	\coordinate (D) at (0.87,0.5) ;
	\coordinate (E) at (1.73,1) ;
	\coordinate (F) at (1.16,1) ;	
	\coordinate (G) at (0,3) ;
	\coordinate (H) at (0,3.5) ;
	\draw (A) -- (D) -- (F) -- (B) ;
	\draw (F) ..controls +(0.5*1,0.5*3.73) and +(1,0).. (G) ;
	\draw (C) ..controls +(-0.5*1,0.5*3.73) and +(-1,0).. (G) ;
	\draw (E) ..controls +(0.6*1,0.6*3.73) and +(1,0).. (H) ;
	\draw (B) ..controls +(-0.6*1,0.6*3.73) and +(-1,0).. (H) ;
\end{scope}
\begin{scope}[rotate=240]
	\coordinate (O) at (0,0) ;
	\coordinate (A) at (-0.87,0.5) ;
	\coordinate (B) at (-1.73,1) ;
	\coordinate (C) at (-1.16,1) ;
	\coordinate (D) at (0.87,0.5) ;
	\coordinate (E) at (1.73,1) ;
	\coordinate (F) at (1.16,1) ;	
	\coordinate (G) at (0,3) ;
	\coordinate (H) at (0,3.5) ;
	\draw (A) -- (D) -- (F) -- (B) ;
	\draw (F) ..controls +(0.5*1,0.5*3.73) and +(1,0).. (G) ;
	\draw (C) ..controls +(-0.5*1,0.5*3.73) and +(-1,0).. (G) ;
	\draw (E) ..controls +(0.6*1,0.6*3.73) and +(1,0).. (H) ;
	\draw (B) ..controls +(-0.6*1,0.6*3.73) and +(-1,0).. (H) ;
\end{scope}
\draw[color=black,  thick] (-1,-2) arc (0:290:1);
\draw[color=white,  thick] (-0.95,-2) arc (0:240:1.05);
\draw[color=white,  thick] (-1.05,-2) arc (0:240:0.95);
	\draw[scale=1.75] (-1.5,-2) node[black, above left]{$L_1$} ;
\end{scope}
%SCP
\begin{scope}[xshift=5cm,yshift=-7cm,scale=0.8,color=blue]
	\coordinate (n) at (-1,2) ;
	\node[rotate=150] (g1) at (-1.63,1.18) {};
	\node[rotate=150] (g2) at (-1.91,1.1) {};
	\node[rotate=150] (g3) at (-1.83,0.82) {};
	\node[rotate=150] (g4) at (-1.55,0.9) {};
	\node[rotate=30] (d1) at (1.63, 1.18 ) { } ;
	\node[rotate=30] (d2) at (1.55, 0.9 ) { } ;	
	\node[rotate=30] (d3) at (1.83, 0.82 ) { } ;
	\node[rotate=30] (d4) at (1.91, 1.1 ) { } ;
	\node[rotate=0] (b1) at (0 ,-1.8) { } ;
	\node[rotate=0] (b2) at (-0.2 ,-2) { } ;	
	\node[rotate=0] (b3) at (0 ,-2.2) { } ;
	\node[rotate=0] (b4) at (0.2 ,-2) { } ;
	\node[rotate=0] (h) at (0 ,2.5) { } ;	
\draw (g1) .. controls (g1.16 south east) and (d1.16 north east) .. (d1.center) ;
\draw [color=violet](g2) .. controls (g2.32 south east) and (d4.32 north east) .. (d4.center) ;
\draw [color=violet](g2.center) .. controls (g2.32 north east) and (b3.32 south west) .. (b3) ;
\draw (g3.center) .. controls (g3.8 north east) and (-3,-3) .. (b2) ;
\draw [color=violet](g2.center) -- (g1.center) ;
\draw (g3.center) -- (g4.center) ;
\draw (g1)-- (g4);
\draw [color=violet](g2) -- (g3) ;
\draw [color=violet](b3.center) .. controls (b3.32 south east) and (d4.32 south east) .. (d4) ;
\draw (b4.center) .. controls (3,-4) and (d3.8 south east) .. (d3) ;
\draw (d2.center) -- (d1.center) ;
\draw [color=violet](d3.center) -- (d4.center) ;
\draw [color=violet](d1)-- (d4);
\draw (d2) -- (d3) ;
\draw (b1.center) -- (b4.center) ;
\draw [color=violet](b2.center) -- (b3.center) ;
\draw (b1)-- (b2);
\draw [color=violet](b3) -- (b4) ;
	\node (tm1) at (-0.5,1) {};
	\node (tm2) at (0.5,1) {};
\draw [color=violet](g1.center) .. controls (g1.4 south west) and (tm1.4 north west) .. (tm1) ;
\draw [color=violet](tm1) .. controls (tm1.4 south east) and (tm2.4 south west) .. (tm2.center) ;
\draw [color=violet](tm2.center) .. controls (tm2.4 north east) and (d1.4 north west) .. (d1) ;
\draw (g4.center) .. controls (g4.4 south west) and (tm1.4 south west) .. (tm1.center) ;
\draw (tm1.center) .. controls (tm1.4 north east) and (tm2.4 north west) .. (tm2) ;
\draw (tm2) .. controls (tm2.4 south east) and (d2.4 north west) .. (d2) ;
	\node (tg1) at (-1.2,0) {};
	\node (tg2) at (-0.6,-1) {};
\draw (g4) .. controls (g4.4 north west) and (tg1.4 north) .. (tg1) ;
\draw (tg1) .. controls (tg1.4 south) and (tg2.4 west) .. (tg2.center) ;
\draw (tg2.center) .. controls (tg2.4 east) and (b1.4 north west) .. (b1.center) ;
\draw [color=violet](g3) .. controls (g3.4 north west) and (tg1.4 west) .. (tg1.center) ;
\draw [color=violet](tg1.center) .. controls (tg1.4 east) and (tg2.4 north) .. (tg2) ;
\draw [color=violet](tg2) .. controls (tg2.4 south) and (b2.4 north west) .. (b2.center) ;
	\node (td1) at (1.2,0) {};
	\node (td2) at (0.7,-1) {};
\draw [color=violet](d3.center) .. controls (d3.4 south west) and (td1.4 east) .. (td1) ;
\draw [color=violet](td1) .. controls (td1.4 west) and (td2.4 north) .. (td2.center) ;
\draw [color=violet](td2.center) .. controls (td2.4 south) and (b4.4 north east) .. (b4) ;
\draw (d2.center) .. controls (d2.4 south west) and (td1.4 north) .. (td1.center) ;
\draw (td1.center) .. controls (td1.4 south) and (td2.4 east) .. (td2) ;
\draw (td2) .. controls (td2.4 west) and (b1.4 north east) .. (b1) ;
\draw[color=black] (-1,-2) arc (0:290:1);
\draw[color=white] (-0.95,-2) arc (0:240:1.05);
\draw[color=white] (-1.05,-2) arc (0:240:0.95);
\draw[color=black] (-1.1,-2.3) arc (-10:-40:1) ;
	\draw[scale=1.75] (-1.5,-2) node[black, above left]{$L_1$} ;
	\draw[scale=1.75] (0,2) node[violet, above left]{$L'_2$} ;
		\draw[scale=1.75] (-0.3,1.3) node[blue, above left]{$L'_3$} ;
\end{scope}
\end{scope}
\end{tikzpicture}
\caption{The $(2,0)$-cabling on the second component of $L = L_1 \cup L_2$} \label{fig cable 2,0}
\end{figure}

Then $M = M_{L'} = S^3 \setminus V(L')$ is obtained by a toroidal gluing of $ A = M_L = S^3 \setminus V(L)$ and 
$B =(S^1 \times D^2) \setminus V(T(ep,eq)) \cong S^3 \setminus V(T(ep,eq)\cup H_v)$
 between the components $L_{c+1}$ and $H_{v}$. 
 Let $n_1, \ldots, n_{c+e} \in \Z$. Let $\gamma: G_{L'} \to G$ be a group homomorphism such that $(G_{L'}, (n_1, \ldots, n_{c+e}) \circ \alpha_{L'}, \gamma)$ is an admissible triple. Let $t>0$.

\begin{tikzpicture}
[description/.style={fill=white,inner sep=2pt}] 
\matrix(a)[matrix of math nodes, row sep=2em, column sep=1.5em, text height=1.5ex, text depth=0.25ex] 
{ & A\\ T & & M\\ & B\\}; 
\path[->](a-2-1) edge node[below]{$I_B$} (a-3-2); 
\path[->](a-2-1) edge node[above]{$I_A$} (a-1-2);  
\path[->](a-1-2) edge node[above]{$J_A$} (a-2-3); 
\path[->](a-3-2) edge node[below]{$J_B$} (a-2-3); 
\path[->](a-2-1) edge node[above]{$I$} (a-2-3);  

\begin{scope}[xshift=6cm,rotate=0,scale=1]
[description/.style={fill=white,inner sep=2pt}] 
\matrix(a)[matrix of math nodes, row sep=2em, column sep=1.5em, text height=1.5ex, text depth=0.25ex] 
{ & \pi_1(A) = G_L\\ \pi_1(T) & & \pi_1(M) = G_{L'} & G\\ & \pi_1(B)  & \Z^{e+1} & \Z\\}; 
\path[->](a-2-1) edge node[below]{$i_B$} (a-3-2); 
\path[->](a-2-1) edge node[above]{$i_A$} (a-1-2);  
\path[->](a-1-2) edge node[above]{$j_A$} (a-2-3); 
\path[->](a-3-2) edge node[below]{$j_B$} (a-2-3); 
\path[->](a-2-1) edge node[above]{$i$} (a-2-3);
\path[->](a-2-3) edge node[above]{$\gamma$} (a-2-4);
\path[->](a-2-3) edge node[left]{$\alpha_{L'}$} (a-3-3);
\path[->](a-3-3) edge node[below]{$(n_1,\ldots,n_{c+e})$} (a-3-4);
\path[->, dotted](a-2-4) edge  (a-3-4);
\end{scope}
\end{tikzpicture}

Let 
$N = n_{c+1} + \ldots + n_{c+e}$
and
$\ell = \sum_{i=1}^c \mathrm{lk}(L_i,L_{c+1}) n_i.$
To clarify the notations, let us look a the example in Figure \ref{fig cable 2,0}.
The link $L$ has two components ($c=1$), $L_1$ which is unknotted and $L_2$ which is a trefoil, with linking number $\mathrm{lk}(L_1,L_2)=1$. We do a $(2,0)$-cabling on $L_2$ (thus $e=2, p=1, q=0$), and the resulting link $L'$ has $3$ components. We glue the tori $\partial( V(L_2))$ and $\partial (V)$ such that a meridian of $L_2$ is identified with $m_V$ the meridian of $V$ that circles both components of $T(2,0)$.
Here $N= n_2+n_3$ and $\ell= n_1$.

\begin{theorem} \label{thm_torsion_cabling}
Assume that 
\begin{itemize}
\item $C^{(2)}_*(M_L, (n_1, \ldots,n_c, p N) \circ \alpha_L, \gamma \circ j_A,t)$ is weakly acyclic and of determinant class,
\item $C^{(2)}_*(M_{T(ep,eq)\cup H_v},(n_{c+1}, \ldots, n_{c+e},\ell) \circ \alpha_{T(ep,eq)\cup H_v}, \gamma \circ j_B,t)$ is weakly acyclic and of determinant class,
\item $T = \partial(V) \cong \partial(V(L_{c+1}))$ has infinite image under $\gamma$,
\end{itemize}
then $C^{(2)}_*(M_{L'}, (n_1, \ldots, n_{c+e}) \circ \alpha_{L'}, \gamma,t)$ is weakly acyclic and of determinant class and
$$
T^{(2)}_{L', (n_1, \ldots, n_{c+e})}(\gamma)(t) 
\ \dot{=} \ 
T^{(2)}_{L, (n_1, \ldots,n_c, p N)}(\gamma \circ j_A)(t) \cdot
\max(1,t)^{(e|p|-1)|\ell+q N|}. 
$$
\end{theorem}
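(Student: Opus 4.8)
The plan is to realise $M_{L'}$ as a toroidal gluing of the two pieces $A = M_L$ and $B \cong M_{T(ep,eq) \cup H_v}$ along the torus $T = \partial V(L_{c+1})$, and then to invoke the toroidal gluing formula of Proposition \ref{prop t2 JSJ} together with the explicit value of the torsion of $T(ep,eq) \cup H_v$ obtained in Proposition \ref{prop_ep,eq_inside}. First I would record, exactly as in the diagram preceding the statement, that $G_{L'}$ decomposes via the injections $j_A, j_B$, and that by hypothesis $T$ has infinite image under $\gamma$; hence by Corollary \ref{torsion torus} the gluing torus is weakly acyclic, of determinant class, and of torsion $1$.

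The heart of the argument is the identification of the two pulled-back classes $\phi \circ j_A$ and $\phi \circ j_B$, where $\phi = (n_1, \ldots, n_{c+e}) \circ \alpha_{L'}$, obtained by evaluating on the relevant meridians and reading the coefficients off as linking numbers in $S^3$. For the piece $A$, the meridians of $L_1, \ldots, L_c$ are unchanged and carry the weights $n_1, \ldots, n_c$, while the meridian $\mu_{c+1}$ of $L_{c+1}$ bounds a meridian disk of $V(L_{c+1})$ meeting each of the $e$ strands of $T(ep,eq)$ exactly $p$ times, so $\phi(\mu_{c+1}) = p(n_{c+1} + \ldots + n_{c+e}) = pN$ and $\phi \circ j_A = (n_1, \ldots, n_c, pN) \circ \alpha_L$. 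For the piece $B$, the strand meridians carry $n_{c+1}, \ldots, n_{c+e}$, whereas the meridian of $H_v$ is a preferred longitude of $L_{c+1}$: it links each $L_i$ ($i \leqslant c$) with $\mathrm{lk}(L_{c+1},L_i)$ and bounds a disk disjoint from the strands, so it links none of them. This gives $\phi(\mu_{H_v}) = \sum_{i=1}^c \mathrm{lk}(L_i,L_{c+1}) n_i = \ell$ and $\phi \circ j_B = (n_{c+1}, \ldots, n_{c+e}, \ell) \circ \alpha_{T(ep,eq) \cup H_v}$.

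With the weights in hand, Proposition \ref{prop_ep,eq_inside} yields
$$T^{(2)}(B,\phi \circ j_B, \gamma \circ j_B)(t) \ \dot{=} \ \max(1,t)^{(e|p|-1)|\ell + qN|},$$
together with its weak acyclicity and determinant class; feeding both pieces into Proposition \ref{prop t2 JSJ} (the gluing torus contributing $1$) then produces simultaneously the asserted properties of $C^{(2)}_*(M_{L'}, \phi, \gamma, t)$ and the product formula. The main obstacle I expect is the bookkeeping of the linking numbers identifying the two restricted classes — in particular justifying the factor $p$ in the weight $pN$ and the vanishing of the strand contributions to $\ell$ — since any slip there propagates directly into the final exponent. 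A secondary technical point is to verify that $T$ having infinite image under $\gamma$ is enough to trigger Proposition \ref{prop_ep,eq_inside}, i.e. that the curves $\delta$ and $\lambda_h$ (which are isotopic into $T$ as fibre and section of the Seifert piece $B$) acquire infinite order under $\gamma$.
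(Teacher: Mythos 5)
Your proposal is correct and follows essentially the same route as the paper: the same decomposition of $M_{L'}$ into $A = M_L$ and $B \cong M_{T(ep,eq)\cup H_v}$ glued along $T = \partial(V(L_{c+1}))$, the same linking-number bookkeeping identifying $\phi \circ j_A$ with $(n_1,\ldots,n_c,pN)\circ\alpha_L$ and $\phi \circ j_B$ with $(n_{c+1},\ldots,n_{c+e},\ell)\circ\alpha_{T(ep,eq)\cup H_v}$, and the same final appeal to Propositions \ref{prop t2 JSJ} and \ref{prop_ep,eq_inside}. The two points you flag as delicate (the factor $p$ in $pN$, and the vanishing of the strand contributions to $\ell$ via $j_B(\lambda) = j_A(l_{c+1})$) are exactly the computations the paper carries out explicitly.
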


This theorem generalises the cabling formula for the $L^2$-Alexander invariant of knots proven in \cite[Theorem 4.3]{BA13}.

\begin{proof}
First let us prove that
$$ (n_1, \ldots, n_{c+e}) \circ \alpha_{L'} \circ j_A
= (n_1, \ldots,n_c, p N) \circ \alpha_L
$$
and that $$ (n_1, \ldots, n_{c+e}) \circ \alpha_{L'} \circ j_B
= (n_{c+1}, \ldots, n_{c+e},\ell) \circ \alpha_{T(ep,eq)\cup H_v}.
$$
The group $G_L = \pi_1(A)$ is generated by $m_1, \ldots m_{c+1}$, which are preferred meridians of $L_1, \ldots L_{c+1}$ in $M_L$. One has
$$ ((n_1, \ldots, n_{c+e}) \circ \alpha_{L'} \circ j_A)(m_i)
= ((n_1, \ldots,n_c, p N) \circ \alpha_L)(m_i)
$$ for $i=1, \ldots, c$ since
$L_1, \ldots, L_c$ are the $c$ first components of $L'$. The identity remains true for $i=c+1$, since 
$j_A(m_{c+1})$ circles the $e$ components $L'_{c+1}, \ldots, L'_{c+e}$ for a total of $p$ times and is unlinked with $L_1, \ldots, L_c$.
The group $G_B = \pi_1(B) = \pi_1(S^1 \times D^2 \setminus V(T(ep,eq)))$ is generated by $b_1, \ldots, b_e$ (which are preferred meridians of the components of $T(ep,eq)$) and $\lambda$ a longitude of the solid torus $S^1 \times D^2 \cong \partial V(L_{c+1})$. Note that $j_B(\lambda) = j_A(l_{c+1})$ in $M_{L'}$ where $l_{c+1}$ is a preferred longitude of 
$L_{c+1}$ in $M_L$. The identity
$$ (n_1, \ldots, n_{c+e}) \circ \alpha_{L'} \circ j_B
= (n_{c+1}, \ldots, n_{c+e},\ell) \circ \alpha_{T(ep,eq)\cup H_v}
$$ stands true when evaluated on each of the generators $b_i$, for $i=1 \ldots,e$ (both terms of the equality are immediately equal to $n_{c+i}$), and for $\lambda$ the second term is equal to $\ell$, and the first term is equal to
\begin{align*}
((n_1, \ldots, n_{c+e}) \circ \alpha_{L'})( j_B(\lambda))
&=((n_1, \ldots, n_{c+e}) \circ \alpha_{L'})( j_A(l_{c+1})) \\
&= ((n_1, \ldots,n_c, p N) \circ \alpha_L)(l_{c+1}) \\
&= n_1 \mathrm{lk}(L_1,L_{c+1})  + \ldots + n_c \mathrm{lk}(L_c,L_{c+1}) + 0
= \ell.
\end{align*}
We have established that the three different coefficents $\phi$ of the statement of the result were indeed compatible. Now, 
since the cabling torus $T = \partial(V(L_{c+1}))$ has infinite image under $\gamma$, the result follows from Proposition \ref{prop t2 JSJ} and Proposition \ref{prop_ep,eq_inside}.
 \end{proof}

\bibliographystyle{plain}

\bibliography{bibliothese}

\end{document}